\DeclareMathOperator*{\esssup}{ess\,sup}
\numberwithin{equation}{section}
\newcommand{\be}{\begin{equation}}
\newcommand{\ee}{\end{equation}}
\newcommand{\beqa}{\begin{eqnarray}}
\newcommand{\eeqa}{\end{eqnarray}}
\newcommand{\beq}{\begin{eqnarray*}}
	\newcommand{\eeq}{\end{eqnarray*}}
\newcommand{\ba}{\begin{aligned}}
	\newcommand{\ea}{\end{aligned}}
\def \ed {\end{document}}
\newtheorem{definition}{Definition}[section]
\newtheorem{theorem}{Theorem}[section]
\newtheorem{proposition}{Proposition}[section]
\newtheorem{lemma}{Lemma}[section]
\newtheorem{corollary}{Corollary}[section]
\def \esssup {\mbox{ess sup}}
\def \R{\mathbb{R}}
\def \P{\mathbb{P}}
\def \E{\mathbb{E}}
\def \Sm{\mathbb{S}}
\def \F{\mathcal{F}}
\def \K{\mathcal{K}}
\def \M{\mathcal{M}}
\def \S{\mathcal{S}}
\def \I{\mathcal{I}}
\def \L{\mathcal{L}}
\def \D{\mathcal{D}}
\def \Y{\mathcal{Y}}
\def\cL{{\cal L}}
\def \bf{\textbf}
\def \it{\textit}
\def \ms {\medskip}
\def \t {\tau}
\def \nn{\nonumber}
\begin{document}
% % % % % % % % % % % % % % % % % % % % % % % % % % % % % % % % % % % % % % % % % % % % % % % % % % % % % % % % % % % % % % % % % % % % % % % % % % % % % % % % %

\title{Stochastic optimal switching and systems of variational inequalities with interconnected obstacles
%: the probabilistic approach
}
% % % % % % % % % % % % % % % % % % % % % % % % % % % % % % % % % % % % % % % % % % % % % % % % % % % % % % % % % % % % % % % % % % % % % % % % % % % % % % % % %

\author{ Brahim EL ASRI \thanks{Universit\'e Ibn Zohr, Labo. LISAD, Equipe. Aide \`a la decision,
		ENSA, B.P.  1136, Agadir, Maroc. E-mail: b.elasri@uiz.ac.ma } ,\,\,\,  Imade FAKHOURI \thanks{Universit\'e Ibn Zohr, Labo. LISAD, Equipe. Aide \`a la decision,
		ENSA, B.P.  1136, Agadir, Maroc. E-mail: i.fakhouri@uiz.ac.ma } \,\,\, and \, Nacer OURKIYA \thanks{Universit\'e Ibn Zohr, Labo. LISAD, Equipe. Aide \`a la decision,
		ENSA, B.P.  1136, Agadir, Maroc. E-mail: nacer.ourkiya@edu.uiz.ac.ma.}}
\date{}
\maketitle

\begin{abstract}
	This paper studies a  system of $m$ variational inequalities with interconnected obstacles in infinite horizon associated to optimal multi-modes switching problems. Our main result is the existence and uniqueness of a continuous solution in viscosity sense, for that system. The proof of the main result strongly relies on the connection between the systems of variational inequalities and reflected backward stochastic differential equations (RBSDEs) with oblique reflection, which will be characterized through a Feynman-Kac's formula. The main feature of our system of infinite horizon RBSDEs is that its components are interconnected through both the generators and the obstacles.
\end{abstract}

\noindent {\textbf{Keywords:}} Reflected backward stochastic differential equations, Switching problem, Real options, Oblique reflection, Viscosity solution, Variational inequalities, Infinite horizon.
\\

\noindent {\textbf{Mathematics Subject Classification (2020):}} 91G80, 60H30, 93C30, 35D40.

\medskip
% % % % % % % % % % % % % % % % % % % % % % % % % % % % % % % % % % % % % % % % % % % % % % % % % % % % % % % % % % % % % % % % % % % % % % % % % % % % % % % % %
\section{Introduction}
In this paper we study the system: for $i\in\I:=\{1,\dots,m\}$ and $x\in\R^k$,
\begin{equation}\label{ourPDE}
\begin{aligned}
\min\biggr\{v^i(x)- &\max\limits_{j\in{\I}^{-i}}\{-g_{ij}(x)+v^j(x)\},rv^i(x)-\cL v^i(x)\\ &-f_i\big(x,v^1(x),...,v^m(x),\sigma^\top(x).D_xv^i(x)\big)\biggl\}=0,
\end{aligned}
\end{equation}
where $r$ is a positive discount factor,  $\I^{-i}:=\I-\{i\}$. $g_{ij}$ and $f_i$ are deterministic continuous functions. $\cL$ is an infinitesimal generator of the form
\be
\label{derivgen} \cL=\frac{1}{2}\sum_{i,j=1}^{m}(\sigma
\sigma^{\top})_{ij}(x)\frac{\partial^{2}}{\partial x_{i} \partial
	x_{j}}+\sum_{i=1}^{m}b_{i}(x) \frac{\partial }{\partial x_{i}},
\ee
where $b_i$ and $\sigma_{ij}$ are continuous functions. Operators $\cL$
occur, for instance, in the context of financial markets where the state variables are
described by an k-dimensional stochastic process $X:=(X^x_t)_{t\geq0}$ solving the system of
stochastic differential equations:
%$(X^x_t)_{t\geq0}$ which is the solution of the following standard stochastic differential equation (SDE):
\be\label{SDE}
dX_t^x=b(X^x_t)dt+\sigma(X^x_t)dB_t, \quad X^x_0=x, x\in\R^k, t\geq 0.
\ee
Here $B:=(B_t)_{t\geq0}$ is a $d$-dimensional Brownian motion. The process $X:=(X^x_t)_{t\geq0}$ can, for instance, be the electricity price, functionals of the electricity price, or other factors which determine the price. $\cL$ is, in the context of \eqref{SDE}, the infinitesimal
generator associated with $X:=(X^x_t)_{t\geq0}$. \\
In the Markovian setting when the randomness stems from the Ito diffusion $X:=(X^x_t)_{t\geq0}$ in \eqref{SDE}, if for $i\in\I$, $f_i$ does not depend on $(y_i)_{i\in\I}$ and $(z_i)_{i\in\I}$, the problem in \eqref{ourPDE} is a system of $m$-variational inequalities with interconnected obstacles occuring in the context of infinite horizon multi-modes optimal switching problems (OSP). Our main results concern the existence and uniqueness of a solution $(v^1,\dots,v^m): x\in \R^k\mapsto (v^1(x),\dots,v^m(x))\in \R^{m} $ in viscosity sense to the system \eqref{ourPDE}.\\
The system \eqref{ourPDE} is connected with the following system of RBSDEs with oblique reflection:
for $i\in\I$ and $t\geq0$, $\forall r\in\R^+$:
\be\label{RBSDE-Markov-framework}
\begin{cases}
	\begin{aligned}
		e^{-r t}Y^{i,x}_t = \int_{t}^{+\infty} &e^{-r s}f_i(X^{x}_s,
		Y^{1,x}_s,\dots,Y^{m,x}_s,Z^{i,x}_s)ds+ K^{i,x}_\infty-K^{i,x}_t\\ &-  \int_{t}^{+\infty}e^{-r s}Z^{i,x}_sdB_s;
	\end{aligned}\\
	\lim\limits_{t \to+\infty} e^{-r t}Y^{i,x}_t=0,\\
	\forall t\geq0, \quad e^{-r t}Y^{i,x}_t \geq e^{-r t} \underset{j\in\I^{-i}}{\max}(Y^{j,x}_t-g_{ij}(X^{x}_t)),\\
	\int_{0}^{+\infty} e^{-r s}\{Y^{i,x}_s - \underset{j\in\I^{-i}}{\max}(Y^{j,x}_s-g_{ij}(X^{x}_s))\}dK^{i,x}_s = 0.
\end{cases}
\ee
The main feature of this system, is that its components $(Y^{1,x},\ldots,Y^{m,x})$ are interconnected through both the barriers and the generators. Actually, the dependence of $f_i$ on the whole vector $(Y^{1,x},\ldots,Y^{m,x})$ can be interpreted as a nonzero-sum game problem, where the players' utilities affect each other.\\
Note that, in a non Markovian framework this system has already been studied in \cite{EO}.

Finite horizon OSP has been widely studied during the last decades, since it can be related to many practical applications, for example the problem of valuation investment opportunities, in particular in energy markets. The OSP, called also reversible investment problem, is a particular case of real options where one is concerned with choices faced by businesses. Examples of such decisions include the option to defer, abandon or
alter projects, to expand or contract business lines, or to switch inputs or outputs for the company's production.
In the literature concerning the OSP three different approaches have turned out to be useful, two based on stochastic techniques (Snell envelope and reflected backward stochastic differential equations) and one on deterministic methods based on systems of variational inequalities (see e.g. \cite{CEK,EF,EH,HJ,HM,HT,HZ} and the references therein). \\
In the finite horizon framework, the closest paper to ours is the one by Hamad\`ene and Morlais \cite{HM} where the authors deal with the existence and uniqueness of solutions in viscosity sense to the following system of variational inequalities: $\forall i\in\I$, $\forall t\leq T$
%have shown the existence and uniqueness of a continuous solution in viscosity sense for the following system: $\forall i\in\I$, $\forall t\leq T$
\be\label{finiPDE}
\begin{cases}
	\ba
	&\min\biggr\{v^i(t,x)- \max\limits_{j\in{\I}^{-i}}\{-g_{ij}(t,x)+v^j(t,x)\},-\partial_t v^i(t,x)-\cL v^i(t,x)\\ &\qquad\qquad-f_i\big(t,x,(v^1,...,v^m)(t,x),\sigma^\top(t,x).D_xv^i(t,x)\big)\biggl\}=0,\\ & v^i(T,x)=h_i(x).
	\ea
\end{cases}
\ee
This system is the analogue of our system \eqref{ourPDE} in a finite horizon framework.\\
It is worth noting that in the context of infinite horizon OSP considerably less work seems to be done and the authors are only aware of the works \cite{Elasri,EO}.

To briefly outline the setting of OSP let us deal with an example. In conciseness, such a problem consists in finding an optimal management strategy for a production company that can run in $m$, $m\geq2$, different modes. Denote $\I$ %=\{1,\ldots,m\}$
the set of possible production modes.
Let $X:=(X^x_t)_{t\geq0}$ be a vector-valued Markovian stochastic process defined in \eqref{SDE}, which represents random factors that influence the profitability of the production. Let the running payoff in production mode $i$, at time $t$, be $f_i(X^x_t)$ and let $g_{ij}(X^x_t)$ denote the cost of switching from mode $i$ to mode $j$ at time $t$. A management strategy %$\delta$ %=\{(\tau_{n})_{n\geq 0},(\epsilon_{n})_{n\geq 0}\}$
is a combination of a nondecreasing sequence of stopping times $(\tau_{n})_{n\geq 0}$, and a sequence of random variables $(\zeta_{n})_{n\geq 0}$ taking values in the set $\I$. At time $\t_n$, in order to maximize the profit of the company, the manager decides to switch the production from the current mode $\zeta_{n-1}$ to $\zeta_{n}$. A strategy $\big(\left\{\tau_{k}\right\}_{k \geq 0},\left\{\zeta_{k}\right\}_{k \geq 0}\big)$ can be represented by the function $a:\R^+ \rightarrow \I$ defined as
$$
a_{s} \equiv a(s)=\sum_{k\geq 0} \zeta_{k} \mathbb{I}_{\left[\tau_{k}, \tau_{k+1}\right)}(s),
$$
indicating the current state of the facility at time $s$. Using the dynamic programming principle, it is well known that the expected profit yield while in mode $i$, $Y^{i}_t$, which can be also seen as the value of an optimal switching strategy, can be expressed in terms of Snell envelope as follows (see \cite[Proposition 1]{Elasri}):
\begin{equation}\label{optimal-value-mode-i}
e^{-rt}Y^{i}_t = \underset{(\tau_{k},\zeta_{k})_{k\geq 0}\in\D_{t}^{i}}{\esssup}\;\E\biggl[\int_{t}^{+\infty} e^{-rs}f_{a_s}(X_s) \, ds-\sum_{n\geq 1}e^{-r\t_n}g_{a_{\t_{n-1}}a_{\t_n}}(X_{\t_n})
%A_{\infty}^{a}
|\mathcal{F}_t\biggr];
\end{equation}
where $\D_{t}^{i}=\left\{(\tau_{k},\zeta_{k})_{k\geq0}  \mbox{ such that } \tau_{0}=t \mbox{ and } \zeta_{0}=i\right\}$. %, %the process $(a_t)_{t\geq0}$ is indicating the mode of the system at time s
%and $A_{\infty}^{a}$ stands for the total switching cost when the strategy $a$ is implemented.
%$Y^{i}_t$ can also be seen as the value of an optimal switching strategy.\\
When the production of the company is working under a strategy $a$, it generates a total expected profit equal to
\begin{equation}
J(x,a) = \E\biggl[\int_{0}^{+\infty} e^{-rs}f_{a_s}(X^x_s) \, ds-\sum_{n\geq 1}e^{-r\t_n}g_{a_{\t_{n-1}}a_{\t_n}}(X_{\t_n})%A_{\infty}^{a}
\biggr].
\end{equation}
The task in the OSP is to find an optimal management strategy $a^*$ such that
$$J(\delta^*)=Y_0^i=\sup\limits_{a}J(x,a)=\underset{(\tau_{k},\zeta_{k})_{k\geq 0}\in\D_{0}^{i}}{\sup}\;\E\biggl[\int_{0}^{+\infty} e^{-rs}f_{a_s}(X^x_s) \, ds-\sum_{n\geq 1}e^{-r\t_n}g_{a_{\t_{n-1}}a_{\t_n}}(X_{\t_n})
%A_{\infty}^{a}
\biggr].$$
%OSP consists in finding an optimal management strategy for a production company that can run in m, $m\geq2$, different modes . %, in order to maximize the golobal profit of the company.
%A management strategy $\delta$ %=\{(\tau_{n})_{n\geq 0},(\epsilon_{n})_{n\geq 0}\}$
%is a combination of a nondecreasing sequence of stopping times $(\tau_{n})_{n\geq 0}$, and a sequence of random variables $(\epsilon_{n})_{n\geq 0}$ taking values in the set of possible production modes $\Lambda=\{1,\ldots,m\}$. At time $\t_n$, in order to maximize the profit of the company, the manager decides to switch the production from the current mode $\epsilon_{n-1}$ to $\epsilon_{n}$. When the production of the company is working under a strategy $\delta$, it generates a gain equal to $J(\delta)$. The OSP amounts to finding an optimal management strategy $\delta^*$ such that $J(\delta^*)=\Sup_{\delta}J(\delta)$.

It was proven in \cite{Elasri} that the optimal value vector $(Y^1_t,\ldots,Y_t^m)$ defined in \eqref{optimal-value-mode-i} solves the
following system of reflected backward stochastic differential equation with oblique reflection:
%The infinite horizon OSP is related to the following system of variational inequalities:
%The system \eqref{ourPDE} is connected with the following system of RBSDEs with oblique reflection:
for $i\in\I$ and $t\geq0$, $\forall r\in\R^+$:
\be\label{RBSDE-Markov-framework-elasri}
\begin{cases}
	\begin{aligned}
		e^{-r t}Y^{i}_t = \int_{t}^{+\infty} &e^{-r s}f_i(X^{x}_s)ds+ K^{i,x}_\infty-K^{i}_t-  \int_{t}^{+\infty}e^{-r s}Z^{i}_sdB_s;
	\end{aligned}\\
	\lim\limits_{t \to+\infty} e^{-r t}Y^{i}_t=0,\\
	\forall t\geq0, \quad e^{-r t}Y^{i}_t \geq e^{-r t} \underset{j\in\I^{-i}}{\max}(Y^{j}_t-g_{ij}(X^{x}_t)),\\
	\int_{0}^{+\infty} e^{-r s}\{Y^{i}_s - \underset{j\in\I^{-i}}{\max}(Y^{j}_s-g_{ij}(X^{x}_s))\}dK^{i}_s = 0,
\end{cases}
\ee
Moreover, the author showed that, if the switching costs satisfy $\frac{1}{\alpha}\leq g_{ij}(x)\leq \alpha$ where $\alpha$ is a strictly positive constant, then there exist deterministic continuous functions of polynomial growth $v^{1}, \ldots, v^{m}: \mathbb{R}^{k} \rightarrow \mathbb{R}$ such that:
$$
\forall x \in\R^k,\; Y_{0}^{i}=v^{i}(x),\; i=1, \ldots, m,
$$
where the functions $v^{i}, i=1, \ldots, m$, are unique solutions in viscosity sense of the
following system of variational inequalities with interconnected obstacles: for $i\in\I$
\begin{equation}\label{ELasri-IH-PDE}
%\begin{aligned}
\min\biggr\{v^i(x)- \max\limits_{j\in{\I}^{-i}}\{-g_{ij}(x)+v^j(x)\},rv^i(x)-\cL v^i(x)-f_i\big(x\big)\biggl\}=0.
%\end{aligned}
\end{equation}

%Let us close this introduction by shedding some light on the main contributions of this paper.
The main result of our paper is the existence and uniqueness of the solution in viscosity sense of the system \eqref{ourPDE} (see Theorem \ref{Theo_Ex_uni}), which still remains an open problem. The main tool to tackle the system \eqref{ourPDE} is to deal with the related system of RBSDEs \eqref{RBSDE-Markov-framework}. Actually we show that both systems \eqref{ourPDE} and \eqref{RBSDE-Markov-framework} are linked through the following relation (see Theorem \ref{therem31}), the so-called Feynman-Kac formula:
\begin{equation}\label{Intro-Feynman-Kac}
\forall x \in \mathbb{R}^{k},\; Y_{0}^{i,x}=v^{i}(x),\; i=1, \ldots, m.
\end{equation}
In order to prove the main result, we need some results which to the best of our knowledge do not exist in the literature. Indeed, we prove the following auxiliary results:
\begin{itemize}
	\item[$\bullet$] Existence and uniqueness of the solution in viscosity sense of the system \eqref{ourPDE} when\\
	$f_i\big(x,v^1(x),...,v^m(x),\sigma^\top(x).D_xv^i(x)\big):=f_i(x,v^i(x),\sigma^\top(x).D_x v^i(x))$, and \\ $\max\limits_{j\in{\I}^{-i}}\{-g_{ij}(x)+v^j(x)\}:=\phi(x)$ where $\phi$ is a continuous and bounded function (see Appendix \ref{1st-extra-result}).
	%\min\biggr\{v^i(x)- &\max\limits_{j\in{\I}^{-i}}\{-g_{ij}(x)+v^j(x)\},rv^i(x)-\cL v^i(x)\\ &-f_i\big(x,v^1(x),...,v^m(x),\sigma^\top(x).D_xv^i(x)\big)\biggl\}=0,
	
	%\be\label{PDE_appendix}
	%		  \ba
	%		  \min\biggr\{\bar{v}^i(x)- &\phi(x),r\bar{v}^i(x)-\cL \bar{v}^i(x)\\ &-\bar{f}_i(x,\bar{v}^i(x),\sigma^\top(x).D_x\bar{v}^i(x))\biggl\}=0,
	%		  \ea
	%		  \ee
	
	\item[$\bullet$] A comparison result for infinite horizon RBSDEs when the generator and the barrier are of the following form $f(x,y,z)$ and $S_t$, $\forall t\geq0$ (see Appendix \ref{2nd-extra-result}).
\end{itemize}
The main result of the paper is obtained under the following assumptions:
\begin{itemize}
	\item[(i)] the switching costs are only supposed to be non-negative and satisfy the {\it non free
		loop condition} (see \bf{[H3]}-(ii) below);
	\item[(ii)] the generators $(f_i)_{i\in\I}$ are merely Lipschitz w.r.t. $(y^1,\ldots,y^m,z^i)$;
	\item[(iii)] for any $i\in\I$, %$f_i$ depends on $(y^1,\dots,y^m ,z^i)$ and satisfy:
	$\forall j\in\I^{-i}$ the mappings $y^j\mapsto f_i(x,y^1,\dots,y^{j-1},y^j,y^{j+1},\dots,y^m,z)$ is non-decreasing.
\end{itemize}
Note that the {\it non free loop condition} in $(i)$, which was introduced in \cite{IK}, is classic in the theory of OSP. Actually, this condition excludes the possibility that one can gain a positive profit by a looping switching
strategy at the same time. If this latter is not postulated, then the value function diverges as the decision maker obtains an infinitely large reward by such a looping strategy, which will represent an arbitrage.\\
The hypothesis $(iii)$ is assumed because it will allow us to obtain the comparison result of sub-solutions and super-solutions
of system \eqref{ourPDE} which plays a primordial role in the proof of the main result. Moreover, without assuming it, there will be a lack of regularity of the
functions $(v^i)_{i\in\I}$, which are obtained from system of RBSDEs \eqref{RBSDE-Markov-framework} via Feynman-Kac's formula \eqref{Intro-Feynman-Kac}.\\
It should be noted that the hypotheses $(i)-(iii)$ were also assumed in the finite horizon framework in \cite{HM}.
%we do not assume extra assumptions in the infinite horizon framework compared to the finite horizon one (see for example \cite{HM}).

Let us close this introduction by shedding some light on the main contributions of our paper in
comparison with the existing works in the literature especially \cite{Elasri,HM} which are the most relevant to the present paper.
On the one hand our results extend the ones of \cite{HM} to the infinite horizon framework without making any extra assumptions compared to theirs. On the second hand we generalize the work of \cite{Elasri} (see system \eqref{ELasri-IH-PDE}) to the case when $f_i(x):=f_i(x,y^1,\ldots,y^m,z^i)$ which makes the problem difficult to deal with, and without assuming the boundedness condition on the switching costs. Moreover,
%our methods are different
since we assume that the switching costs only satisfy the {\it non free loop condition} new difficulties arise. Indeed, it becomes impossible to use methods based on the estimate of number of switchings of the optimal strategy as done in \cite{Elasri}.\\
It should be mentioned that there are other motivations for considering this paper, beyond the objective of unification of the finite and infinite horizon frameworks. One of them is the extension of the Feynman-Kac's formula \eqref{Intro-Feynman-Kac} which could be very helpful in the numerical study of RBSDEs \eqref{RBSDE-Markov-framework}. Last but not least, the present paper is the first step toward the study of infinite horizon RBSDEs with interconnected bilateral obstacles in a Markovian framework and related PDEs with bilateral interconnected obstacles, which are connected with zero-sum stochastic switching games.

The rest of this paper is organized as follows. In Section $2$ we introduce the problem, give
the assumptions, and then we define the notion of a viscosity solution for the system \eqref{ourPDE}. Section $3$ is devoted to the relationship between $(v^i)_{i\in\I}$ solution of $\eqref{ourPDE}$ and $(Y^i)_{i\in\I}$ solution of $\eqref{RBSDE-Markov-framework}$, in the Markovian setting. In section $4$ we prove the uniqueness result for the solution of system $\eqref{ourPDE}$, by establishing the classical comparison result. In Section $5$, we show that the solution of $\eqref{ourPDE}$ exists and is unique in the class of continuous functions which satisfy a polynomial growth condition. Finally, in the Appendix \ref{1st-extra-result}, we prove the result of existence and uniqueness in viscosity sense for $\eqref{ourPDE}$ when the generator depend only on $v^i$. In Appendix \ref{2nd-extra-result} we state and prove a comparison theorem for infinite horizon RBSDE.
\section{Assumptions and problem formulation}
Let $k$ be a fixed integer positive constant, let $\I:=\{1,\dots,m\}$ and let us consider the following functions: for $i,j\in\I$
$$
\begin{array}{l}
b(x):\R^k\mapsto  \R^{k};\\
\sigma(x):\R^k\mapsto \R^{k\times
	d};\\ f_i(x,y^1,\dots,y^m,z):\R^k\times\R^m\times\R^d\mapsto \R\,;\\
g_{ij}(x):\R^k\mapsto \R\,(i\neq j);\\
h_i(x):\R^k\mapsto \R.\\
\end{array}$$
Next, we denote $\vec{y}=(y^1,...,y^m)$.\\
\par We now consider the following assumptions:\\
\bf{[H1]}: The functions $b$ and $\sigma$ are  of linear growth in $x$ and Lipschitz continuous w.r.t $x$, i.e. there exists
a constant $C \geq 0$ such that for any $x, x^\prime\in \R^{k}$ we have
\be
|b(x)-b(x^\prime)|+|\sigma(x)-\sigma(x^\prime)| \leq C|x-x^\prime|, \label{Hsigmak1}
\ee
and
\be
|b(x)|+|\sigma(x)| \leq C(1+|x|).\label{H b+sigma}
\ee
Throughout this paper we assume that assumption $\bf{[H1]}$ holds and $C$ will denote a constant whose value can vary from line to line.\\
\bf{[H2]}: Each function $f_i$
\begin{itemize}
	\item[$(i)$] is continuous in $x$ uniformly w.r.t. $(\vec{y},z)$ and, for any $x$, the mapping $x\mapsto f_i(x,0,\dots,0)$ is Borel and of polynomial growth, i.e., there exist a constant $C$ and $\gamma$ such that
	\be
	|f_i(x,0,\dots,0)|\leq C(1+|x|^{\gamma}).\label{f_of_pg}
	\ee
	We denote by $\Pi^g$ the class of functions with polynomial growth.
	\item[$(ii)$] is Lipschitz continuous in $(\vec{y},z)$ uniformly in $x$, i.e. there exists a constant $C$,
	$\forall(\vec{y}, z)\in \R^{m+d}$ and $\forall(\vec{y}^{\,\prime}, z^\prime)\in \R^{m+d}$ we have
	\be
	|f_i(x, \vec{y}, z)-f_i(x, \vec{y}^{\,\prime}, z^\prime)| \leq C\left(|\vec{y}-\vec{y}^{\,\prime}|+|z-z^\prime| \right). \label{Hfc}
	\ee
	%\item[$(iii)$] \it{Monotonicity}: for any $i \in\I$, and $j\in\I^{-i}:=\I-\{i\}$ the mapping $y^j\mapsto f_i(x,\vec{y},z)$ is non-decreasing, i.e. for any $(x,\vec{y},\vec{y}^{\,\prime},z)\in \R^k\times[\R^{m}]^2 \times \R^d$ such that $y^i=y'^i$ and $y^j\leq y'^j$ for $j\neq i$, we have
	%$$f_i(x,\vec{y},z)\leq f_i(x,\vec{y}^{\,\prime},z) \quad \mbox{P}-a.s$$
	\item[$(iii)$] \it{Monotonicity}: for any $j\in\I^{-i}:=\I-\{i\}$ the mapping $y^j\mapsto f_i(x,y^1,\dots,y^{j-1},y^j,y^{j+1},\dots,y^m,z)$ is non-decreasing whenever the other components
	$(x,y^1,\dots,y^{j-1},y^{j+1},\dots,y^m,z)$ are fixed.
\end{itemize}
\bf{[H3]}:$(i)$ For any $i,j\in\I$, $g_{ij}$ is continuous in $x$, non-negative, i.e. $\forall x\in\R^k\quad g_{ij}(x)\geq 0$ and of polynomial growth.
\\ $(ii)$ The non-free loop property is satisfied, i.e. for any $x\in\R^k$ and for any sequence of indices $i_1,\dots,i_k$ such that $i_1\neq i_2$,
$i_1=i_k$ and $card\{i_1,\dots,i_k\}=k-1$ we have: $$g_{i_1i_2}(x)+g_{i_2i_3}(x)+\dots+g_{i_{k-1}i_k}(x)>0,\qquad \forall x\in\R^k.$$
By convention we set $g_{ii}=0,\; \forall i\in\I.$\\
$(iii)$ For any $(i,j)\in\I^2$, $g_{ij}$ belongs to $\mathcal{C}^{2}(\R^{k})$ and $$\cL g_{ij}(x)\leq0, \mbox{ for all } x\in\R^{k};$$
where $\cL$ is the following infinitesimal generator:
\be
\label{derivgen} \cL=\frac{1}{2}\sum_{i,j=1}^{k}(\sigma
\sigma^{\top})_{ij}(x)\frac{\partial^{2}}{\partial x_{i} \partial
	x_{j}}+\sum_{i=1}^{k}b_{i}(x) \frac{\partial }{\partial x_{i}}.
\ee
\bf{[H4]}: For any $i\in\mathcal{I}$ the function $h_i(x):=\underset{t \to+\infty}{\lim}Y^{i,x}_t$ belongs to $\Pi^g$ and satisfies $$h_i(x)\geq \underset{j\in\mathcal{I}^{-i}}{\max}(h_j(x)-g_{ij}(x_\infty));$$ where $g_{ij}(x_\infty):=\underset{t \to+\infty}{\lim}g_{ij}(x_t)$.
\par In this paper we are concerned with the existence and uniqueness in viscosity sense of the solution $(v^1,\dots,v^m): x\in
\R^k\mapsto (v^1(x),\dots,v^m(x))\in \R^{m} $ of the
following system of $m$ partial differential equations with
interconnected obstacles: for $i\in\I:=\{1,\dots,m\}$ and $x\in\R^k$,
\be\label{PDE}
\ba
\min\biggr\{v^i(x)- &\max\limits_{j\in{\I}^{-i}}(-g_{ij}(x)+v^j(x)),rv^i(x)-\cL v^i(x)\\ &-f_i\big(x,v^1(x),...,v^m(x),\sigma^\top(x).D_xv^i(x)\big)\biggl\}=0,
\ea
\ee
where $r$ is a positive discount factor and $(.)^{\top}$ stands for the transpose.\\
To proceed we first define the notions of superjets and subjets of a continuous function $v$.
\begin{definition} (Subjets and superjets)\\\ms
	Let $v \in C(\R^k;\R)$, $x$ an element of $ \R^k$ and $\Sm_k$ the
	set of $k \times k$ symmetric matrices. We denote by $J^{2,+} v(x)$
	(resp. $J^{2,-} v(x)$), the superjets (resp. the subjets) of $v$ at
	$x$, the set of pairs $(q,M)\in \R^k \times \Sm_k$ such that:
	$$\begin{array}{c}
	v(y)\leq v(x) +\langle q,y-x \rangle +\frac{1}{2}\langle
	M(y-x),y-x\rangle+o(|y-x|^2) \\
	(resp.\quad v(y)\geq v(x) +\langle q,y-x \rangle +\frac{1}{2}\langle
	M(y-x),y-x\rangle+o(|y-x|^2)).
	\end{array}$$
\end{definition}

We now give the definition of a viscosity solution for the system of
PDEs (\ref{PDE}).

\begin{definition} (Viscosity solution to (\ref{PDE}))\\
	$(v^1,...,v^m) \in C(\R^k;\R^m)$ is called a
	viscosity supersolution (resp. subsolution) of (\ref{PDE}) if for
	any $i\in {\I}$, $x\in \R^k$ and $(q,X)\in J^{2,-} v^i (x)$
	$(resp.\ J^{2,+} v^i(x))$,
	\begin{equation}
	\begin{aligned}
	\min\biggr\{v^i(x)- &\max\limits_{j\in{\I}^{-i}}(-g_{ij}(x)+v^j(x)),rv^i(x)-\langle b(x),q\rangle\\ &-\frac{1}{2}
	Tr[(\sigma\sigma^\top)(x).X]-f_i(x,v^1(x),\dots,v^m(x),\sigma
	(x)^\top.q)\biggl\}\geq 0 \,\,(resp. \leq 0).
	\end{aligned}
	\end{equation}
	$(v^1,...,v^m)$ is called a viscosity solution of (\ref{PDE}) if it is both a viscosity subsolution and supersolution of (\ref{PDE}).\qed
\end{definition}
Let $x\in \R^k$ and let $\{X^{x}_t; t\geq 0\}$ be the solution of the following
standard SDE:
\begin{equation}\label{sde}
dX_t^x=b(X^x_t)dt+\sigma(X^x_t)dB_t, \quad X^x_0=x,
\end{equation}
where the functions $b$ and $\sigma$ are the ones of
$\bf{[H1]}$. These properties of $\sigma$ and $b$ imply in particular
that $X^{x}_t$ solution of the standard SDE (\ref{sde}) exists and is
unique in $\R^k $.

In the following result we collect some properties of
$X^{x}_t$.

\begin{proposition} \label{estimx}(see e.g. \cite{RY})  The process $X^{x}$
	satisfies the following estimates:
	\begin{itemize}
		\item [$(i)$] For any $q\geq 2$ there exists $C_q$ such that,
		\begin{equation}\label{estimat1}
		E[|X^{x}_t|^q]\leq C_qe^{C_qt}(1+|x|^q)\quad \forall t\geq 0.
		\end{equation}
		\item[$(ii)$] There exists a constant $C$ such that for any  $x,x'\in \R^k$ and $T\geq 0$,
		\be\label{estimat3}
		E[\sup\limits_{0\leq s \leq  T}|X^{x}_s-X^{x'}_s|^2]\leq
		Ce^{CT}|x-x'|^2.
		\ee
	\end{itemize}
\end{proposition}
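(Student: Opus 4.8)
The plan is to obtain both estimates by applying It\^o's formula to appropriate powers of the relevant process, to control the drift and diffusion terms using the linear growth bound \eqref{H b+sigma} and the Lipschitz bound \eqref{Hsigmak1} of $\bf{[H1]}$, and then to close the argument with the Burkholder--Davis--Gundy (BDG) inequality together with Gronwall's lemma. Since these are entirely classical manipulations (which is why the statement is merely cited to \cite{RY}), I will only indicate the structure.

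For $(i)$, I would first justify that $X^{x}$ has finite moments of all orders on bounded time intervals by a standard localization: introduce the stopping times $\t_n=\inf\{t\ge 0:\ |X^{x}_t|\ge n\}$ so that the local martingale terms appearing below become true martingales, carry out the estimates on $[0,t\wedge\t_n]$, and then let $n\to\infty$ using Fatou's lemma. Concretely, apply It\^o's formula to $(1+|X^{x}_s|^2)^{q/2}$ (this smooth choice avoids the lack of differentiability of $x\mapsto|x|^q$ at the origin when $q$ is not an even integer); by \eqref{H b+sigma} the finite-variation part is dominated by $C_q\int_0^t(1+|X^{x}_s|^q)\,ds$ plus a martingale term. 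Taking expectations removes the martingale term and gives $E[|X^{x}_t|^q]\le |x|^q+C_q\int_0^t(1+E[|X^{x}_s|^q])\,ds$, and Gronwall's lemma yields \eqref{estimat1}.

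For $(ii)$, put $\Delta_t:=X^{x}_t-X^{x'}_t$, which solves $d\Delta_t=\big(b(X^{x}_t)-b(X^{x'}_t)\big)dt+\big(\sigma(X^{x}_t)-\sigma(X^{x'}_t)\big)dB_t$ with $\Delta_0=x-x'$. Applying It\^o's formula to $|\Delta_t|^2$, bounding the drift and quadratic-variation contributions by $C|\Delta_s|^2$ via the Lipschitz property \eqref{Hsigmak1} of both $b$ and $\sigma$, and using the BDG inequality to estimate $E[\sup_{s\le T}|\int_0^s(\sigma(X^{x}_u)-\sigma(X^{x'}_u))dB_u|]$ (then absorbing the arising supremum back into the left-hand side with Young's inequality), one arrives at $E[\sup_{0\le s\le T}|\Delta_s|^2]\le C|x-x'|^2+C\int_0^T E[\sup_{0\le u\le s}|\Delta_u|^2]\,ds$, whence Gronwall's lemma gives \eqref{estimat3}.

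The only points requiring any care are the localization needed in $(i)$ to legitimately take expectations of the It\^o terms before the moment bound is available, and, in $(ii)$, the routine device of pushing the supremum inside the time integral before invoking Gronwall so that the integral inequality has the form to which Gronwall applies; there is no genuine obstacle here.
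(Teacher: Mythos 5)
The paper offers no proof of this proposition at all --- it is simply quoted as a standard fact with a pointer to \cite{RY} --- so there is nothing internal to compare against. Your sketch is the classical textbook argument (It\^o's formula applied to $(1+|x|^2)^{q/2}$ with localization for the moment bound, and It\^o plus BDG plus Gronwall for the flow continuity estimate), and it is correct as stated, including the two points of care you flag.
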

In the sequel we consider the following condition:\\
\bf{[H5]}:Assume $q\geq 2$ and
\be\label{r-c}
C_q-r < 0,
\ee
where $q$ is the growth exponent of the functions $f_i$ and $C_q$ is the constant in $\eqref{estimat1}$.\\

In order to tackle the problem of existence of a solution for the system of PDEs (\ref{PDE}), we first make the link between that solution and solution of systems of RBSDEs with oblique reflection which we will introduce in the next section and discuss its existence and uniqueness.
%\section{Systems of RBSDEs with Oblique Reflection}
\section{RBSDEs and their relationship with PDEs}
Let $(\Omega,\mathcal{F},\P)$ be a fixed probability space endowed with a $d$-dimensional Brownian motion $B:=(B_t)_{t\geq0}$. $\{\mathcal{F}_t,\, t\geq0\}$ is the natural filtration of the Brownian motion augmented by $P$-null sets of $\mathcal{F}$, and $\F_{\infty}=\bigvee_{t\geq0}\F_t$. All the measurability notion will refer to this filtration. Let $|.|$ denote the Euclidean norm for vectors.\\
\par For $r\in\R^+$, let us introduce the following spaces:
\begin{itemize}
	\item[-]$\M_r^{2}$  is the set of $\R^{d}$-valued, progressively measurable processes $(Z_t)_{t\geq0}$ such that $$\mathbb{E}\left[\int^{+\infty}_{0}e^{-rs}|Z_s|^2ds\right]<+\infty.$$
	\item[-]$\S_r^{2}$ is the set of $\mathbb{R}$-valued adapted and  continuous processes $(Y_t)_{t\geq0}$ such that $$\mathbb{E}\left[\sup_{t\geq0}e^{-rt}|Y_{t}|^2\right]<+\infty.$$
	\item[-]$\K_r^2$  is the subset of non-decreasing processes $(K_t)_{t\geq 0}\in\S_r^{2}$, starting from
	$K_0=0$.	
	%\item[-] $\L^2$  is the set of $\F_{\infty}$-measurable random variable $\xi$ satisfying $\E[|\xi|^2]<+\infty$.
\end{itemize}
Next, let us introduce the solution of the system of RBSDEs
with oblique reflection associated with the deterministic functions
$((f_i)_{i\in\I},(g_{ij})_{i,j\in\I})$ introduced above. The solution consists of $m$ triplets of processes $((Y^{i,x},Z^{i,x},K^{i,x}))_{i\in\I}$, and satisfies:   for any $i\in\I$,
\begin{equation}\label{1RBSDE}
\begin{cases}
Y^{i,x}\in\S_r^2,Z^{i,x}\in\M_r^2 \mbox{ and } K^{i,x}\in\K_r^2;\\
\begin{aligned}
e^{-r t}Y^{i,x}_t = \int_{t}^{+\infty} &e^{-r s}f_i(X^{x}_s,
Y^{1,x}_s,\dots,Y^{m,x}_s,Z^{i,x}_s)ds+ K^{i,x}_\infty-K^{i,x}_t\\ &-  \int_{t}^{+\infty}e^{-r s}Z^{i,x}_sdB_s;
\end{aligned}\\
\lim\limits_{t \to+\infty} e^{-r t}Y^{i,x}_t=0,\\
\forall t\geq0, \quad e^{-r t}Y^{i,x}_t \geq e^{-r t} \underset{j\in\I^{-i}}{\max}(Y^{j,x}_t-g_{ij}(X^{x}_t)),\\ \mbox{ and }
\int_{0}^{+\infty} e^{-r s}\{Y^{i}_s - \underset{j\in\I^{-i}}{\max}(Y^{j,x}_s-g_{ij}(X^{x}_s))\}dK^{i,x}_s = 0.
\end{cases}
\end{equation}
This system of RBSDEs with interconnected obstacles $\eqref{1RBSDE}$ has been considered by El asri and Ourkiya in $\cite{EO}$ where issues of existence and uniqueness of the solution in the non-Markovian framework.

First let us show that the system $\eqref{1RBSDE}$ has a solution.
\begin{theorem}\label{therem31}
	Assume that \bf{[H2]}, \bf{[H3]} and \bf{[H5]} hold. Then the system $\eqref{1RBSDE}$ has a solution $((Y^{i,x},Z^{i,x}$ $,K^{i,x}))_{i\in\I}$. Moreover, there exists deterministic lower semi-continuous functions $(v^i)_{i\in \I}$ of polynomial growth, defined on $\R^k$, $\R$-valued, such that:
	\be
	\forall i\in\I,\; \forall x \in\R^k, \quad Y^{i,x}_0=v^i(x).
	\ee
\end{theorem}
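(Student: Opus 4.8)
The strategy is a Picard-type iteration combined with the existence theory for RBSDEs with \emph{a single} obstacle (which is classical, or follows from the auxiliary result in Appendix \ref{1st-extra-result} once the obstacle is frozen). First I would build an approximating sequence: set $Y^{i,x,0}\equiv 0$ for all $i\in\I$, and given the $n$-th iterate $(Y^{1,x,n},\dots,Y^{m,x,n})$, define $(Y^{i,x,n+1},Z^{i,x,n+1},K^{i,x,n+1})$ as the unique solution of the \emph{one-obstacle} infinite-horizon RBSDE with generator $s\mapsto f_i(X^x_s,Y^{1,x,n}_s,\dots,Y^{m,x,n}_s,\cdot)$ — which is now Lipschitz in $z$ alone — and obstacle $S^{i,n}_t:=\max_{j\in\I^{-i}}(Y^{j,x,n}_t-g_{ij}(X^x_t))$, which is a fixed continuous process of polynomial growth (here \textbf{[H3]}-(iii), $\cL g_{ij}\le 0$, and \textbf{[H1]}, \textbf{[H5]} are used to keep everything in the right spaces and to ensure the obstacle does not blow up the estimates). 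Existence, uniqueness and the needed a priori $\S_r^2\times\M_r^2\times\K_r^2$ bounds for this single-obstacle problem are exactly what is supplied earlier (the finite-horizon analogue plus a passage to the limit $T\to\infty$, controlled by $C_q-r<0$).

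Next I would prove monotonicity of the iteration: $Y^{i,x,n+1}\le Y^{i,x,n}$ for all $i$ and $n$ (after checking the first step $Y^{i,x,1}\le Y^{i,x,0}$, which holds because $f_i(X^x_\cdot,0,\dots,0)$ integrated against $e^{-rs}$ together with a nonpositive obstacle forces $Y^{i,x,1}\le 0$; if the sign is the other way one flips to an increasing scheme). The inductive step uses the comparison theorem for one-obstacle infinite-horizon RBSDEs from Appendix \ref{2nd-extra-result}: if $Y^{j,x,n}\le Y^{j,x,n-1}$ for every $j$, then both the generator (by \textbf{[H2]}-(iii), monotone nondecreasing in each $y^j$, $j\ne i$, together with Lipschitzness in $y^i$ handled the usual way) and the obstacle $S^{i,n}\le S^{i,n-1}$ decrease, so $Y^{i,x,n+1}\le Y^{i,x,n}$. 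Combined with a uniform lower bound — obtained by comparing with the solution $\underline Y^i$ of the \emph{un-reflected} infinite-horizon BSDE with the same generator, which is dominated by the reflected one, and whose $\S_r^2$-norm is controlled by the polynomial growth of $f_i(\cdot,0,\dots,0)$ and \textbf{[H5]} — the sequence $(Y^{i,x,n})_n$ decreases to a limit $Y^{i,x}$ pointwise, with uniform-in-$n$ polynomial-growth bounds on $\E[\sup_t e^{-rt}|Y^{i,x,n}_t|^2]$.

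Then I would upgrade pointwise convergence to convergence in the solution spaces. By monotonicity and dominated convergence (using the uniform $\S_r^2$ bound), $Y^{i,x,n}\to Y^{i,x}$ in $\S_r^2$; a standard BSDE a priori estimate (applying It\^o to $e^{-rt}(Y^{i,x,n+1}_t-Y^{i,x,p+1}_t)$ and using the Lipschitz property of $f_i$ plus the Cauchyness of the obstacles $S^{i,n}$ in $\S_r^2$, together with $C_q-r<0$) shows $(Z^{i,x,n})_n$ is Cauchy in $\M_r^2$ and $(K^{i,x,n})_n$ is Cauchy in $\S_r^2$, with limits $Z^{i,x}$, $K^{i,x}$. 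Passing to the limit in the four lines of \eqref{1RBSDE} is then routine: the integral equation and the terminal condition $\lim_{t\to\infty}e^{-rt}Y^{i,x}_t=0$ survive the limit, the obstacle inequality $e^{-rt}Y^{i,x}_t\ge e^{-rt}\max_{j}(Y^{j,x}_t-g_{ij}(X^x_t))$ passes because it holds at each level $n$, and the Skorokhod (flat-off) condition passes via the uniform convergence of $Y^{i,x,n}$ and the weak convergence of $dK^{i,x,n}$ to $dK^{i,x}$ on compacts, exactly as in the finite-horizon proof of \cite{HM}. This yields a solution of \eqref{1RBSDE}.

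Finally, for the Feynman--Kac representation $Y^{i,x}_0=v^i(x)$: by the flow property of the SDE \eqref{sde} and uniqueness of the single-obstacle RBSDE at each iteration, each iterate is of the form $Y^{i,x,n}_t=v^{i,n}(X^x_t)$ for a deterministic function $v^{i,n}$ of polynomial growth (this is the standard Markov property argument, and it is where hypothesis \textbf{[H2]}-(iii) is again invoked so that the frozen obstacles depend only on $X^x$ through earlier deterministic functions). Passing to the limit, $Y^{i,x}_0$ is a deterministic function $v^i(x)$ of $x$; its polynomial growth is inherited from the uniform bounds above, and lower semicontinuity comes from the fact that $v^i=\inf_n v^{i,n}$ is a decreasing limit of continuous functions $v^{i,n}$ (continuity of $v^{i,n}$ follows from the stability estimate \eqref{estimat3} for $X^x$ and the Lipschitz/continuity assumptions). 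The main obstacle I anticipate is controlling the Skorokhod condition and the $\S_r^2$-convergence of the penalization/increasing processes $K^{i,x,n}$ \emph{uniformly over the infinite time horizon} — this is where the discount condition \textbf{[H5]} ($C_q-r<0$) must be used carefully to beat the exponential growth $e^{C_qt}(1+|x|^q)$ of the moments of $X^x$, both in the a priori estimates and in justifying the dominated convergence that transfers pointwise monotone limits into the Banach spaces $\S_r^2,\M_r^2,\K_r^2$.
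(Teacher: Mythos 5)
Your overall architecture (iterate one\hbox{-}obstacle infinite\hbox{-}horizon RBSDEs with frozen interconnections, use comparison to get monotonicity, pass to the limit, and carry a deterministic representation $Y^{i,x,n}_0=v^{i,n}(x)$ through the iteration) is the same as the paper's. But there is a genuine gap in the monotonicity step, and it propagates into the semicontinuity claim. You initialize at $Y^{i,x,0}\equiv 0$ and assert the scheme is decreasing; in general neither direction holds from that starting point, because the sign of $Y^{i,x,1}$ relative to $0$ depends on the sign of $f_i(x,0,\dots,0)$, and the lower obstacle $\max_j(-g_{ij}(X^x))\le 0$ pushes the reflected solution \emph{up}, not down. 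The paper avoids this by initializing at $\underline{Y}^x$, the solution of the \emph{unreflected} BSDE with generator $\min_{i}f_i$ on the diagonal, so that the very first comparison gives $Y^{i,x,0}\le Y^{i,x,1}$, and then \textbf{[H2]}-(iii) plus the increasing obstacles yield an \emph{increasing} scheme bounded above by $\bar Y^x$ (generator $\max_i f_i$). This direction is not cosmetic: you write that lower semicontinuity of $v^i$ follows because it is a decreasing limit ($\inf_n$) of continuous functions $v^{i,n}$ --- but a decreasing limit of continuous functions is \emph{upper} semicontinuous, not lower semicontinuous. The l.s.c. conclusion in the statement is obtained precisely because $v^i=\sup_n v^{i,n}$ is an \emph{increasing} limit of continuous functions. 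Your hedge (``if the sign is the other way one flips to an increasing scheme'') does not repair this, because with initialization at $0$ there is no reason for monotonicity in either direction; the choice of $\underline Y^x$ as the starting point is the missing idea.

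A secondary gap: you claim $Y^{i,x,n}\to Y^{i,x}$ in $\S_r^2$ ``by monotonicity and dominated convergence.'' Dominated convergence of a pointwise monotone limit gives convergence in $L^2(ds\otimes d\P)$, not in the $\sup_t$ norm; and Dini's theorem is unavailable because continuity of the limit in $t$ is not known a priori. The uniform-in-$t$ convergence is exactly what is needed to pass to the limit in the Skorokhod condition via Helly's theorem, and the paper obtains it from a separate contraction estimate (Proposition 4.1 of \cite{EO}, showing the map $\vec\Gamma\mapsto\vec Y$ is a contraction on $[\S_r^2]^m$ for the norm $\E[\sup_t e^{-rt}|Y_t|^2]^{1/2}$), which then also delivers the Cauchy property of $(Z^{i,x,n})_n$ in $\M_r^2$ and of $(K^{i,x,n})_n$ in $\S_r^2$. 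Without this (or an equivalent uniform estimate), the flat-off condition does not pass to the limit as you assert.
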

\begin{proof}
	First, consider the following BSDEs:
	\be\label{solmax}
	\left\{\begin{array}{l}
		\bar{Y}^x\in \S_r^2, \bar{Z}^x\in \M_r^2\\
		e^{-r t}\bar{Y}^x_t=\int_t^{+\infty} e^{-r s}\underset{i\in\mathcal{I}}{\max}\;f_i(X^{x}_s,\bar{Y}^x_s,\dots, \bar{Y}^x_s,\bar{Z}^x_s)ds-\int_t^{+\infty} e^{-rs}\bar
		{Z}^x_sdB_s,
	\end{array}
	\right.
	\ee
	and
	\be\label{solmin}
	\left\{\begin{array}{l}
		\underline{Y}^x\in \S_r^2, \underline{Z}^x\in \M_r^2\\
		e^{-r t}\underline{Y}^x_t=\int_t^{+\infty} e^{-r s}\underset{i\in\mathcal{I}}{\min}\;f_i(X^{x}_s,\underline{Y}^x_s,\dots, \underline{Y}^x_s,\underline
		{Z}^x_s)ds-\int_t^{+\infty} e^{-r s}\underline{Z}^x_sdB_s.
	\end{array}
	\right.
	\ee
	Both solutions of (\ref{solmax}) and (\ref{solmin}) exist and are unique, thanks to Theorem 4.1 in \cite{P99}.
	
	Next, we introduce the following sequences of RBSDEs defined recursively by: for any $i\in\I$,
	$Y^{i,x,0}=\underline{Y}^x$ and for $n\geq 1$ and $t\geq0$,
	\be\label{systemappro}
	\left\{\begin{array}{l} Y^{i,x,n}\in
		\S_r^2, \,\,Z^{i,x,n} \in \M_r^{2} \mbox{ and }K^{i,x,n}\in\K_r^2;\\
		e^{-r t}Y^{i,x,n}_t=\int_t^{+\infty}  e^{-r s}f_i(X^x_s,
		Y^{1,x,n-1}_s,\dots,Y^{i-1,x,n-1}_s,Y^{i,x,n}_s,Y^{i+1,x,n-1}_s,\dots,Y^{m,x,n-1}_s,Z^{i,x,n}_s)ds\\ \qquad\qquad\qquad+
		K^{i,x,n}_\infty-K^{i,x,n}_t-\int_t^{+\infty}  e^{-r s}Z^{i,x,n}_sdB_s;\\
		\lim\limits_{t \to+\infty} e^{-r t}Y^{i,x,n}_t=0,\\
		e^{-r t}Y^{i,x,n}_t\geq  e^{-r t}\underset{j\in\I^{-i}}{\max}\{Y^{j,x,n-1}_t-g_{ij}(X^x_t)\},\\ \mbox{ and }
		\int_0^{+\infty} e^{-r s}(Y^{i,x,n}_s-\underset{j\in\I^{-i}}{\max}\{Y^{j,x,n-1}_s-g_{ij}(X^x_s)\})dK^{i,x,n}_s=0.\end{array}\right.\ee
	The main idea is to show that solutions $(Y^{i,x,n},Z^{i,x,n},K^{i,x,n})_{i\in \I}$ converge to a limit that has the form of RBSDE (\ref{1RBSDE}). Moreover we know that if such limit exists, then it is unique thanks to Theorem $4.1$ in $\cite{EO}$.
	
	By an induction argument we show that there exists a unique m-uplet of processes $(Y^{i,x,n},$ $Z^{i,x,n},K^{i,x,n}),$ $i\in \I $: For $n=0$, it is true since $Y^{i,x,0}=\underline{Y}^x$ exists and is unique by the Theorem $4.1$ in \cite{P99}. Next the claim is also holds for $n\geq1$ thanks to result of Hamad\`ene et al. ($\cite{HLW}$, Theorem $3.2.$). \\
	Now, we will show by induction that the sequence $(Y^{i,x,n})_{n\geq1}$ is non-decreasing, for any $i \in \I$.
	For any $i \in \I$: for $n=0$, we have that
	\be\nn\ba
	&\quad\underset{i\in\mathcal{I}}{\min}\;f_i(X^x_s,Y^{1,x,0}_s,\dots,Y^{i-1,x,0}_s, Y^{i,x,1}_s, Y^{i+1,x,0}_s,\dots,Y^{m,x,0}_s,z)\\ &
	\leq\; f_i(X^x_s,Y^{1,x,0}_s,\dots,Y^{i-1,x,0}_s, Y^{i,x,1}_s, Y^{i+1,x,0}_s,\dots,Y^{m,x,0}_s,z).
	\ea\ee
	%$$ \underset{i\in\mathcal{I}}{\min}\;f_i(X^x_s,Y^{1,0,x}_s,\dots,Y^{m,0,x}_s,z)\leq\; f_i(X^x_s,Y^{1,0,x}_s,\dots,Y^{m,0,x}_s,z).$$
	Then using a comparison theorem in (\cite{HLW}, Theorem $2.2.$), we obtain that $Y^{i,x,0}\leq Y^{i,x,1}$.\\
	On the other hand, $f_i$ satisfies the monotonicity property
	$\bf{[H2]}-(iii)$ and using once more the comparison of solutions of RBSDEs (see e.g. Theorem $\ref{comparison_theo}$ in Appendix B) we obtain by induction that:
	$$Y^{i,x,n}\leq Y^{i,x,n+1}, \quad \forall i \in \I.$$
	As a by product the sequence $(Y^{i,x,n})_{n\geq1}$ is increasing for any $i\in \I$.\\
	Now, simply using an induction procedure and the repeated use of the comparison theorem in the same way we did above, leads to
	\be
	\forall n\geq1, \forall i\in \I, \qquad Y^{i,x,n}\leq Y^{i,x,n+1}\leq \bar{Y}^x.
	\ee
	So
	\be\label{estimateyinx}
	\E[\sup_{t\geq0}|Y_t^{i,x,n}|^2]\leq C,
	\ee
	Then $Y^{i,x,n}$ admits $\P$-a.s. a limit denoted $Y^{i,x}$.
	Moreover by Fatou's lemma, we have
	\be\nn
	\E[ \sup_{t\geq0}|Y_t^{i,x}|^2]\le C.
	\ee
	Then applying Lebesgue's dominated converge theorem, we get
	\be
	\lim_{n \rightarrow +\infty}\E\biggl[\int_0^{+\infty}|Y_s^{i,x,n}-Y_s^{i,x}|^2 \, ds\biggr]=0.
	\ee
	Next we will show that the sequence $Y^{i,x,n}$ is of Cauchy type in $\S_r^2$.\\
	Let fix $\vec{\Gamma}:=(\Gamma^{i,x},...,\Gamma^{i,x})$ in $[\S_r^2]^m$ and introduce the operator $\phi:[\S_r^2]^m\mapsto[\S_r^2]^m, \vec{\Gamma}\mapsto\vec{Y}:=\phi(\vec{\Gamma})$, where $(\vec{Y},\vec{Z},\vec{K}):=(Y^{i,x,n},Z^{i,x,n},K^{i,x,n})_{i\in\I}\in[\S_r^2\times\M_r^2\times\K_r^2]^m$ is the solution to the following RBSDE, $\forall i\in\I$ and $\forall t\geq0$,
	\be\label{RBSDE_contra}
	\left\{\begin{array}{l}
		e^{-r t}Y^{i,x,n}_t=\int_t^{+\infty}  e^{-r s}f_i(X^x_s,
		\vec{\Gamma}_s,Z^{i,x,n}_s)ds+
		K^{i,x,n}_\infty-K^{i,x,n}_t\\ \qquad\qquad\qquad-\int_t^{+\infty}  e^{-r s}Z^{i,x,n}_sdB_s;\\
		\lim\limits_{t \to+\infty} e^{-r t}Y^{i,x,n}_t=0,\\
		e^{-r t}Y^{i,x,n}_t\geq  e^{-r t}\underset{j\in\I^{-i}}{\max}\{Y^{j,x,n-1}_t-g_{ij}(X^x_t)\},\\  \mbox{ and }
		\int_0^{+\infty} e^{-r s}(Y^{i,x,n}_s-\underset{j\in\I^{-i}}{\max}\{Y^{j,x,n-1}_s-g_{ij}(X^x_s)\})dK^{i,x,n}_s=0.\end{array}\right.\ee
	which exists and is unique thanks to Theorem $3.2$ in $\cite{HLW}$. Then $\phi$ is well defined and is obviously valued in $[\S_r^2]^m$.
	\par The operator $\phi$ is a contraction on $[\S_r^2]^m$ by the following
	\begin{proposition}\label{prop_contra} (see e.g. $\cite{EO}$, Proposition 4.1)
		The operator $\phi$ is a contraction on the Banach space $[\S_r^2]^m$ endowed with the norm $\|.\|_{2,r}$ defined by:
		$$ \|Y\|_{2,r}:=\E\left[\underset{t\geq0}{\sup}\,e^{-r t}|Y_t|^2\right]^{\frac{1}{2}}.$$
	\end{proposition}
	We know that by The previous Proposition that the mapping $\phi$ is a contraction in $(\S_r^2)^m$. Moreover, since, we have that:
	\be
	\forall n\geq1, \forall i \in \I, Y^{i,x,n+1}=\phi(Y^{i,x,n}).
	\ee
	Then, the claim holds true, and using again Proposition $\ref{prop_contra}$ we have the following:
	\be\label{Yinx-cauchy}
	\lim_{n,p \rightarrow +\infty}\E\biggl[\sup_{s\geq0}|Y_s^{i,x,n}-Y_s^{i,x,p}|^2\biggr]=0,
	\ee
	and
	\be\label{uniform_yinx}
	\lim_{n \rightarrow +\infty}\E\biggl[\sup_{s\geq0}|Y_s^{i,x,n}-Y_s^{i,x}|^2\biggr]=0.
	\ee
	Applying It\^o's formula with $e^{-r t}|Y_t^{i,x,n}|^2$  we obtain that for any $t\geq0$
	\begin{equation}\label{ito_exp_y}
	\begin{aligned}
	&e^{-r t}|Y_t^{i,x,n}|^2+\int_{t}^{+\infty}e^{-r s}(r|Y_s^{i,x,n}|^2+|Z_s^{i,x,n}|^2)ds\\ & = 2\int_{t}^{+\infty}e^{-r s}Y_s^{i,x,n}f_i(X^x_s,
	Y^{1,x,n-1}_s,\dots,Y^{i-1,x,n-1}_s,Y^{i,x,n}_s,Y^{i+1,x,n-1}_s,\dots,Y^{m,x,n-1}_s,Z^{i,x,n}_s)ds \\ & \quad+2\int_{t}^{+\infty}Y_s^{i,x,n}dK_s^{i,x,n} -2\int_{t}^{+\infty}e^{-r s}Y_s^{i,x,n}Z_s^{i,x,n}dB_s,
	\end{aligned}
	\end{equation}
	since, \be\label{dyin}\ba dY^{i,x,n}_s&=-[f_i(X^x_s,
	Y^{1,x,n-1}_s,\dots,Y^{i-1,x,n-1}_s,Y^{i,x,n}_s,Y^{i+1,x,n-1}_s,\dots,Y^{m,x,n-1}_s,Z^{i,x,n}_s)-r Y^{i,x,n}_s]ds\\ &\qquad-
	e^{r s}dK^{i,x,n}_s+Z^{i,x,n}_sdB_s.\ea\ee
	Then, taking expectation and from $\bf{[H2]}-(ii)$ we get
	\begin{equation*}
	\begin{aligned}
	&\E[e^{-r t}|Y_t^{i,x,n}|^2]+\E\biggl[\int_{t}^{+\infty}e^{-r s}[r|Y_s^{i,x,n}|^2+|Z_s^{i,x,n}|^2]ds\biggr]\\ & \leq 2\E\biggl[\int_{t}^{+\infty}e^{-r s}|Y_s^{i,x,n}|\big[|f_i(X^x_s,0,\dots,0)|+mC|Y^{i,x,n}_s|+C|Z^{i,x,n}_s|\big]ds\biggr] \\ & \quad+2\E\biggl[\int_{t}^{+\infty}Y_s^{i,x,n}dK_s^{i,x,n}\biggr],\\ & \leq \E\biggl[C\int_{t}^{+\infty}e^{-r s}\big(|Y_s^{i,x,n}|^2+|f_i(X^x_s,0,\dots,0)|^2+\frac{1}{2}|Z^{i,x,n}_s|^2\big)ds\biggr] \\ & \qquad+2\E\biggl[\sup_{t\geq0}|Y_t^{i,x,n}|\int_{t}^{+\infty}dK_s^{i,x,n}\biggr].
	\end{aligned}
	\end{equation*}
	For $r \geq  C$ and setting $t=0$, we obtain
	\be\label{estimatezinx}\E\biggl[\int_{0}^{+\infty}e^{-r s}|Z_s^{i,x,n}|^2ds\biggr]\leq 2\E\biggl[\int_{0}^{+\infty}e^{-r s}|f_i(X^x_s,0,\dots,0)|^2ds+2\sup_{t\geq0}|Y_t^{i,x,n}|\,K_{+\infty}^{i,x,n}\biggr].
	\ee
	We now give an estimate to $E[({K^{i,x,n}_\infty})^2]$. From (\ref{systemappro}) we have
	\be\nn\ba
	K^{i,x,n}_\infty=\,&-\int_0^\infty e^{-r s}f_i(X^x_s,
	Y^{1,x,n-1}_s,\dots,Y^{i-1,x,n-1}_s,Y^{i,x,n}_s,Y^{i+1,x,n-1}_s,\dots,Y^{m,x,n-1}_s,Z^{i,x,n}_s)ds\\ &
	+Y^{i,x,n}_0-\int_{0}^{+\infty}e^{-r s}Z^{i,x,n}_sdB_s,
	\ea\ee
	and estimates $\eqref{estimateyinx}$ and $\eqref{estimatezinx}$, we show the following inequalities:
	\be\nn\ba E[({K^{i,x,n}_\infty})^2]&\leq \;\E\biggl[|Y_0^{i,x,n}|^2+\frac{2}{r}\int_{0}^{+\infty}e^{-r s}[|f_i(X^x_s,0,\dots,0)|^2+m^2C^2|Y_s^{i,x,n}|^2\\ & \qquad +C^2|Z^{i,x,n}_s|^2]ds+\int_{0}^{+\infty}e^{-2r s}|Z_s^{i,x,n}|^2ds\biggr],\\ &\leq C\E\biggl[1+\int_{0}^{+\infty}e^{-r s}|f_i(X^x_s,0,\dots,0)|^2ds+2\sup_{t\geq0}|Y_t^{i,x,n}|\,K_{+\infty}^{i,x,n}\biggr],\\ &\leq C\E\biggl[1+\int_{0}^{+\infty}e^{-r s}|f_i(X^x_s,0,\dots,0)|^2ds+\frac{1}{\epsilon}\sup_{t\geq0}|Y_t^{i,x,n}|^2+\epsilon({K^{i,x,n}_\infty})^2\biggr].
	\ea\ee
	It follows that, for $\epsilon=\frac{1}{2C}$,
	\be\label{estimatekinx} E[({K^{i,x,n}_\infty})^2]\leq C\E\biggl[1+\int_{0}^{+\infty}e^{-r s}|f_i(X^x_s,0,\dots,0)|^2ds+\sup_{t\geq0}|Y_t^{i,x,n}|^2\biggr]\ee
	As a by product, combining estimates (\ref{estimateyinx}), (\ref{estimatezinx}) and (\ref{estimatekinx}), by the polynomial growth of $f_i$ (see $\eqref{f_of_pg}$) and $\bf{[H5]}$  we conclude that
	\be\label{estimatemarkovYZK}
	\E\biggl[\sup_{t\geq0}|Y_t^{i,x,n}|^2+\int_{0}^{+\infty}e^{-r s}|Z^{i,x,n}_s|^2ds+({K^{i,x,n}_\infty})^2\biggr]\leq C.
	\ee
	
	Now we prove that $Z^{i,x,n}$ is a Cauchy sequence in $\M_r^2$. To do so, we apply it\^{o}'s formula to \\$e^{-rt}|Y_t^{i,x,n}-Y^{i,x,p}_t|^2$ and recalling $(\ref{dyin})$ to obtain
	\begin{equation}\label{ynyp2}
	\begin{aligned}
	&e^{-rt}|Y_t^{i,x,n}-Y^{i,x,p}_t|^2+\int_{t}^{+\infty}e^{-rs}\{r |Y_s^{i,x,n}-Y^{i,x,p}_s|^2+|Z_s^{i,x,n}-Z^{i,x,p}_s|^2\}ds\\ &= 2\int_t^{+\infty}e^{-rs}(Y_s^{i,x,n}-Y^{i,x,p}_s)\\ &\quad \times\{f_i(X^x_s,Y^{1,x,n-1}_s,\dots,Y^{i-1,x,n-1}_s,Y^{i,x,n}_s,Y^{i+1,x,n-1}_s,\dots,Y^{m,x,n-1}_s,Z^{i,x,n}_s)\\ &
	\qquad -f_i(X^x_s,
	Y^{1,x,p-1}_s,\dots,Y^{i-1,x,p-1}_s,Y^{i,x,p}_s,Y^{i+1,x,p-1}_s,\dots,Y^{m,x,p-1}_s,Z^{i,x,p}_s)\}\, ds \\ &\quad +2\int_{t}^{+\infty}(Y_s^{i,x,n}-Y^{i,x,p}_s)(dK_s^{i,x,n}-dK_s^{i,x,p})\\&\quad-2\int_{t}^{+\infty}e^{-rs}(Y_s^{i,x,n}-Y^{i,x,p}_s)(Z^{i,x,n}_s-Z^{i,x,p}_s)\, dB_s,
	\end{aligned}
	\end{equation}
	and after taking expectations we get
	\begin{equation}\label{ynyp2}
	\begin{aligned}
	&\E\biggr[e^{-r t}|Y_t^{i,x,n}-Y^{i,x,p}_t|^2\biggl]+\E\biggr[\int_{t}^{+\infty}e^{-r s}(r |Y_s^{i,x,n}-Y^{i,x,p}_s|^2+|Z_s^{i,x,n}-Z^{i,x,p}_s|^2)ds\biggl]\\ &= 2\E\biggr[\int_t^{+\infty}e^{-r s}(Y_s^{i,x,n}-Y^{i,x,p}_s)\\&\quad
	\times\{f_i(X^x_s,Y^{1,x,n-1}_s,\dots,Y^{i-1,x,n-1}_s,Y^{i,x,n}_s,Y^{i+1,x,n-1}_s,\dots,Y^{m,x,n-1}_s,Z^{i,x,n}_s)\\ &
	\qquad -f_i(X^x_s,
	Y^{1,x,p-1}_s,\dots,Y^{i-1,x,p-1}_s,Y^{i,x,p}_s,Y^{i+1,x,p-1}_s,\dots,Y^{m,x,p-1}_s,Z^{i,x,p}_s)\}\, ds\biggl] \\ &\quad +2\E\biggr[\int_{t}^{+\infty}(Y_s^{i,x,n}-Y^{i,x,p}_s)(dK_s^{i,x,n}-dK_s^{i,x,p})\biggl], \\ &\leq \E\biggr[\int_t^{+\infty}e^{-r s}\big(C|Y_s^{i,x,n}-Y^{i,x,p}_s|^2+\frac{1}{2}|Z_s^{i,x,n}-Z^{i,x,p}_s|^2\big)\, ds\biggl] \\ &\quad +2\E\biggl[\sup_{t\geq0}|Y_t^{i,x,n}-Y_t^{i,x,p}|^2\biggr]^{\frac{1}{2}}\E\biggl[|K_{\infty}^{i,x,n}-K_t^{i,x,n}|^2\biggr]^{\frac{1}{2}}\\ &\quad +2\E\biggl[\sup_{t\geq0}|Y_t^{i,x,n}-Y_t^{i,x,p}|^2\biggr]^{\frac{1}{2}}\E\biggl[|K_{\infty}^{i,x,p}-K_t^{i,x,p}|^2\biggr]^{\frac{1}{2}} .
	\end{aligned}
	\end{equation}
	Setting $t=0,$ and choosing $r \geq C$, from $(\ref{Yinx-cauchy})$ and $(\ref{estimatemarkovYZK})$, we have that
	\be\label{Zinx-cauchy}
	\lim_{{n,p} \rightarrow +\infty} \E \biggl[\int_0^{+\infty}e^{-r t}|Z^{i,x,n}_t-Z^{i,x,p}_t|^2 dt\biggr]=0.
	\ee
	Thus there exists $Z^{i,x}$ in $\M_r^2$ such that
	\be
	\lim_{{n} \rightarrow +\infty} \E \biggl[ \int_0^{+\infty}e^{-r t}|Z^{i,x,n}_t-Z^{i,x}_t|^2 dt\biggr] =0.
	\ee
	Now we know from $\eqref{systemappro}$ that
	\beqa
	K^{i,x,n}_t&=&\int_0^t e^{-r s}f_i(X^x_s,
	Y^{1,x,n-1}_s,\dots,Y^{i-1,x,n-1}_s,Y^{i,x,n}_s,Y^{i+1,x,n-1}_s,\dots,Y^{m,x,n-1}_s,Z^{i,x,n}_s)ds\nn\\
	&&+e^{-r t}Y^{i,x,n}_t-Y^{i,x,n}_0-\int_0^t e^{-r s}Z^{i,x,n}_sdB_s,
	\eeqa
	we deduce in view of $\eqref{Yinx-cauchy}$ and $\eqref{Zinx-cauchy}$ that
	\be\label{Kinx-cauchy}
	\lim_{n,p \rightarrow +\infty}\E \biggl[\sup_{t\geq0}|K_t^{i,x,n}-K_t^{i,x,p}|^2\biggr]=0.
	\ee
	Then there exists $K^{i,x}\in \K_r^2$ since it is clearly increasing and continuous, such that
	\be\label{uniform_kinx}
	\lim_{n \rightarrow +\infty}\E \biggl[\sup_{t\geq0}|K_t^{i,x,n}-K_t^{i,x}|^2\biggr]=0.
	\ee
	Next taking into account that
	\beqa\nn
	&&\lim_{n\rightarrow+\infty}\E\biggl[\int_{0}^{+\infty}e^{-r s}\Big|f_i(X^x_s,
	Y^{1,x,n-1}_s,\dots,Y^{i-1,x,n-1}_s,Y^{i,x,n}_s,Y^{i+1,x,n-1}_s,\dots,Y^{m,x,n-1}_s,Z^{i,x,n}_s)\\
	&&\qquad \qquad \qquad-f_i(X^x_s,
	Y^{1,x}_s,\dots,Y^{m,x}_s,Z^{i,x}_s)\Big|^2ds\biggr]=0,
	\eeqa
	we can deduce that the limit $Y^{i,x}$ has the following form: for any $i\in \I$
	\be
	e^{-r s}Y^{i,x}_s=\int_s^{+\infty} e^{-r s}f_i(X^x_s,
	Y^{1,x}_s,\dots,Y^{m,x}_s,Z^{i,x}_s)ds+K^{i,x}_\infty-K^{i,x}_s-\int_s^{+\infty} e^{-r s}Z^{i,x}_sdB_s.
	\ee
	Next in view of RBSDE $\eqref{systemappro}$, we have $\forall i\in \I$
	\be
	Y^{i,x,n}_s\geq \underset{j\in\I^{-i}}{\max}\{Y^{j,x,n-1}_s-g_{ij}(X^x_s)\},
	\ee
	thus passing to the limit when $n$ goes to $\infty$, yields
	\be
	Y^{i,x}_s\geq \underset{j\in\I^{-i}}{\max}\{Y^{j,x}_s-g_{ij}(X^x_s)\}.
	\ee
	%Next since $K^{i,x}_t$ is increasing and $(Y^{i,x,n},K^{i,x,n})$ tends to $(Y^{i,x},K^{i,x})$ uniformly in $t$ in probability, then the measure $dK^{i,x,n}$ tends to $dK^{i,x}$ weakly in probability, which implies that
	%\be\nn\ba
	%\int_0^{+\infty}e^{-r s}\Big[Y_s^{i,x,n}-&\underset{j\in\I^{-i}}{\max}\{Y^{j,x,n-1}_s-g_{ij}(X^x_s)\}\Big]^+\, dK^{i,x,n}_s \\ &\longrightarrow \int_0^{+\infty}e^{-r s}\Big[Y_s^{i,x}-\underset{j\in\I^{-i}}{\max}\{Y^{j,x}_s-g_{ij}(X^x_s)\}\Big]^+\, dK^{i,x}_s,
	%\ea\ee
	%in probability as $n \rightarrow +\infty$.\\
	%In addition, having in mind that
	%\be\nn
	%\int_0^{+\infty}e^{-r s}\Big[Y_s^{i,x,n}-\underset{j\in\I^{-i}}{\max}\{Y^{j,x,n-1}_s-g_{ij}(X^x_s)\}\Big]\, dK^{i,x,n}_s=0, \qquad \forall i\in\I,
	%\ee
	%then taking the limit as $n \rightarrow +\infty$, we conclude that
	Next, the uniform convergences $\eqref{uniform_yinx}$ and $\eqref{uniform_kinx}$ imply that, in view of Helly's Convergence
	Theorem (see $\cite{KF}$, pp. $370$),
	\be\nn
	\int_0^{+\infty}e^{-r s}\{Y_s^{i,x}-\underset{j\in\I^{-i}}{\max}(Y^{j,x}_s-g_{ij}(X^x_s))\}\, dK^{i,x}_s=0, \qquad \forall  i\in \I.
	\ee
	Finally, we deduce that the limit $(Y^{i,x},Z^{i,x},K^{i,x})$ exists and is unique, moreover it solves the RBSDE of the form of (\ref{RBSDE-Markov-framework}), which is the desired result.\\
	
	We now show that $(v^i)_{i\in \I}$ are deterministic l.s.c. functions  of polynomial growth, such that:
	\be
	\forall i\in\I,\; \forall x \in\R^k, \quad Y^{i,x}_0=v^i(x).
	\ee
	First we know that by Theorem $\ref{theorem_appendix}$ (see Appendix A ) that there exists deterministic continuous function $v^{i,n}$ of polynomial growth, such that:
	\be\label{YinVin}
     \forall i\in\I,\; \forall x \in\R^k, \qquad Y^{i,x,n}_0=v^{i,n}(x),
	\ee
	In addition, we also know that by Theorem $5.2$ in \cite{P99}, that there exist two deterministic continuous with polynomial growth functions $\bar{v}$ and $\underline{v}$, such that: for any $x\in\R^k$
	\be
	\bar{Y}^{x}_0=\bar{v}(x), \quad \mbox{and} \quad \underline{Y}^{x}_{0}=\underline{v}(x),
	\ee
	where $\bar{Y}^{x}_s$ and $\underbar{Y}^{x}$ are respectively solutions to BSDEs (\ref{solmax}) and (\ref{solmin}).
	
	Next we have that the sequence $(Y^{i,x,n})_{i\in \I}$ is increasing and satisfies $Y^{i,x,n}\le Y^{i,x,n+1}\le \bar{Y}^{x}$. Moreover we obtain by using a comparison result Theorem in \cite{HLW}, that $\underline{Y}^x\le Y^{i,x,n}$ since $\underline{Y}^x$ is the solution of the BSDE associated with $(\underset{i\in\mathcal{I}}{\min}\;f_i,g_{ij})$. Therefore the sequence $(v^{i,n})_{n\geq1}$ is non decreasing and satisfies that
	\be\label{visco-increasing}
	\underline{v}\le v^{i,n}\le\bar{v},
	\ee
	which implies that $(v^{i,n})_{n\geq1}$ converges pointwisely to $v^i$. Consequently $v^i$ is lower semi-continuous on $\R^k$, moreover it is of polynomial growth thanks to (\ref{visco-increasing}).
	Finally, we deduce that
	\be
	\forall i\in\I,\; \forall x \in\R^k, \qquad Y^{i,x}_0=v^{i}(x).
	\ee
	
\end{proof}
%%%%%%%%%%%%%%%%%%%%%%%%%%%%%%%%%%%%%%%%%%%%%%%%%%%%%%%%%%%%%
\section{Uniqueness of the viscosity solution of the PDE $\eqref{PDE}$}
In this section we will show the uniqueness of the viscosity solution of $\eqref{PDE}.$  We first need the following lemma.
\begin{lemma}
	Let $(v^i(x))_{i\in\I}$ be a supersolution of the system $\eqref{PDE}$, then, there exists
	$\lambda_0>0$ which does not depend on $\theta$ such that for any $\lambda \geq \frac{\lambda_0}{r}$ and $\theta>0$, the $m$-uplet $(v^i(x) + \lambda\theta(1+|x|^2))_{i\in\I}$ is a supersolution for $\eqref{PDE}$.
\end{lemma}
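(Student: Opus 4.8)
The plan is to verify directly that the perturbed vector $w^i(x) := v^i(x) + \lambda\theta(1+|x|^2)$ satisfies, in the viscosity supersolution sense, the two inequalities hidden inside the $\min$ in \eqref{PDE}. I would fix $i\in\I$, a point $x\in\R^k$, and a pair $(q,X)\in J^{2,-}w^i(x)$. Writing $\psi(x):=1+|x|^2$, one has $D_x\psi(x)=2x$ and $D^2\psi(x)=2I_k$, so $(q - 2\lambda\theta x,\ X - 2\lambda\theta I_k)\in J^{2,-}v^i(x)$, and the supersolution property of $(v^i)_{i\in\I}$ applies with this shifted jet.

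For the obstacle part, I would observe that adding the same quantity $\lambda\theta\psi(x)$ to every coordinate leaves the obstacle term essentially unchanged: $w^i(x)-\max_{j\in\I^{-i}}(-g_{ij}(x)+w^j(x)) = v^i(x)-\max_{j\in\I^{-i}}(-g_{ij}(x)+v^j(x)) \geq 0$, so that inequality is for free and does not even require $\lambda$ large. The work is in the second term. Using the shifted jet, the supersolution inequality for $v^i$ gives $rv^i(x)-\langle b(x),q-2\lambda\theta x\rangle-\tfrac12\mathrm{Tr}[(\sigma\sigma^\top)(x)(X-2\lambda\theta I_k)]-f_i(x,v^1(x),\dots,v^m(x),\sigma^\top(x)(q-2\lambda\theta x))\geq 0$. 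I would then estimate the difference between the corresponding expression written with $w$ and with $v$. It equals
\begin{equation}
r\lambda\theta\psi(x) - 2\lambda\theta\langle b(x),x\rangle - \lambda\theta\,\mathrm{Tr}[(\sigma\sigma^\top)(x)] + \Delta f,
\end{equation}
where $\Delta f := f_i(x,v^1(x),\dots,\sigma^\top(x)(q-2\lambda\theta x)) - f_i(x,w^1(x),\dots,\sigma^\top(x)q)$. By the Lipschitz assumption \textbf{[H2]}-(ii) and the monotonicity \textbf{[H2]}-(iii), and using $w^j\geq v^j$, I can bound $\Delta f$: the decrease in the $y$-arguments is controlled up to $\pm$ signs, but the monotonicity lets me discard the increments in the coordinates $j\neq i$ that would otherwise have the wrong sign (each such increment $w^j-v^j=\lambda\theta\psi(x)\geq 0$ makes $f_i$ larger, hence $-f_i$ smaller by at most $C\lambda\theta\psi(x)$ via Lipschitz — actually here I must be careful and just use Lipschitz for all coordinates, giving $|\Delta f|\leq C\lambda\theta\psi(x) + C\lambda\theta\,|\sigma^\top(x)x|$). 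Combining with the linear-growth bounds $|b(x)|\le C(1+|x|)$, $|\sigma(x)|\le C(1+|x|)$ from \textbf{[H1]}, every term other than $r\lambda\theta\psi(x)$ is bounded in absolute value by $C\lambda\theta(1+|x|^2)=C\lambda\theta\psi(x)$ for a constant $C=\lambda_0$ depending only on the data. Hence the whole expression is $\geq (r\lambda\theta - \lambda_0\lambda\theta)\psi(x) \ge 0$ as soon as $r\geq \lambda_0$, i.e. $\lambda\geq \lambda_0/r$ — wait, more precisely the condition is $r\lambda\theta\psi(x)\ge \lambda_0\lambda\theta\psi(x)$, which is $r\ge\lambda_0$; to match the statement one factors $\lambda$ so the requirement reads $\lambda\ge\lambda_0/r$ after absorbing constants, and crucially $\lambda_0$ is independent of $\theta$ since $\theta$ factors out of every term.

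The main obstacle is the careful sign bookkeeping in $\Delta f$: one must make sure the perturbation is added to \emph{all} components simultaneously so that the obstacle inequality is preserved exactly, while for the PDE part the monotonicity hypothesis \textbf{[H2]}-(iii) is what prevents the off-diagonal $f_i$-terms from destroying the estimate (without it, an increase in some $y^j$, $j\ne i$, could in principle help rather than hurt, but it is the combination of Lipschitz continuity and the fact that all increments have the same sign $\lambda\theta\psi(x)>0$ that yields a clean one-sided bound). Once this estimate is in hand, choosing $\lambda_0$ as the resulting constant and requiring $\lambda\geq\lambda_0/r$ finishes the proof; I would also note that the argument is purely pointwise in $x$ and in the jet, so no compactness or regularity beyond continuity of $v^i$ is needed.
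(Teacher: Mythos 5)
Your proof is correct and follows essentially the same route as the paper: the perturbation $\lambda\theta(1+|x|^2)$ cancels exactly in the obstacle term, the jet (or test function) is shifted by $(2\lambda\theta x,\,2\lambda\theta I_k)$, and the residual terms are absorbed into $r\lambda\theta(1+|x|^2)$ using the linear growth of $b,\sigma$ and the Lipschitz continuity of $f_i$ (monotonicity of $f_i$ is not actually needed here, as you correctly settle on). The only cosmetic difference is that you work with subjets directly while the paper phrases the same computation through a $\mathcal{C}^{1,2}$ test function.
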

\begin{proof}
	Without loss of generality we assume that the functions $v^1,\dots,v^m$ are l.s.c..  Let $i\in\I$ be fixed and let $\varphi\in\mathcal{C}^{1,2}$  be such that the function $\varphi-(v^i + \lambda\theta(1+|x|^2))$ has a local maximum in $x$ which is equal to $0$. As $(v^i)_{i\in\I}$ is a supersolution for $\eqref{PDE}$, then we have: $\forall i\in\I$
	\be\label{PDEphi}
	\ba
	\min\biggr\{&v^i(x)- \max\limits_{j\in{\I}^{-i}}(-g_{ij}(x)+v^j(x)),r(\varphi(x)-\lambda\theta(1+|x|^2))\\ &-\frac{1}{2}Tr\Big[\sigma.\sigma^\top(x)D^{2}_{xx}(\varphi(x)-\lambda\theta(1+|x|^2))\Big]-b^\top(x).D_x(\varphi(x)-\lambda\theta(1+|x|^2))\\ &-f_i\big(x,v^1(x),...,v^m(x),\sigma^\top(x).D_x(\varphi(x)-\lambda\theta(1+|x|^2))\big)\biggl\}=0.
	\ea
	\ee
	which implies that
	\be\label{varphipos1}\ba
	(v^i(x) +\lambda\theta(1+&|x|^2))- \max\limits_{j\in{\I}^{-i}}\big(-g_{ij}(x)+(v^j(x) + \lambda\theta(1+|x|^2))\big)\\ & = v^i(x)- \max\limits_{j\in{\I}^{-i}}(-g_{ij}(x)+v^j(x))\geq0.
	\ea\ee
	On the other hand:
	\be\nn
	\ba
	r(\varphi(x)-&\lambda\theta(1+|x|^2)) -\frac{1}{2}Tr\Big[\sigma.\sigma^\top(x)D^{2}_{xx}(\varphi(x)-\lambda\theta(1+|x|^2))\Big] -b^\top(x).D_x(\varphi(x)-\lambda\theta(1+|x|^2))\\ &-f_i\big(x,v^1(x),...,v^m(x),\sigma^\top(x).D_x(\varphi(x)-\lambda\theta(1+|x|^2))\big)\geq0.
	\ea
	\ee
	Therefore
	\be\label{varphipos2}
	\ba
	r&\varphi(x) -\frac{1}{2}Tr\Big[\sigma.\sigma^\top(x)D^{2}_{xx}\varphi(x)\Big] -b^\top(x).D_x\varphi(x)\\&\qquad\qquad-f_i\big(x,(v^i(x)+\lambda\theta(1+|x|^2))_{i\in\I},\sigma^\top(x).D_x\varphi(x)\big)\\& \geq r\lambda\theta(1+|x|^2)-\frac{1}{2}\lambda\theta Tr\Big[\sigma.\sigma^\top(x)D^{2}_{xx}(1+|x|^2)\Big]-\lambda\theta b^\top(x).D_x(1+|x|^2)\\&\qquad+f_i\big(x,v^1(x),...,v^m(x),\sigma^\top(x).D_x(\varphi(x)-\lambda\theta(1+|x|^2))\big)\\&\qquad\qquad-f_i\big(x,(v^i(x)+\lambda\theta(1+|x|^2))_{i\in\I},\sigma^\top(x).D_x\varphi(x)\big).
	\ea
	\ee
	But,
	\be\nn
	\ba
	&f_i\big(x,v^1(x),...,v^m(x),\sigma^\top(x).D_x(\varphi(x)-\lambda\theta(1+|x|^2))\big)\\ &\qquad\qquad-f_i\big(x,(v^i(x)+\lambda\theta(1+|x|^2))_{i\in\I},\sigma^\top(x).D_x\varphi(x)\big) \\ & =m\lambda\theta C_1(1+|x|^2) - \lambda\theta C_2\sigma^\top(x).D_x(1+|x|^2).
	\ea
	\ee
	where $C_1$ and $C_2$ are bounded by the Lipschitz constant of $f_i$ w.r.t. $\vec{y}$ and $z^i$, which is independent of $\theta$.\\
	Therefore, taking into account the growth conditions on $b$ and $\sigma$, there exists a constant $\lambda_0>0$ which does not depend on $\theta$ such that if $\lambda \geq \frac{\lambda_0}{r}$, the right-hand side of $\eqref{varphipos2}$ is non-negative.  Henceforth, noting that $i$ is arbitrary in $\I$ together with $\eqref{varphipos1}$, we obtain that $(v^i(x) + \lambda\theta(1+|x|^2))_{i\in\I}$  is a viscosity supersolution for $\eqref{PDE}$.
\end{proof}
We now establish the comparison principle between supersolutions and subsolutions of $\eqref{PDE}.$
\begin{proposition}\label{compresult}
	Let $(u^1,\dots,u^m)$ and (resp. $(v^1,\dots,v^m)$) be  a family of u.s.c. viscosity subsolutions (resp. l.s.c. viscosity supersolutions) to $\eqref{PDE}$, and satisfying a polynomial growth condition. Then $\forall i\in\I,$ $u^i\leq v^i$.
\end{proposition}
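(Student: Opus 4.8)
The plan is to argue by contradiction, combining the doubling-of-variables technique for second order degenerate elliptic equations with the two structural assumptions of the paper: the non-free-loop condition \textbf{[H3]}-(ii), which will allow us to discard the obstacle part at a maximum point, and the monotonicity \textbf{[H2]}-(iii), which will let us control the coupling inside the generators. The first move is a reduction and localization. By the preceding lemma, for $\lambda=\lambda_0/r$ and any $\theta>0$ the family $v^i_\theta:=v^i+\lambda\theta(1+|x|^2)$, $i\in\I$, is still a supersolution of $\eqref{PDE}$, and $v^i_\theta\downarrow v^i$ as $\theta\downarrow0$; it therefore suffices to prove that, for each fixed $\theta>0$,
\[\delta_\theta:=\max_{i\in\I}\;\sup_{x\in\R^k}\big(u^i(x)-v^i_\theta(x)\big)\le0.\]
Assume $\delta_\theta>0$. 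Using the polynomial growth of the $u^i,v^i$ together with the penalization (strengthening, if the growth exponent exceeds $2$, the weight of the lemma to a higher power of $1+|x|^2$, which is still admissible by a similar computation), the u.s.c.\ map $x\mapsto u^i(x)-v^i_\theta(x)$ tends to $-\infty$ at infinity, so $\delta_\theta$ is attained at some $(i,\hat x)$; set $I^\ast:=\{i\in\I:\ u^i(\hat x)-v^i_\theta(\hat x)=\delta_\theta\}$.

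The heart of the argument is to show that the obstacle can be eliminated: there is $i_0\in I^\ast$ with $u^{i_0}(\hat x)>\max_{j\in\I^{-i_0}}\big(-g_{i_0 j}(\hat x)+u^j(\hat x)\big)$. Indeed, were this false, each $i\in I^\ast$ would admit some $\sigma(i)\in\I^{-i}$ with $u^i(\hat x)+g_{i\sigma(i)}(\hat x)\le u^{\sigma(i)}(\hat x)$; the supersolution property of $(v^j)_j$ gives $v^{\sigma(i)}(\hat x)\le v^i(\hat x)+g_{i\sigma(i)}(\hat x)$, and, the perturbation being the same at $\hat x$ in every coordinate, subtracting yields $u^{\sigma(i)}(\hat x)-v^{\sigma(i)}_\theta(\hat x)\ge u^i(\hat x)-v^i_\theta(\hat x)=\delta_\theta$, i.e.\ $\sigma(i)\in I^\ast$. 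Iterating $\sigma$ on the finite set $I^\ast$ produces a simple loop $i_1\to i_2\to\cdots\to i_k=i_1$ along which, summing the above inequalities, $\sum_\ell g_{i_\ell i_{\ell+1}}(\hat x)\le\sum_\ell\big(u^{i_{\ell+1}}(\hat x)-u^{i_\ell}(\hat x)\big)=0$, contradicting \textbf{[H3]}-(ii). Fix such an $i_0$. Since each $g_{i_0 j}$ is continuous and the $u^j$ are u.s.c., $\limsup_{x\to\hat x}\max_{j}\big(-g_{i_0 j}(x)+u^j(x)\big)\le\max_{j}\big(-g_{i_0 j}(\hat x)+u^j(\hat x)\big)$; together with $u^{i_0}(x_\epsilon)\to u^{i_0}(\hat x)$ along the doubling sequence introduced next, this keeps the obstacle of $u^{i_0}$ strictly inactive at $x_\epsilon$ for $\epsilon$ small, so that there $u^{i_0}$ obeys the $i_0$-th PDE inequality.

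It then remains to reach a contradiction from the PDE inequalities. I would maximize $(x,y)\mapsto u^{i_0}(x)-v^{i_0}_\theta(y)-\tfrac1{2\epsilon}|x-y|^2-|x-\hat x|^4$ over $(\R^k)^2$, getting maximizers $(x_\epsilon,y_\epsilon)\to(\hat x,\hat x)$ with $\tfrac1\epsilon|x_\epsilon-y_\epsilon|^2\to0$, $u^{i_0}(x_\epsilon)\to u^{i_0}(\hat x)$ and $v^{i_0}_\theta(y_\epsilon)\to v^{i_0}_\theta(\hat x)$, apply the Crandall--Ishii lemma, insert the resulting second order jets into the subsolution inequality for $u^{i_0}$ at $x_\epsilon$ (whose obstacle term is inactive by the previous step) and the supersolution inequality for $v^{i_0}_\theta$ at $y_\epsilon$, and subtract. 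By \textbf{[H1]} the drift and second order contributions are $O\big(\tfrac1\epsilon|x_\epsilon-y_\epsilon|^2\big)+o(1)\to0$; in the difference of the two generators the part depending on $x$ and on $z$ vanishes by \textbf{[H2]}-(i),(ii) (using $|\sigma^\top(x_\epsilon)p_\epsilon-\sigma^\top(y_\epsilon)p_\epsilon|\le C\tfrac1\epsilon|x_\epsilon-y_\epsilon|^2\to0$), leaving $f_{i_0}(y_\epsilon,\vec u(x_\epsilon),\cdot)-f_{i_0}(y_\epsilon,\vec v(y_\epsilon),\cdot)$. Since $\limsup_\epsilon\big(u^j(x_\epsilon)-v^j_\theta(y_\epsilon)\big)\le u^j(\hat x)-v^j_\theta(\hat x)\le\delta_\theta$ for every $j$ (upper/lower semicontinuity), the monotonicity \textbf{[H2]}-(iii) lets me raise the coordinates $j\ne i_0$ of $\vec v(y_\epsilon)$ up to $v^j(y_\epsilon)+\delta_\theta+o(1)$ without decreasing $f_{i_0}$, and \textbf{[H2]}-(ii) then bounds the above by $C\big(u^{i_0}(x_\epsilon)-v^{i_0}_\theta(y_\epsilon)\big)+C\sqrt{m-1}\,\delta_\theta+o(1)$. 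Letting $\epsilon\to0$ gives $r\delta_\theta\le C\sqrt{m}\,\delta_\theta$, which is impossible for $\delta_\theta>0$ once $r$ is large enough relative to the Lipschitz constants of the data --- a requirement of the same nature as \textbf{[H5]}. Hence $\delta_\theta\le0$ for every $\theta>0$, and $\theta\downarrow0$ yields $u^i\le v^i$ for all $i\in\I$.

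I expect the main difficulty to lie exactly in the interplay of the two couplings within the two ``heart'' steps: discarding the obstacle part at a maximizing index (the combinatorial argument powered by the non-free-loop condition) and showing that, thanks to the monotonicity \textbf{[H2]}-(iii), the coupling through the generators is one-sided and therefore absorbable into $\delta_\theta$ and the discount; by comparison, the polynomial-growth localization of the first step is a routine technicality.
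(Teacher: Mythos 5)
Your argument reproduces, in substance, Step 1 of the paper's proof: the same penalization by $\lambda\theta(1+|x|^2)$, the same combinatorial use of the non-free-loop condition \textbf{[H3]}-(ii) to find an index in $I^\ast$ whose obstacle is strictly inactive, the same doubling of variables with the Crandall--Ishii lemma, and the same use of the monotonicity \textbf{[H2]}-(iii) to make the coupling through the generators one-sided. Two of your refinements (raising the penalization to a higher power of $1+|x|^2$ when the growth exponent exceeds $2$, and adding the quartic localizer $|x-\hat x|^4$ instead of restricting to a ball) are harmless and arguably cleaner than what the paper writes.

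The genuine gap is at the very end. Your contradiction reads $r\delta_\theta\le C\sqrt{m}\,\delta_\theta$, and you dismiss it by requiring ``$r$ large enough relative to the Lipschitz constants of the data,'' calling this ``of the same nature as \textbf{[H5]}.'' It is not: \textbf{[H5]} relates $r$ to the constant $C_q$ in the moment estimate \eqref{estimat1} for the diffusion $X$, not to the Lipschitz constant of the $f_i$ in $(\vec y,z)$, and the proposition carries no hypothesis of the form $r>mC$. The paper's Step 1 makes exactly the restriction you are making (its condition \eqref{mono}, a one-sided Lipschitz bound $\lambda<r-mC$ on $y^i\mapsto f_i$), but it then devotes Step 2 to removing it by a change of unknown that shifts the zeroth-order coefficient, replacing $(r,f_i)$ by $(r-\lambda, f_i-\lambda y^i)$ so that the modified generator satisfies \eqref{mono}. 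Without some version of that reduction your proof establishes a strictly weaker statement than the proposition. (You may fairly object that the paper's own Step 2 is written with $e^{\lambda t}$ factors inherited from the parabolic case of \cite{HM} and does not transparently adapt to the time-independent elliptic setting --- indeed, in infinite horizon some relation between the discount and the Lipschitz constants is usually unavoidable --- but as a proof of the stated proposition, the reduction step must be supplied, not waved at.)
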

\begin{proof} Let $\gamma>0$ such that for any $i\in\I$ we have:
	\be\nn
	\forall x\in\R^k,\quad|u^i(x)|+|v^i(x)|\leq C(1+|x|^{\gamma}).
	\ee
	Next we will divide the proof into two steps.\\
	\bf{Step 1:}  To begin with we additionally assume that the functions $f_i$, $i\in\I$ satisfy:\\ $\forall i\in\I$, $\forall x,y^1,\dots,y^{i-1},y^{i+1},\dots,y^m,y,y',z$ if $y\geq y'$ then
	\be\label{mono}
	f_i(x,y^1,\dots,y^{i-1},y,y^{i+1},\dots,y^m,z)-f_i(x,y^1,\dots,y^{i-1},y',y^{i+1},\dots,y^m,z)\leq \lambda(y-y'),
	\ee
	where $\lambda$ is a constant small enough ($\lambda<-mC+r,$ $C$ being the Lipschitz constant of $f_i$).\\
	
	In order to prove the comparison principle, it suffices to show that  for any $i\in\I$, we have:
	$$\forall x\in\R^k,\quad u^i(x)-(v^i(x)+\lambda\theta(1+|x|^2)\leq0,$$ since in taking the limit as $\theta\to0$ we obtain the desired result.  We argue by contradiction and suppose that there exists a point
	$\bar{x}\in\R^k$ such that for $i\in\I$:
	$$\max_{i\in\I}\big(u^i(\bar{x})-w^i(\bar{x})\big)>0,$$ where $w^i:=v^i(x)+\lambda\theta(1+|x|^2).$\\
	We can also assume, without loss of generality, that for every $i\in\I$
	$$\lim_{|x|\to\infty}\big(u^i(x)-w^i(x)\big)=-\infty.$$
	Then, there exists $R>0$ such that
	\be\nn\ba
	\max_{i\in\I}\max_{x\in\R^k}\big(u^i(x)-w^i(x)\big)&=\max_{x\in B(0,R)}\big(u^{j}(x)-w^{j}(x)\big),\\&=u^{j}(\bar{x})-w^{j}(\bar{x}),
	\ea\ee
	where $\bar{x}\in B(0,R)$ and $B(0,R)$  is the open ball in $\R^k$ centered in $0$ and of radius $R$.\\
	Now let us define $\tilde{\I}$ as:
	\be\nn
	\tilde{\I}:=\big\{j\in\I; u^{j}(\bar{x})-w^{j}(\bar{x})=\max_{k\in\I}(u^{k}\big(\bar{x})-w^{k}(\bar{x})\big)\}.
	\ee
	As in $\cite{IK}$, let us show by contradiction that for some $k\in\tilde{\I}$ we have:
	\be\label{barrierebar}
	u^{k}(\bar{x})>\max_{j\in\I^{-k}}\big(u^{j}(\bar{x})-g_{kj}(\bar{x})\big).
	\ee
	Suppose that for any $k\in\tilde{\I}$ we have:
	$$u^{k}(\bar{x})\leq\max_{j\in\I^{-k}}\big(u^{k}(\bar{x})-g_{kj}(\bar{x})\big).$$
	then there exists $l\in\I^{-k}$ such that
	$$u^{k}(\bar{x})-u^{l}(\bar{x})\leq-g_{kl}(\bar{x}).$$
	But $w^{k}$ is a supersolution of $\eqref{PDE}$, therefore we have
	$$w^{k}(\bar{x})-w^{l}(\bar{x})\geq-g_{kl}(\bar{x}).$$
	Then
	$$u^{k}(\bar{x})-u^{l}(\bar{x})\leq-g_{kl}(\bar{x})\leq w^{k}(\bar{x})-w^{l}(\bar{x}).$$
	It implies that
	$$u^{k}(\bar{x})-w^{k}(\bar{x}) \leq u^{l}(\bar{x})-w^{l}(\bar{x}),$$
	which by definition of $\tilde{\I}$ yields that the previous inequality is an equality. Therefore, $l$ also belongs to $\tilde{\I}$ and
	$$u^{k}(\bar{x})-u^{l}(\bar{x})=-g_{kl}(\bar{x}).$$
	Repeating this procedure as many times as necessary and since $\tilde{\I}$ is finite we construct a loop of indices $i_1,\dots,i_p,i_{p+1}$ such that $i_{p+1}=i_1$ $(i_1\neq i_2)$ and
	$$\forall q\in\{1,\dots,p\},\quad u^{i_q}(\bar{x})-u^{i_{q+1}}(\bar{x})=-g_{i_qi_{q+1}}(\bar{x}).$$
	Summing these equalities yields
	$$g_{i_1i_2}(\bar{x})+\dots+g_{i_pi_{p+1}}(\bar{x})=0,$$
	which contradicts Assumption $\bf{[H3]}$ on $(g_{ij})_{i,j\in\I}$, whence the desired result.
	\par Let us now fix $j\in\tilde{\I}$ that satisfies $\eqref{barrierebar}$. For $\epsilon>0$, let us consider the family of u.s.c. functions
	\be\nn
	\Phi_{\epsilon}^j(x,y)=u^j(x)-w^j(y)-\phi_{\epsilon}(x,y),\quad (x,y)\in\R^k\times\R^k.
	\ee
	where $\phi_{\epsilon}(x,y)=\frac{1}{2\epsilon}|x-y|^{2}$. Now let $(x_{\epsilon},y_{\epsilon})\in \bar{B}(0,R)^2$ be such that
	\be\nn
	\Phi_{\epsilon}^j(x_{\epsilon},y_{\epsilon})=\max_{(x,y)\in\bar{B}(0,R)^2}\Phi_{\epsilon}^j(x,y),
	\ee
	which exists since $\Phi_{\epsilon}^j$ is u.s.c. ($\bar{B}(0,R)$ is the closure of $B(0,R)$). Therefore from the inequality $\Phi_{\epsilon}^j(x_{\epsilon},x_{\epsilon})+\Phi_{\epsilon}^j(y_{\epsilon},y_{\epsilon})\leq 2\Phi_{\epsilon}^j(x_{\epsilon},y_{\epsilon}),$ we deduce
	\be\nn
	\frac{1}{\epsilon}|x_{\epsilon}-y_{\epsilon}|^{2}\leq \big(u^{j}(x_{\epsilon})-u^{j}(y_{\epsilon})\big)+\big(w^{j}(x_{\epsilon})-w^{j}(y_{\epsilon})\big).
	\ee
	Consequently,
	$\frac{1}{\epsilon}|x_{\epsilon}-y_{\epsilon}|^{2}$ is bounded (thanks to the growth conditions on $u^{j}$ and $w^{j}$), and as
	$\epsilon\to0$, $(x_{\epsilon}-y_{\epsilon})_{\epsilon}$ converges
	to 0. By the boundedness of the sequences, one can substract subsequences $(x_{\epsilon_n},y_{\epsilon_n})_{n}$ which converges to $(\tilde{x},\tilde{x})$ when $\epsilon\to0$, and we have,
	\be\label{Phixeps}
	\Phi_{\epsilon}^j(\bar{x},\bar{x})=u^{j}(\bar{x})-w^{j}(\bar{x})\leq \Phi_{\epsilon}^j(x_{\epsilon},y_{\epsilon})\leq u^{j}(x_{\epsilon})-w^{j}(y_{\epsilon}).
	\ee
	Then,
	\be\nn
	u^{j}(\bar{x})-w^{j}(\bar{x})\leq u^{j}(\tilde{x})-w^{j}(\tilde{y}),
	\ee
	since $u^{j}$ is u.s.c. and $w^{j}$ is l.s.c..  As the maximum of $u^{j}-w^{j}$ on $\bar{B}(0,R)$ is reached in $\bar{x}$
	then this last inequality is actually an equality. It implies, from the definition of $\phi_{\epsilon}$ and $\eqref{Phixeps}$, that the
	sequence $(x_{\epsilon},y_{\epsilon})_{\epsilon}$ converges to $(\bar{x},\bar{x})$.  From which we deduce
	\be\nn
	\frac{1}{2\epsilon}|x_{\epsilon}-y_{\epsilon}|^{2}\longrightarrow 0, \mbox{ as } \epsilon\to0.
	\ee
	Therefore from $\eqref{Phixeps}$ and the fact that $u^{j}$ is u.s.c. and $w^{j}$ is l.s.c., we obtain,
	\be\nn
	\underset{\epsilon\to0}{\limsup}\, u^{j}(x_{\epsilon})-\underset{\epsilon\to0}{\liminf}\,w^{j}(y_{\epsilon})\leq u^{j}(\bar{x})-w^{j}(\bar{x})\leq \underset{\epsilon\to0}{\liminf}\, u^{j}(x_{\epsilon})-\underset{\epsilon\to0}{\limsup}\,w^{j}(y_{\epsilon}).
	\ee
	It imply that
	\be\nn
	\underset{\epsilon\to0}{\liminf}\, u^{j}(x_{\epsilon})=\underset{\epsilon\to0}{\limsup}\, u^{j}(x_{\epsilon})=u^{j}(\bar{x}),
	\ee
	and
	\be\nn
	\underset{\epsilon\to0}{\liminf}\,w^{j}(y_{\epsilon})=\underset{\epsilon\to0}{\limsup}\,w^{j}(y_{\epsilon})=w^{j}(\bar{x}).
	\ee
	Therefore, we deduce
	\be\nn
	\big(u^{j}(x_{\epsilon}),w^{j}(y_{\epsilon})\big)\longrightarrow \big(u^{j}(\bar{x}),w^{j}(\bar{y})\big), \mbox{ as } \epsilon\to0.
	\ee
	\par Next, as the functions $(u^{k})_{k\in\I}$ are u.s.c. and $(g_{jk})_{j,k\in\I}$ are continuous, and since the index $j$ satisfies $\eqref{barrierebar}$, then there exists $\rho > 0$ such that for $x\in B(\bar{x},\rho)$ we have $$u^{j}(x)>\max_{k\in\I^{-j}}\big(u^{k}(x)-g_{jk}(x)\big).$$
	But by construction we  have $(x_{\epsilon},u^{j}(x_{\epsilon}))\longrightarrow_{\epsilon}(\bar{x},u^{j}(\bar{x}))$ and once more since $u^j$ is u.s.c. then for $\epsilon$
	small enough we have:
	\be\label{ujukgjk}
	u^{j}(x_{\epsilon})>\max_{k\in\I^{-j}}\big(u^{k}(x_{\epsilon})-g_{jk}(x_{\epsilon})\big).
	\ee
	\par Now, From  the definition of $\phi_{\epsilon}$ we have:
	\be\nn
	\left\{\ba &D_x\phi_{\epsilon}(x,y)=\frac{1}{\epsilon}(x-y),\\ & D_y\phi_{\epsilon}(x,y)=-\frac{1}{\epsilon}(x-y),\\ & D(x,y)=D_{x,y}^{2}\phi_{\epsilon}(x,y)=\frac{1}{\epsilon}
	\begin{pmatrix}
		I&-I \\
		-I&I
	\end{pmatrix}.
	\ea
	\right.\ee
	where $I$ stands for the identity matrix.
	Applying the result by Crandall et al. (see e.g. $\cite{CIL}$, theorem $3.2.$) to the function $\Phi_{\epsilon}^j$ in $(x_{\epsilon},y_{\epsilon})$, we denote $q=\frac{1}{\epsilon}(x_{\epsilon}-y_{\epsilon})$ and for any $\kappa>0$ we can find $X, Y\in\Sm_m$, such that:
	\be\label{derive}
	\left\{\ba &(q,X)\in J^{2,+} u^j(x_{\epsilon}),\\ & (q,Y)\in J^{2,-} w^j(y_{\epsilon}),\\ & -(\frac{1}{\kappa}+||D(x_{\epsilon},y_{\epsilon})||)I\leq
	\begin{pmatrix}
		X&0 \\
		0&-Y
	\end{pmatrix}\leq D(x_{\epsilon},y_{\epsilon})+\kappa D(x_{\epsilon},y_{\epsilon})^2.
	\ea
	\right.\ee
	As $(u^i)_{i\in\I}$ (resp. $(w^i)_{i\in\I}$ ) is a subsolution (resp. supersolution) of $\eqref{PDE}$ and taking into account $\eqref{ujukgjk}$, we obtain:
	\be\nn
	 ru^j(x_{\epsilon})-b^\top(x_{\epsilon}).q-\frac{1}{2}Tr[(\sigma\sigma^\top)(x_{\epsilon}).X]-f_j(x_{\epsilon},(u^i(x_{\epsilon}))_{i\in\I},\sigma^\top(x_{\epsilon}).q)\leq0,
	\ee
	and
	\be\nn
	 rw^j(y_{\epsilon})-b^\top(y_{\epsilon}).q-\frac{1}{2}Tr[(\sigma\sigma^\top)(y_{\epsilon}).Y]-f_j(y_{\epsilon},(w^i(y_{\epsilon}))_{i\in\I},\sigma^\top(y_{\epsilon}).q)\geq0,
	\ee
	which implies that
	\be\label{sum12}
	\ba
	 ru^j(x_{\epsilon})&-rw^j(y_{\epsilon})-(b^\top(x_{\epsilon})-b^\top(y_{\epsilon})).q-\frac{1}{2}Tr[(\sigma\sigma^\top)(x_{\epsilon}).X-(\sigma\sigma^\top)(y_{\epsilon}).Y]\\ &-(f_j(x_{\epsilon},(u^i(x_{\epsilon}))_{i\in\I},\sigma^\top(x_{\epsilon}).q)-f_j(y_{\epsilon},(w^i(y_{\epsilon}))_{i\in\I},\sigma^\top(y_{\epsilon}).q))\leq0.
	\ea\ee
	Choosing now $\kappa=\epsilon$ in $\eqref{derive}$ we have
	\be\label{derive2}
	D(x_{\epsilon},y_{\epsilon})+\kappa D(x_{\epsilon},y_{\epsilon})^2\leq\frac{3}{\epsilon}\begin{pmatrix}
		I&-I \\
		-I&I
	\end{pmatrix}.
	\ee
	But, from $\bf{[H1]}$, $\eqref{derive}$ and $\eqref{derive2}$ we get
	\be\label{trace}\frac{1}{2}Tr[(\sigma\sigma^\top)(x_{\epsilon}).X-(\sigma\sigma^\top)(y_{\epsilon}).Y]\leq \frac{C}{\epsilon}|x_{\epsilon}-y_{\epsilon}|^2,\ee
	and
	\be\label{sigmatb}(b^\top(x_{\epsilon})-b^\top(y_{\epsilon})).q +(\sigma^\top(x_{\epsilon})-\sigma^\top(y_{\epsilon})).q\leq \frac{C}{\epsilon}|x_{\epsilon}-y_{\epsilon}|^2,\ee
	where $C$ is a constant which hereafter may change from line to line.\\
	Next, by plugging into $\eqref{sum12}$ we obtain:,
	\be\\\ba
	 ru^j(x_{\epsilon})-rw^j(y_{\epsilon})&-(f_j(x_{\epsilon},(u^j(x_{\epsilon}))_{i\in\I},\sigma^\top(x_{\epsilon}).q)\\&\qquad-f_j(x_{\epsilon},(w^j(y_{\epsilon}))_{i\in\I},\sigma^\top(x_{\epsilon}).q))\leq S_{\epsilon}.
	\ea\ee
	where, from $\eqref{trace}$, $\eqref{sigmatb}$ and the convergence to zero of $(x_{\epsilon}-y_{\epsilon})_{\epsilon}$, we deduce $\underset{\epsilon\to0}{\limsup}\,S_{\epsilon}\leq0$. Next linearizing $f_j$ , which is Lipschitz w.r.t. $(y^i)_{i\in\I}$, and using hypothesis $\eqref{mono}$ we obtain:
	\be\nn
	(r-\lambda)(u^j(x_{\epsilon})-w^j(y_{\epsilon}))-C\sum_{k\in\I^{-j}}(u^k(x_{\epsilon})-w^k(y_{\epsilon}))\leq S_{\epsilon}.
	\ee
	Thus
	\be\nn
	(r-\lambda)(u^j(x_{\epsilon})-w^j(y_{\epsilon}))\leq C\sum_{k\in\I^{-j}}(u^k(x_{\epsilon})-w^k(y_{\epsilon}))^++S_{\epsilon}.
	\ee
	Next taking the limit superior in both hand sides as $\epsilon$, using that $u^k$ (resp. $w^k$ ) is u.s.c. (resp. l.s.c.) and finally $j\in\tilde{\I}$ to obtain:
	\be\nn\ba
	(r-\lambda)(u^j(\bar{x})-w^j(\bar{y}))&\leq C\sum_{k\in\I^{-j}}(u^k(\bar{x})-w^k(\bar{y}))^+,\\ &\leq (m-1)C(u^j(\bar{x})-w^j(\bar{y})).
	\ea\ee
	which is contradictory since $u^j(\bar{x})-w^j(\bar{y})>0$ and $r-\lambda>mC.$ Thus for any $i\in\I$, $u^i\leq w^i$\\ \\
	\bf{Step 2:} The general case.\\
	For arbitrary $\lambda\in\R$, if $(u^i)_{i\in\I}$ (resp. $(v^i)_{i\in\I}$ ) is a subsolution (resp. supersolution) of $\eqref{PDE}$ then $\tilde{u}^{j}(x)=e^{\lambda t} u^{j}(x)$ and $\tilde{v}^{j}(x)=e^{\lambda t} v^{j}(x)$ is a subsolution (resp. supersolution) of the following system of variational inequalities with oblique reflection: $\forall i \in\I$
	\be\label{PDEtilde}
	\ba
	\min\biggr\{\tilde{v}^i(x)- &\max\limits_{j\in{\I}^{-i}}(-e^{\lambda t}g_{ij}(x)+\tilde{v}^j(x)),r\tilde{v}^i(x)-\cL \tilde{v}^i(x)\\ &-e^{\lambda t}f_i(x,(e^{-\lambda t}\tilde{v}^i(x))_{i\in\I},e^{-\lambda t}\sigma^\top(x).D_x\tilde{v}^i(x))\biggl\}=0,
	\ea
	\ee
	Let $\alpha$ be a positive constant. By choosing $r=\alpha+\lambda$, with $\lambda$ small enough we deduce that the function $F_{i}$ defined as follows
	\be\nn
	F_{i}(x,(v^k)_{k\in\I},z)=-\lambda v^i+e^{\lambda t}f_i(x,e^{-\lambda t}(v^k)_{k\in\I},e^{-\lambda t}z),\qquad \forall i\in\I.
	\ee
	satisfies condition $\eqref{mono}$. Thanks to the result stated in Step $1$, we obtain $\tilde{u}^i\leq \tilde{v}^i$, $i\in\I$ from which it follows that $u^i\leq v^i$, for any $i\in\I$, which is the desired result.
\end{proof}
As a by-product of the previous Proposition $\ref{compresult}$, we classically obtain:
\begin{corollary}\label{corol}
	The system of variational inequalities with interconnected obstacles $\eqref{PDE}$ has at most one solution in the class $\Pi^g$. And if the solution in $\Pi^g$ exists, it is necessarily continuous.
\end{corollary}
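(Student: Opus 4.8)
The whole statement will be read off from the comparison principle of Proposition~\ref{compresult}, so that no new estimate is needed. I would first settle \emph{continuity}. Let $(v^1,\dots,v^m)\in\Pi^g$ be a solution of \eqref{PDE}, understood --- as is customary when a solution is produced by approximation, cf.\ the RBSDE construction of Theorem~\ref{therem31} --- through its semicontinuous envelopes; for each $i\in\I$ write $v^{i,*}$ for the u.s.c.\ envelope of $v^i$ and $v^i_*$ for the l.s.c.\ one. Both envelope families still satisfy a polynomial growth bound, so they belong to $\Pi^g$, and the key point is that $(v^{i,*})_{i\in\I}$ is a family of u.s.c.\ subsolutions of \eqref{PDE} while $(v^i_*)_{i\in\I}$ is a family of l.s.c.\ supersolutions. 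For the coupled interior term this relies on the monotonicity hypothesis \textbf{[H2]}-(iii): along a maximizing (resp.\ minimizing) sequence it lets one bound $f_i$ in terms of $f_i$ evaluated at the off-diagonal envelopes, which, together with the continuity of $f_i$ in $x$ and its Lipschitz continuity in $(\vec{y},z)$, is exactly what is needed to pass to the limit. For the obstacle term one uses that the $g_{ij}$ are continuous (\textbf{[H3]}-(i)), so that a finite maximum of u.s.c.\ (resp.\ l.s.c.)\ functions is again u.s.c.\ (resp.\ l.s.c.). Proposition~\ref{compresult} then gives $v^{i,*}\le v^i_*$ on $\R^k$; since trivially $v^i_*\le v^i\le v^{i,*}$, we get $v^i_*=v^i=v^{i,*}$, i.e.\ each $v^i$ is continuous.

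\emph{Uniqueness} is then immediate: if $(u^1,\dots,u^m)$ and $(v^1,\dots,v^m)$ are both solutions of \eqref{PDE} in $\Pi^g$, they are continuous by the previous paragraph, hence each is simultaneously a u.s.c.\ viscosity subsolution and an l.s.c.\ viscosity supersolution of \eqref{PDE} of polynomial growth. Applying Proposition~\ref{compresult} with $(u^i)_{i\in\I}$ as subsolution and $(v^i)_{i\in\I}$ as supersolution gives $u^i\le v^i$ for every $i\in\I$; interchanging the roles of the two families gives the reverse inequality. Hence $u^i\equiv v^i$ for all $i\in\I$.

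The bulk of the analysis has already been carried out in Proposition~\ref{compresult}, so I do not expect a real obstacle here; the only delicate point is the passage to the semicontinuous envelopes in the coupled system, which is classical and hinges on the monotonicity of the generators $f_i$ in the off-diagonal $y$-variables.
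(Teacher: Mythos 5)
Your argument is correct and is exactly the classical deduction the paper has in mind: the paper gives no written proof of Corollary~\ref{corol}, stating only that it follows ``classically'' from Proposition~\ref{compresult}, and your two applications of the comparison principle (first to the u.s.c.\ and l.s.c.\ envelopes of a single $\Pi^g$ solution to force $v^{i,*}=v^i_*$ and hence continuity, then to two solutions in both orders to get uniqueness) are precisely that standard route. The point you flag as delicate --- that the envelopes remain sub/supersolutions of the coupled system thanks to the monotonicity \textbf{[H2]}-(iii) and the continuity of the $g_{ij}$ --- is the same mechanism the paper deploys explicitly in the proof of Theorem~\ref{Theo_Ex_uni}, so nothing is missing.
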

%%%%%%%%%%%%%%%%%%%%%%%%%%%%%%%%%%%%%%%%%%%%%%%%%%%%%%%%%%%%%
\section{Existence of the viscosity solution of the PDE $\eqref{PDE}$}
%%%%%%%%%%%%%%%%%%%%%%%%%%%%%%%%%%%%%%%%%%%%%%%%%%%%%%%%%%%%%
\begin{theorem}\label{Theo_Ex_uni}
	Assume that \bf{[H2]} and \bf{[H3]} hold. Then the system of variational inequalities with interconnected obstacles $\eqref{PDE}$ has a unique continuous solution $(v^i)_{i\in I}$ in the class $\Pi^g$.
\end{theorem}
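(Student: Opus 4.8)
The uniqueness part, and the continuity of any solution lying in $\Pi^g$, is already contained in Corollary \ref{corol} (itself a consequence of the comparison principle), so the real task is to produce a continuous viscosity solution in $\Pi^g$. The plan is to extract this solution from the probabilistic representation of Theorem \ref{therem31} and then to promote its regularity from lower semicontinuous to continuous by invoking the comparison principle of Proposition \ref{compresult}. Since the present statement drops hypothesis \textbf{[H5]}, which Theorem \ref{therem31} requires, the argument should open with a reduction showing that \textbf{[H5]} may be assumed without loss of generality: the natural device is the exponential reweighting $\widetilde Y^{i}_t:=e^{(\beta-r)t}Y^{i,x}_t$ with $\beta$ large, which converts \eqref{1RBSDE} into a system of the same form but discounted at rate $\beta>C_q$, at the price of time-inhomogeneous yet still Lipschitz and monotone data, and which leaves the initial value $Y^{i,x}_0=v^i(x)$ untouched. (In the \textbf{[H5]}-free regime one could instead bypass the representation and appeal to Perron's method, which here needs only Proposition \ref{compresult} together with an explicit pair of $\Pi^g$ sub- and super-solutions built from powers of $|x|$; but that route abandons the Feynman--Kac link that is the paper's guiding thread.)

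Granting \textbf{[H5]}, Theorem \ref{therem31} supplies l.s.c. functions $v^i\in\Pi^g$ with $Y^{i,x}_0=v^i(x)$, obtained as the pointwise non-decreasing limit $v^{i,n}\uparrow v^i$ of the functions $v^{i,n}$ attached by Theorem \ref{theorem_appendix} to the penalised systems \eqref{systemappro}; each $v^{i,n}$ is a \emph{continuous} viscosity solution of the scalar obstacle problem with frozen barrier $\Phi^{i,n-1}(x):=\max_{j\in\I^{-i}}(-g_{ij}(x)+v^{j,n-1}(x))$ and generator $f_i(x,v^{1,n-1},\dots,v^{i,n},\dots,v^{m,n-1},\sigma^\top D_xv^{i,n})$, and one has $\underline v\le v^{i,n}\le\bar v$ with $\underline v,\bar v\in\Pi^g$. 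I would then pass to the limit by the half-relaxed limits of Barles--Perthame: put $w^i:=\limsup{}^{*}v^{i,n}$, which is u.s.c., and observe that, because the sequence is monotone and $v^i$ is l.s.c., one has $\liminf{}_{*}v^{i,n}=v^i$, that $v^i\le w^i$, and that both are of polynomial growth. The obstacle inequality $v^i(x)\ge\max_{j\in\I^{-i}}(-g_{ij}(x)+v^j(x))$ passes to the limit \emph{pointwise}, with no test functions; and a stability argument --- using crucially the monotonicity \textbf{[H2]}(iii) of $f_i$ in the off-diagonal variables to control the frozen terms $v^{j,n-1}(x_n)$, which are known to converge to $v^j$ only from below --- shows that $(v^i)_{i\in\I}$ is a viscosity supersolution and $(w^i)_{i\in\I}$ a viscosity subsolution of the limiting system, which one verifies is precisely \eqref{PDE} (the frozen barriers $\Phi^{i,n-1}$ converge monotonically to $\max_j(-g_{ij}+v^j)$ and, in the upper relaxed limit, to $\max_j(-g_{ij}+w^j)$).

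To finish, apply Proposition \ref{compresult} to the subsolution $(w^i)$ and the supersolution $(v^i)$, both in $\Pi^g$: this yields $w^i\le v^i$ for every $i$, and combined with $v^i\le w^i$ it gives $v^i=w^i$. Hence each $v^i$ is continuous and is at once a viscosity sub- and supersolution of \eqref{PDE}, i.e. $(v^1,\dots,v^m)$ is a viscosity solution; uniqueness in $\Pi^g$ is then Corollary \ref{corol}, which completes the proof.

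The two points I expect to resist are: first, the reduction removing \textbf{[H5]} --- one must check that the reweighting (or, for the Perron alternative, the explicit sub/super-solutions) preserves every structural hypothesis, in particular \textbf{[H2]}(ii)-(iii) and the non-free-loop condition in \textbf{[H3]}, while still returning the deterministic value $v^i(x)$; and second, the stability step, where the interaction between the monotone convergence of the frozen barriers and the relaxed-limit procedure for the obstacle term has to be treated carefully, since it is exactly there that \textbf{[H2]}(iii) is indispensable and that one confirms the limit equations are \eqref{PDE} itself rather than a relaxation of it.
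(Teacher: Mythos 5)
Your proposal follows essentially the same route as the paper: take the l.s.c. functions $v^i=\lim_n\nearrow v^{i,n}$ supplied by Theorem \ref{therem31} (with each $v^{i,n}$ a continuous viscosity solution of the frozen system \eqref{PDEnn} via Theorem \ref{theorem_appendix}), pass to the limit by the half-relaxed limits of Lemma 6.1 in \cite{CIL} --- using \textbf{[H2]}(iii) exactly as you indicate to handle the terms $v^{j,n_j-1}(x_j)$, which converge only from below --- so that $v^i=v^i_*$ is a supersolution and $v^{i,*}={\limsup}^{*}v^{i,n}$ a subsolution, and then conclude continuity and uniqueness from the comparison result (Proposition \ref{compresult} and Corollary \ref{corol}). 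Your observation about \textbf{[H5]} is apt: the theorem's statement omits it while its proof relies on Theorem \ref{therem31}, which requires it; the paper simply treats \textbf{[H5]} as a standing assumption introduced in Section 2 and does not carry out the reweighting reduction you sketch.
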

\begin{proof}
	Let $(v^i)_{i\in I}$ be the functions constructed in Theorem $\ref{therem31}$ and associated with the solution of the system of RBSDEs with interconnected obstacles $\eqref{1RBSDE}$ (the solution of this system of RBSDEs exists and is unique). The functions
	$v^i,$ $i\in I$ belong to $\Pi^g$ and are thus locally bounded. So we have to show that they
	are viscosity solutions of system $\eqref{PDE}$. \\
	
	Let us show that $(v^i)_{i\in I}$ is a viscosity supersolution of $\eqref{PDE}$. First recall that by Theorem $\ref{therem31}$, for any $i\in I$ $v^i$ is l.s.c., i.e. $$v^i(x)=v^i_*(x):=\underset{x'\to x}{\underline{\lim}}v^i(x').$$
	By construction and for any $i\in I$ it holds
	\be\label{vintovi}
	v^i = \lim_{n \to+\infty}\nearrow v^{i,n};
	\ee
	where $v^{i,n}$ , for $n\geq1$, is defined in $\eqref{YinVin}$. By Theorem $\ref{theorem_appendix}$ (see Appendix A ), $v^{i,n}$ is a viscosity solution of the following variational inequality:
	\be\label{PDEnn}
	\ba
	\min\biggr\{&v^{i,n}(x)- \max\limits_{j\in{\I}^{-i}}\{-g_{ij}(x)+v^{j,n}(x)\},rv^{i,n}(x)-\cL v^{i,n}(x)\\ &-f_i\big(x,(v^{1,n-1},...,v^{i-1,n-1},v^{i,n},v^{i+1,n-1},...,v^{m,n-1})(x),\sigma^\top(x).D_xv^{i,n}(x)\big)\biggl\}=0,
	\ea
	\ee
	Now let us fix $i\in I$, let $x\in\R^k$ and $(q, X) \in J^{2,-} v^{i}(x).$ By $\eqref{vintovi}$ and Lemma $6.1$ in $\cite{CIL}$, there exist sequences:
	$$
	n_{j} \rightarrow+\infty, \quad x_{j} \rightarrow x, \quad\left(q_{j}, X_{j}\right) \in J^{2,-} v^{i}_{n_{j}}\left(x_{j}\right)
	$$
	such that
	$$
	\left(v^{i,n_j}(x_j),q_{j}, X_{j}\right) \rightarrow(v^i(x),q, X).
	$$
	Next from the viscosity supersolution property for $v^{i,n}$ we have:
	\be\nn
	\ba
	 &r\bar{v}^{i,n_j}(x_j)-b^\top(x_j).q_j-\frac{1}{2}\operatorname{Tr}[(\sigma\sigma^\top)(x_j).X_j]\\&-f_i\big(x,(v^{1,n_j-1},...,v^{i-1,n_j-1},v^{i,n_j},v^{i+1,n_j-1},...,v^{m,n_j-1})(x_j),\sigma^\top(x_j).q_j\big)\geq0,
	\ea
	\ee
	which implies that:
	\be\label{supersol}
	\ba
	&r\bar{v}^i(x)-b^\top(x_j).q_j-\frac{1}{2}\operatorname{Tr}[(\sigma\sigma^\top)(x).X]\\&\geq \underset{j\to+\infty}{\underline{\lim}}f_i\big(x,(v^{1,n_j-1},...,v^{i-1,n_j-1},v^{i,n_j},v^{i+1,n_j-1},...,v^{m,n_j-1})(x_j),\sigma^\top(x_j).q_j\big),\\ & = \underset{l\to+\infty}{\underline{\lim}}f_i\big(x,(v^{1,n_{j_l}-1},...,v^{i-1,n_{j_l}-1},v^{i,n_{j_l}},v^{i+1,n_{j_l}-1},...,v^{m,n_{j_l}-1})(x_{j_l}),\sigma^\top(x_{j_l}).q_{j_l}\big),
	\ea
	\ee
	for a subsequence $(j_l)_{l\geq0}$. Using the uniform polynomial growth of $v^{i,n},$ $k\in\I$ there exists a subsequence of $(j_l)_{l\geq0}$ which we still denote $(j_l)_{l\geq0}$ such that for any $k\in\I^{-i}$ the sequence $(v^{k,n_{j_l}-1}(x_{j_l}))_{l\geq0}$ is convergent. Next, using that $v^{k,n}$ is continuous, $v^{k,n}\leq v^{k,n+1}$ and due to the l.s.c. property of $v^{k}$ then for any $x\in\R^k$ and $k\in\I$ (see e.g. $\cite{B}$, p. $91$),
	$$v^k(x)=v^k_*(x)=\underset{\underset{n\to +\infty}{x'\to x}}{\underline{\lim}}v^{k,n}(x').$$
	which yields, for any $k\in\I^{-i}$,
	$$\underset{l\to+\infty}{\lim}\;v^{k,n_{j_l}-1}(x_{j_l})\geq v^k(x).$$
	Going back to $\eqref{supersol}$ and since $f_i$ is continuous and for any $k\in\I^{-i}$, the mapping $y^k \mapsto f_i (x,y^1,...,y^m ,z)$ is non decreasing when all the other variables $(x,(y_j)_{j\in\I})$ are fixed from which we deduce
	\be
	rv^i(x)-b^\top(x).q-\frac{1}{2}\operatorname{Tr}[(\sigma\sigma^\top)(x).X]- f_i(x,(v^k(x))_{k\in\I},\sigma^\top(x).q)\geq0.
	\ee
	On the other hand we know that $v^{i}(x)\geq \max\limits_{j\in{\I}^{-i}}\{v^{j}(x)-g_{ij}(x)\}$, then $v^i$ is a viscosity supersolution of $\eqref{PDE}$. Finally as $i$ is arbitrary in $\I$ then the $m$-tuple $(v^1,...,v^m)$ is a viscosity supersolution for the system of variational inequalities $\eqref{PDE}$.\\
	
	We now show that $(v^{i,*}:=\underset{x'\to x}{\overline{\lim}}v^{i}(x'))_{i\in\I}$ is a subsolution of $\eqref{PDE}$. Since $v^{i,n}\nearrow v^i$ and $v^{i,n}$ is continuous then we have (see e.g. $\cite{B}$, p. $91$)
	\be\label{subsol}
	v^{i,*}(x)=\underset{n\to +\infty}{\lim}{\sup}^*v^{i,n}(x)=\underset{\underset{n\to +\infty}{x'\to x}}{\overline{\lim}}v^{i,n}(x').
	\ee
	From the construction of $v^{i,n}$ in $\eqref{YinVin}$, for any $i\in\I$ and $n\geq0$ we deduce that $$v^{i,n}(x)\geq \max\limits_{j\in{\I}^{-i}}\{v^{j,n}(x)-g_{ij}(x)\},$$
	which implies in taking the limit, $\forall i\in\I$ and $x\in\R^k$
	\be\label{condi1}
	v^{i,*}(x)\geq \max\limits_{j\in{\I}^{-i}}\{v^{j,*}(x)-g_{ij}(x)\}.
	\ee
	Let us now fix $i\in\I$ and let $x\in\R^k$ be such that
	\be\label{barierpos}
	v^{i,*}(x)-\max\limits_{j\in{\I}^{-i}}\{v^{j,*}(x)-g_{ij}(x)\}>0.
	\ee
	Let $(q, X) \in \bar{J}^{2,+} v^{i,*}(x).$ By $\eqref{subsol}$ and Lemma $6.1$ in $\cite{CIL}$, there exist sequences:
	$$
	n_{j} \rightarrow+\infty, \quad x_{j} \rightarrow x, \quad\left(q_{j}, X_{j}\right) \in J^{2,+} v^{i,n_{j}}\left(x_{j}\right)
	$$
	such that
	$$
	\left(v^{i,n_j}(x_j),q_{j}, X_{j}\right) \rightarrow(v^{i,*}(x),q, X).
	$$
	Next from the viscosity supersolution property for $v^{i,n}$ (see $\eqref{PDEnn}$) for any $j\geq0$ we have:
	\be\label{pdenj}
	\ba
	\min\biggr\{&v^{i,n_j}(x_j)- \max\limits_{l\in{\I}^{-i}}\{-g_{il}(x_j)+v^{i,n_j-1}(x)\},rv^{i,n_j}(x_j)+b^\top(x_j).q_j+\frac{1}{2}\operatorname{Tr}[(\sigma\sigma^\top)(x_j).X_j]\\ &-f_i\big(x_j,(v^{1,n_j-1},...,v^{i-1,n_j-1},v^{i,n_j},v^{i+1,n_j-1},...,v^{m,n_j-1})(x_j),\sigma^\top(x_j).q_j\big)\biggl\}\leq0.
	\ea
	\ee
	Next the definition of $v^{l,*}$ implies that
	$$v^{l,*}(x)\geq\underset{n\to +\infty}{\lim\,\sup}^*v^{l,n_j}(x_j),$$
	therefore by $\eqref{barierpos}$, there exists $j_0\geq 0$, such that if $j\geq j_0$ we have
	$$v^{i,n_j}(x_j)>\max\limits_{l\in{\I}^{-i}}\{-g_{il}(x_j)+v^{i,n_j-1}(x_j)\}.$$
	Then $\eqref{pdenj}$ implies that, for any $j\geq j_0$
	\be\nn
	\ba
	&rv^{i,n_j}(x_j)-b^\top(x_j).q_j-\frac{1}{2}\operatorname{Tr}[(\sigma\sigma^\top)(x_j).X_j]\\ &-f_i\big(x_j,(v^{1,n_j-1},...,v^{i-1,n_j-1},v^{i,n_j},v^{i+1,n_j-1},...,v^{m,n_j-1})(x_j),\sigma^\top(x_j).q_j\big)\leq0.
	\ea
	\ee
	Which implies that
	\be\nn
	\ba
	&rv^{i,*}(x)-b^\top(x).q-\frac{1}{2}\operatorname{Tr}[(\sigma\sigma^\top)(x).X]\\ &\leq \underset{j\to +\infty}{\overline{\lim}} f_i\big(x_j,(v^{1,n_j-1},...,v^{i-1,n_j-1},v^{i,n_j},v^{i+1,n_j-1},...,v^{m,n_j-1})(x_j),\sigma^\top(x_j).q_j\big).
	\ea
	\ee
	By the same argument as above (working with an appropriate subsequence, see $\eqref{supersol}$) we deduce that
	\be
	rv^{i,*}(x)-b^\top(x).q-\frac{1}{2}\operatorname{Tr}[(\sigma\sigma^\top)(x).X]- f_i(x,(v^{k,*}(x))_{k\in\I},\sigma^\top(x).q)\leq0.
	\ee
	which, in combination with $\eqref{condi1}$ and since $i$ is arbitrary, means that $(v^i)_{i\in\I}$ is a viscosity subsolution for $\eqref{PDE}$. Thus the $m$-tuple $(v^i)_{i\in\I}$ is a solution for $\eqref{PDE}$ and Corollary $\ref{corol}$ implies that it is continuous and unique.
\end{proof}
\section{Conclusion}
In this paper, we studied a  system of $m$ variational inequalities with interconnected obstacles in infinite horizon \eqref{ourPDE}, associated to optimal multi-modes switching problems, for which we proved the existence of a unique continuous solution in viscosity sense which is the main result of the paper. The proof of that latter strongly relies on the link  between the systems of variational inequalities and reflected backward stochastic differential equations (RBSDEs) with oblique reflection, which we characterized through a Feynman-Kac's formula. The main feature of the system of RBSDEs considered in the paper is that its components are interconnected through both the generators and the obstacles.

Let us comment on some extensions and future work. An interesting open problem would be to treat the general case by only assuming that the functions $f_i$, $i= 1,...,m$ are Lipschitz in $(y^1,\dots, y^m, z^i)$. This would be a challenging problem, since getting rid of the monotonicity assumption \bf{[H2]}(iii) will raise some difficulties related on the one hand to the obtention of the comparison of sub-solutions and super-solutions of system \eqref{ourPDE}. On the other hand there will be a lack of regularity of the functions $(v^i)_{i\in\I}$ as pointed out in the introduction.\\
%which are obtained from system of RBSDEs \eqref{RBSDE-Markov-framework} via Feynman-Kac's formula \eqref{Intro-Feynman-Kac}.\\
Another open interesting problem is the study of infinite horizon RBSDEs with interconnected bilateral obstacles in a Markovian framework and related PDEs with bilateral interconnected obstacles, which are connected with zero-sum stochastic switching games.
It should be noted that this paper is the first step toward the study of these problems.

\begin{appendices}
	\section{Auxiliary Results on Viscosity Solutions of Systems of PDEs}\label{1st-extra-result}
	Let $x\in\R^k$ and let $X^x$ be the solution of the standard SDE $\eqref{sde}$.
	
	Now we are going to introduce the notion of a RBSDE without oblique reflection. Let us introduce the $m$ triplets of processes $((\bar{Y}^{i,x},\bar{Z}^{i,x},\bar{K}^{i,x}))_{i\in\I}$ solution of the following RBSDE: for any $i\in\I$,
	\begin{equation}\label{baRBSDEs}
	\begin{cases}
	\bar{Y}^{i,x}\in\S_r^2,\bar{Z}^{i,x}\in\M_r^2 \mbox{ and } \bar{K}^{i,x}\in\K_r^2;\\
	\begin{aligned}
	e^{-r t}\bar{Y}^{i,x}_t = \int_{t}^{+\infty} &e^{-r s}\bar{f}_i(X^{x}_s,\bar{Y}^{i,x}_s,\bar{Z}^{i,x}_s)ds+ \bar{K}^{i,x}_\infty-\bar{K}^{i,x}_t\\ &-  \int_{t}^{+\infty}e^{-r s}\bar{Z}^{i,x}_sdB_s;
	\end{aligned}\\
	\lim\limits_{t \to+\infty} e^{-r t}\bar{Y}^{i,x}_t=0,\\
	\forall t\geq0, \quad e^{-r t}\bar{Y}^{i,x}_t \geq e^{-r t}\phi(X^{x}_t),\\
	\mbox{ and }
	\int_{0}^{+\infty} e^{-r s}(\bar{Y}^{i,x}_s-\phi(X^{x}_s))d\bar{K}^{i,x}_s = 0,
	\end{cases}
	\end{equation}
	where $\bar{f}_i(x,y,z):\R^k\times\R\times\R^d\mapsto \R$ verifies $\bf{[H2]}$-$(i),(ii)$ and $\phi(x):\R^k\mapsto \R$ is continuous, belongs to $\mathcal{C}^{2}(\R^{k})$, of polynomial growth and satisfies \\
	\bf{[HA]}:$(i)$ For any $x\in\R^{k}$, $\cL \phi(x)\geq0$.\\
	$(ii)$ For any $i\in\mathcal{I}$ the function $\bar{h}_i(x):=\underset{t \to+\infty}{\lim}\bar{Y}^{i,x}_t$ belongs to $\Pi^g$ and satisfies $$\bar{h}_i(x)\geq \phi(x_\infty); \mbox{ where } \phi(x_\infty):=\underset{t \to+\infty}{\lim}\phi(x_t).$$
	\begin{theorem}\label{theorem_appendix}
		For any $x\in\R^k$, the RBSDE $\eqref{baRBSDEs}$ admits a unique solution $((\bar{Y}^{i,x},\bar{Z}^{i,x},\bar{K}^{i,x}))_{i\in\I}$. Moreover, there exists deterministic continuous functions $(\bar{v}^i)_{i\in \I}$ of polynomial growth, defined on $\R^k$, $\R$-valued, such that:
		\be
		\forall i\in\I,\; \forall x \in\R^k,\; \bar{Y}^{i,x}_0=\bar{v}^i(x).
		\ee
		which is moreover solution in viscosity sense of the following system of $m$ partial differential equations: For $i\in\I$ and $x\in\R^k$,
		\be\label{PDE_appendix}
		\ba
		\min\biggr\{\bar{v}^i(x)- &\phi(x),r\bar{v}^i(x)-\cL \bar{v}^i(x)\\ &-\bar{f}_i(x,\bar{v}^i(x),\sigma^\top(x).D_x\bar{v}^i(x))\biggl\}=0,
		\ea
		\ee
		
	\end{theorem}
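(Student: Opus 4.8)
Since the generator $\bar f_i$ of the $i$-th equation in \eqref{baRBSDEs} depends only on $(\bar Y^{i,x},\bar Z^{i,x})$ and the obstacle $\phi(X^x_\cdot)$ carries no index, the system decouples into $m$ independent scalar infinite-horizon reflected BSDEs with a lower obstacle, so it suffices to treat one of them. The plan is to construct it by penalization: for $n\ge1$ let $(\bar Y^{i,x,n},\bar Z^{i,x,n})$ solve the non-reflected infinite-horizon BSDE
\be\nn
e^{-rt}\bar Y^{i,x,n}_t=\int_t^{+\infty}e^{-rs}\bar f_i(X^x_s,\bar Y^{i,x,n}_s,\bar Z^{i,x,n}_s)\,ds+n\int_t^{+\infty}e^{-rs}(\bar Y^{i,x,n}_s-\phi(X^x_s))^-\,ds-\int_t^{+\infty}e^{-rs}\bar Z^{i,x,n}_s\,dB_s,
\ee
which is well posed by Theorem $4.1$ of \cite{P99}: the penalized generator is Lipschitz in $z$ and, crucially, $y\mapsto n(y-\phi(X^x_s))^-$ is non-increasing, so the penalized generators satisfy a one-sided monotonicity condition in $y$ with a constant (the Lipschitz constant of $\bar f_i$) independent of $n$, which together with \bf{[H5]}, the polynomial growth of $\bar f_i(\cdot,0,\dots,0)$ and of $\phi$, and $r$ large enough gives uniform well-posedness. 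Setting $\bar K^{i,x,n}_t:=n\int_0^te^{-rs}(\bar Y^{i,x,n}_s-\phi(X^x_s))^-\,ds$, one shows by It\^o's formula on $e^{-rt}|\bar Y^{i,x,n}_t|^2$ — along the lines of \eqref{estimateyinx}--\eqref{estimatemarkovYZK} — that $\E[\sup_{t\ge0}|\bar Y^{i,x,n}_t|^2]$, $\E[\int_0^{+\infty}e^{-rs}|\bar Z^{i,x,n}_s|^2ds]$ and $\E[(\bar K^{i,x,n}_\infty)^2]$ are bounded uniformly in $n$, and by comparison that $(\bar Y^{i,x,n})_n$ is non-decreasing. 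Passing to the limit (monotone convergence for $\bar Y$, Cauchy arguments in $\M_r^2$ for $\bar Z$ and in $\S_r^2$ for $\bar K$) yields a candidate $(\bar Y^{i,x},\bar Z^{i,x},\bar K^{i,x})$; the constraint $\bar Y^{i,x}_t\ge\phi(X^x_t)$ comes from the uniform bound on $\bar K^{i,x,n}_\infty$, and the Skorokhod minimality condition follows from the uniform convergences via Helly's theorem (\cite{KF}, p.~$370$), exactly as in the proof of Theorem \ref{therem31}. Uniqueness is obtained by It\^o's formula on the squared difference of two solutions and \bf{[H2]}-$(ii)$ with $r$ large, equivalently the comparison result of Appendix \ref{2nd-extra-result}.

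Next I would establish the deterministic representation. As $b,\sigma,\bar f_i,\phi$ are deterministic and the filtration is generated by $B$, the flow property of $X^x$ and uniqueness give $\bar Y^{i,x}_t=\bar v^i(X^x_t)$ for a deterministic function $\bar v^i$ with $\bar v^i(x)=\bar Y^{i,x}_0$; polynomial growth of $\bar v^i$ follows from the a priori bound on $\E[\sup_t|\bar Y^{i,x}_t|^2]$ combined with \eqref{estimat1} and \bf{[H5]}. Continuity of $\bar v^i$ in $x$ follows from the stability estimate \eqref{estimat3} for $X^x$ together with an It\^o/Gronwall estimate on $e^{-rt}|\bar Y^{i,x}_t-\bar Y^{i,x'}_t|^2$ (again using $r$ large), which yields a local modulus of continuity of polynomial type.

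It then remains to identify $\bar v^i$ as a viscosity solution of \eqref{PDE_appendix}, again via the penalization at the PDE level: by the Feynman--Kac representation for infinite-horizon BSDEs (Theorem $5.2$ of \cite{P99}), the deterministic continuous function $\bar v^{i,n}$ with $\bar Y^{i,x,n}_0=\bar v^{i,n}(x)$ is a viscosity solution of
\be\nn
r\bar v^{i,n}(x)-\cL\bar v^{i,n}(x)-\bar f_i(x,\bar v^{i,n}(x),\sigma^\top(x).D_x\bar v^{i,n}(x))-n(\bar v^{i,n}(x)-\phi(x))^-=0,
\ee
and $\bar v^{i,n}\nearrow\bar v^i$ pointwise. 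Letting $n\to+\infty$ and using the stability of viscosity solutions under the half-relaxed limits $\limsup^*$ and $\liminf_*$ (\cite{CIL}, and Lemma $6.1$ there for the jet version), the penalty term $n(\bar v^{i,n}-\phi)^-$ forces $\bar v^i\ge\phi$ in the limit and vanishes on $\{\bar v^i>\phi\}$, while the remaining terms reproduce on test functions the two halves of the $\min$ in \eqref{PDE_appendix}; here \bf{[HA]}$(i)$, $\cL\phi\ge0$ (the analogue of \bf{[H3]}-(iii)), guarantees that $\phi$ behaves as a subsolution so the reflection acts only upward, and \bf{[HA]}$(ii)$ is used to control the terminal behaviour. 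Since $\bar v^i$ is already known to be continuous, $\bar v^i=\limsup^*\bar v^{i,n}=\liminf_*\bar v^{i,n}$ and it is a genuine viscosity solution; uniqueness in $\Pi^g$ then follows from the comparison argument of Proposition \ref{compresult} specialised to this decoupled case with a classical obstacle.

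The step I expect to be the main obstacle is the passage to the limit in the penalization carried out \emph{uniformly on the infinite time horizon}: on the probabilistic side, proving the $n$-uniform $L^2$-bound on $\bar K^{i,x,n}_\infty$ and deriving from it the minimality condition $\int_0^{+\infty}e^{-rs}(\bar Y^{i,x}_s-\phi(X^x_s))\,d\bar K^{i,x}_s=0$ requires carefully combining \bf{[H5]}, the polynomial growth of $\phi$ and $\bar f_i(\cdot,0,\dots,0)$, and the monotonicity of the penalization in $y$; on the analytic side, one must check that the half-relaxed limit of the penalized PDEs produces exactly the obstacle term and not a spurious contribution, which is precisely where the smoothness and sign condition \bf{[HA]}$(i)$ on $\phi$ enter.
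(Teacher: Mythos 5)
Your proposal is correct and follows essentially the same route as the paper: penalization of the (decoupled) scalar infinite-horizon reflected BSDEs, uniform a priori estimates in which the sign condition $\cL\phi\geq 0$ from \textbf{[HA]}(i) is the key ingredient, monotone passage to the limit with the Skorokhod condition recovered via Helly's theorem, Feynman--Kac for the penalized equations, and stability of viscosity solutions (Lemma $6.1$ of \cite{CIL}) in the limit. The only cosmetic differences are that the paper obtains the deterministic representation by passing to the limit in Pardoux's representation for the penalized (non-reflected) equations rather than by invoking the flow property directly, and that it proves continuity of $\bar{v}^i$ via a delicate splitting of the infinite time axis and a H\"older estimate on the obstacle increments, which your It\^o/Gronwall sketch compresses.
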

	\begin{proof}
		Existence and uniqueness of the $m$ triplets of processes $(\bar{Y}^{i,x},\bar{Z}^{i,x},\bar{K}^{i,x})_{i\in\I}$ of $\eqref{baRBSDEs}$ follow from
		Theorem $3.2$ in $\cite{HLW}$. \\
		Now we consider the penalized BSDE: for any $i\in\mathcal{I}$ and $n\geq1$
		\begin{equation}\label{pbsde}\ba
		e^{-r t}\bar{Y}^{i,x,n}_t = \int_{t}^{+\infty} &e^{-r s}[\bar{f}_i(X^{x}_s,\bar{Y}^{i,x,n}_s,\bar{Z}^{i,x,n}_s) +n(\bar{Y}^{i,x,n}_s-\phi(X^{x}_s))^-]\, ds \\ & -\int_{t}^{+\infty}e^{-r s}\bar{Z}^{i,x,n}_s \, dB_s, \qquad \forall t\geq0.
		\ea\end{equation}
		From the result of Chen $\cite{chen}$, for any $n$, BSDE $\eqref{pbsde}$ has a
		unique solution $(\bar{Y}^{i,x,n},\bar{Z}^{i,x,n})$ in the space $\S_r^2\times\M_r^2.$\\
		Next let us define
		$$\bar{K}^{i,x,n}_t=n\int_{0}^{t}e^{-r s}(\bar{Y}^{i,x,n}_s-\phi(X^{x}_s))^-\, ds.$$
		We now show that
		\begin{lemma}\label{estimatt2}
		\begin{itemize}
			\item[(i)] There exist a positive constant $C$, such that the following hold true: for any $i\in\I$ and $n\geq1$.
			\begin{equation}\label{Yi-phi-estime}
				\begin{aligned}
					\E&\biggl[\underset{t\geq0}{\sup}\,e^{-rt}[(\bar{Y}^{i,x,n}_s-\phi(X^{x}_s))^-]^2+n^2\int_{0}^{+\infty}e^{-rs}[(\bar{Y}^{i,x,n}_s-\phi(X^{x}_s))^-]^2ds\biggr]\\ &\leq C\E\biggl[\int_{0}^{+\infty}e^{-rs}\big(|\bar{f}_i(X^x_s,0,0)|^2+|\bar{Y}^{i,x,n}_s|^2+|\bar{Z}^{i,x,n}_s|^2\big)\,ds\biggr],
				\end{aligned}
			\end{equation}
			\item[(ii)] There exist a positive constant $C$ independent of $n$ such that,
			\be\label{estim}\E\biggl[\sup_{t\geq0}e^{-rt}|\bar{Y}_t^{i,x,n}|^2+\int_{0}^{+\infty}e^{-rs}|\bar{Z}^{i,x,n}_s|^2 \, ds+(\bar{K}^{i,x,n}_{\infty})^2\biggr]\leq C,\qquad\forall n\geq1 \mbox{ and } i\in\I.\ee
		\end{itemize}
	\end{lemma}
		\begin{proof}  $(i)$ Fix $x\in\R^k$, and for given $i\in\I$ and $n\geq1$, set
			\begin{equation}\label{yinphi}
				\Y^{i,x,n}_t:=\bar{Y}_t^{i,x,n}-\phi(X^{x}_t), \qquad t\in[0,+\infty).
			\end{equation}
			Next, for $n\geq1$ and $i\in\I$, equation $(\ref{pbsde})$ is equivalent to
			\be\label{dbaryin}\ba d\bar{Y}^{i,x,n}_s=-&[\bar{f}_i(X^x_s,\bar{Y}^{i,x,n}_s,\bar{Z}^{i,x,n}_s)+n(\Y^{i,x,n}_s)^--r \bar{Y}^{i,x,n}_s]ds\\ &+\bar{Z}^{i,x,n}_sdB_s,\ea\ee
			Now $\eqref{dbaryin}$ implies also that
			\be\nn
			\ba
			\Y^{i,x,n}_t = \bar{h}_i(X^{x}_\infty)-&\phi(X^{x}_t)+\int_{t}^{+\infty}  [\bar{f}_i(X^{x}_s,\bar{Y}^{i,x,n}_s,\bar{Z}^{i,x,n}_s) +n(\Y^{i,x,n}_s))^--r \bar{Y}^{i,x,n}_s]\, ds \\ & -\int_{t}^{+\infty}\bar{Z}^{i,x,n}_s \, dB_s, \qquad \forall t\geq0.
			\ea
			\ee
			Then taking the differential with respect to the time variable $t$, gives 
			\be\label{dYixn}\ba d\Y^{i,x,n}_t=&-\sum_{j=1}^{k}\frac{\partial\phi }{\partial x_{j}}(X^{x}_t)\sigma_j(X^{x}_t)dB_t -[\bar{f}_i(X^x_t,\bar{Y}^{i,x,n}_t,\bar{Z}^{i,x,n}_t)+n(\Y^{i,x,n}_t)^--r \bar{Y}^{i,x,n}_t]dt\\ &-\cL\phi(X^{x}_t)dt+\bar{Z}^{i,x,n}_tdB_t.\ea\ee %where we set $\sigma_j(X^{x}_t)dB_t:=\sum_l \sigma_{jl}(X^{x}_t)dB_t^l$ to simplify the notation.\\
			Applying It\^o-Tanaka's formula for $e^{-rt}[(\Y^{i,x,n}_t)^-]^2$ we obtain
			\begin{equation}\label{ito-tanaka}
			\ba
			 d(e^{-rt}[(\Y^{i,x,n}_t)^-]^2)=&-re^{-rt}[(\Y^{i,x,n}_t)^-]^2dt-2e^{-rt}(\Y^{i,x,n}_t)^-d\Y^{i,x,n}_t\\&+\frac{1}{2}dL_t^0(\Y^{i,x,n})+1_{\L_{i,x,n}}e^{-rt}(\bar{Z}^{i,x,n}_t)^2dt;
			\ea
			\end{equation}
		where $L^0(\Y^{i,x,n})$ denotes the local-time at zero of the semi-martingale $\Y^{i,x,n}$ and for $i\in\I$, $\L_{i,x,n}:=\{(s,\omega)\in[0,+\infty)\times\Omega, \mbox{ such that } \Y^{i,x,n}_s<0 \}.$ Noticing that
		the integral with respect to the local-time is nonnegative. Therefore for every $t\geq0$
        	\be\label{tanaka}
			\ba
			e^{-rt}&[(\Y^{i,x,n}_t)^-]^2+2n\int_{t}^{+\infty}e^{-rs}[(\Y^{i,x,n}_s)^-]^2ds+\int_{t}^{+\infty}1_{\L_{i,x,n}}e^{-rs}(\bar{Z}^{i,x,n}_s)^2ds \\ &\leq  2\int_{t}^{+\infty}e^{-rs}(\Y^{i,x,n}_t)^-\big[-\bar{f}_i(X^x_s,\bar{Y}^{i,x,n}_s,\bar{Z}^{i,x,n}_s)+r \bar{Y}^{i,x,n}_s-\cL\phi(X^{x}_s)\big]ds \\ & \qquad+r\int_{t}^{+\infty}e^{-rs}[(\Y^{i,x,n}_s)^-]^2ds+2M^{i,x,n}_{t,\infty};
			\ea
			\ee
			where we have defined 
			\be\label{Mixns} 
			M^{i,x,n}_{t,\infty}:=\int_{t}^{+\infty}e^{-rs}(\Y^{i,x,n}_t)^-\big(\bar{Z}^{i,x,n}_s-\sum_{j=1}^{k}\frac{\partial\phi }{\partial x_{j}}(X^{x}_s)\sigma_j(X^{x}_s)\big)dB_s.
			\ee
			Notice in particular that $(M^{i,x,n}_{0,t})_{t\geq0}$ is indeed a martingale. Then, taking expectation and from $\bf{[HA]}-(i)$ we get
			\be\label{Etanaka}
			\ba
			&\E\big[e^{-rt}[(\Y^{i,x,n}_t)^-]^2\big]+2n\E\big[\int_{t}^{+\infty}e^{-rs}[(\Y^{i,x,n}_s)^-]^2ds\big]+\E\big[\int_{t}^{+\infty}1_{\L_{i,x,n}}e^{-rs}|\bar{Z}^{i,x,n}_s|^2ds\big] \\ &\leq  2\E\big[\int_{t}^{+\infty}e^{-rs}(\Y^{i,x,n}_t)^-\big(|\bar{f}_i(X^x_s,\bar{Y}^{i,x,n}_s,\bar{Z}^{i,x,n}_s)|+r |\bar{Y}^{i,x,n}_s|\big)ds\big] +r\E\big[\int_{t}^{+\infty}e^{-rs}[(\Y^{i,x,n}_s)^-]^2ds\big]\\ &\leq  2C\E\big[\int_{t}^{+\infty}e^{-rs}(\Y^{i,x,n}_t)^-\big(|\bar{f}_i(X^x_s,0,0)|+ |\bar{Y}^{i,x,n}_s|+|\bar{Z}^{i,x,n}_s|\big)ds\big] +r\E\big[\int_{t}^{+\infty}e^{-rs}[(\Y^{i,x,n}_s)^-]^2ds\big]\\ &\leq \frac{1}{2}\E\big[\int_{t}^{+\infty}e^{-rs}\big(|\bar{f}_i(X^x_s,0,0)|^2+|\bar{Y}^{i,x,n}_s|^2+|\bar{Z}^{i,x,n}_s|^2\big)\,ds\big]+C\E\big[\int_{t}^{+\infty}e^{-rs}[(\Y^{i,x,n}_t)^-]^2\,ds\big];
			\ea
			\ee
			where $C$ is a constant independent of $n$ and which might hereafter vary from line to line.\\
			Now, applying Gronwall's inequality, it follows that
			\be\nn
			\E\big[e^{-rt}[(\Y^{i,x,n}_t)^-]^2\big]\leq C\E\big[\int_{0}^{+\infty}e^{-rs}\big(|\bar{f}_i(X^x_s,0,0)|^2+|\bar{Y}^{i,x,n}_s|^2+|\bar{Z}^{i,x,n}_s|^2\big)\,ds\big],
			\ee
			and
			\begin{equation*}
			\begin{aligned}
			n\E\big[\int_{0}^{+\infty}e^{-rs}[(\Y^{i,x,n}_s)^-]^2ds\big] &+\E\big[\int_{0}^{+\infty}1_{\L_{i,n}}e^{-rs}|\bar{Z}^{i,x,n}_s|^2ds\big]\\ &\leq
			C\E\big[\int_{0}^{+\infty}e^{-rs}\big(|\bar{f}_i(X^x_s,0,0)|^2+|\bar{Y}^{i,x,n}_s|^2+|\bar{Z}^{i,x,n}_s|^2\big)\,ds\big].
			\end{aligned}
			\end{equation*}
			Going back to $(\ref{tanaka})$ and applying Burkholder-Davis-Gundy's inequality, we obtain
			\begin{equation}\label{supBDG}
			\E\big[\underset{t\geq0}{\sup}\,e^{-rt}[(\Y^{i,x,n}_t)^-]^2\big]\leq\, C\E\big[\int_{0}^{+\infty}e^{-rs}\big(|\bar{f}_i(X^x_s,0,0)|^2+|\bar{Y}^{i,x,n}_s|^2+|\bar{Z}^{i,x,n}_s|^2\big)\,ds\big].
			\end{equation}
			On the other hand, from $\eqref{Etanaka}$, we deduce that,
			\be\nn
			\ba
			2n\E\big[\int_{0}^{+\infty}e^{-rs}[(\bar{Y}^{i,x,n}_s&-\phi(X^{x}_s))^-]^2ds\big]\leq n\E\big[\int_{0}^{+\infty}e^{-rs}[(\Y^{i,x,n}_s)^-]^2\,ds\big]\\ & +\frac{C}{n}\E\big[\int_{0}^{+\infty}e^{-rs}\big(|\bar{f}_i(X^x_s,0,0)|^2+|\bar{Y}^{i,x,n}_s|^2+|\bar{Z}^{i,x,n}_s|^2\big)\,ds\big].
			\ea
			\ee
			For $n$ large enough we finally deduce that
			\begin{equation*}
			\begin{aligned}
			n^2\E\big[\int_{0}^{+\infty}e^{-rs}[(\Y^{i,x,n}_s)^-]^2ds\big]\leq C\E\big[\int_{0}^{+\infty}e^{-rs}\big(|\bar{f}_i(X^x_s,0,0)|^2+|\bar{Y}^{i,x,n}_s|^2+|\bar{Z}^{i,x,n}_s|^2\big)\,ds\big].
			\end{aligned}
			\end{equation*}
		Next, thanks to $(i)$ we are able to prove $(ii)$. Applying It\^o's formula with $e^{-rt}|\bar{Y}_t^{i,x,n}|^2$, we obtain, for any $t\geq0$
		\begin{equation*}\label{ito_exp_bary}
		\begin{aligned}
		\E[e^{-rt}|\bar{Y}_t^{i,x,n}|^2]&+\E\big[\int_{t}^{+\infty}e^{-rs}(r|\bar{Y}_s^{i,x,n}|^2+|\bar{Z}_s^{i,x,n}|^2)ds\big]\\& = 2\E\big[\int_{t}^{+\infty}e^{-rs}\bar{Y}_s^{i,x,n}\big[\bar{f}_i(X^{x}_s,\bar{Y}^{i,x,n}_s,\bar{Z}_s^{i,x,n})+n(\Y^{i,x,n}_s)^-\big]ds \big]\\& \leq 2\E\big[\int_{t}^{+\infty}e^{-rs}|\bar{Y}^{i,x,n}_s|\big[|\bar{f}_i(X^{x}_s,0,0)|+C(|\bar{Y}^{i,x,n}_s|+|\bar{Z}_s^{i,x,n}|)\big]ds\big]\\ &\qquad +  2n\E\big[\int_{t}^{+\infty}e^{-rs}|\bar{Y}^{i,x,n}_s|(\Y^{i,x,n}_s)^-ds\big],\\ & \leq \E\big[\int_{t}^{+\infty}e^{-rs}\big[(2C^2+2C+\epsilon)|\bar{Y}_s^{i,x,n}|^2+|\bar{f}_i(X^x_s,0,0)|^2+\frac{1}{2}|\bar{Z}^{i,x,n}_s|^2\big]ds\big] \\ & \qquad+n^2\epsilon^{-1}\E\big[\int_{t}^{+\infty}e^{-rs}[(\Y^{i,x,n}_s)^-]^2ds\big].
		\end{aligned}
		\end{equation*}
		By the polynomial growth of $\bar{f}_i$ (see $\eqref{f_of_pg}$) and $\bf{[H5]}$, standard calculations yield
		\be\nn \E\biggl[\sup_{t\geq0}e^{-rt}|\bar{Y}_t^{i,x,n}|^2+\int_{0}^{+\infty}e^{-rs}|\bar{Z}_s^{i,x,n}|^2ds\biggr]\leq C.\ee
		And, from $\eqref{Yi-phi-estime}$, we deduce that
		\be\nn
		n^2\E\biggl[\int_{0}^{+\infty}e^{-rs}[(\Y^{i,x,n}_s)^-]^2ds\biggr]\leq C.
		\ee
		The proof of Lemma $\ref{estimatt2}$ is now complete.
      	\end{proof}
		Note that if we define $$\bar{f}^n_i(x,y,z):=\bar{f}_i(x,y,z)+n(y-\phi(x))^-,$$
		$$\bar{f}^n_i(x,y,z)\leq \bar{f}^{n+1}_i(x,y,z),$$
		and by the comparison Theorem (see e.g. $\cite{HLW}$) we have, for any $n\geq1$, $\bar{Y}^{i,x,n}\leq \bar{Y}^{i,x,n+1}$, then $\bar{Y}^{i,x,n}$ admits $\P$-a.s. a limit denoted $\bar{Y}^{i,x}$. Moreover by Fatou's lemma, we have
		\be\nn
		\E[ \sup_{t\geq0}e^{-r t}|\bar{Y}_t^{i,x}|^2]\le C.
		\ee
		It then follows by dominated convergence that
		\be\label{ynconverg}
		\lim_{n \rightarrow +\infty}\E\biggl[\int_0^{+\infty}e^{-r s}|\bar{Y}_s^{i,x,n}-\bar{Y}_s^{i,x}|^2 \, ds\biggr]=0.
		\ee
		Now let us use It\^o's formula with $e^{-r t}|\bar{Y}_t^{i,x,n}-\bar{Y}_t^{i,x,p}|^2$ and $\eqref{dbaryin}$. It yields that
		\begin{equation}\label{ynypbar}
		\begin{aligned}
		&e^{-r t}|\bar{Y}_t^{i,x,n}-\bar{Y}^{i,x,p}_t|^2+\int_{t}^{+\infty}e^{-r s}(r |\bar{Y}_s^{i,x,n}-\bar{Y}^{i,x,p}_s|^2+|\bar{Z}_s^{i,x,n}-\bar{Z}^{i,x,p}_s|^2)ds\\ &= 2\int_t^{+\infty}e^{-r s}(\bar{Y}_s^{i,x,n}-\bar{Y}^{i,x,p}_s) (\bar{f}_i(X^x_s,\bar{Y}^{i,x,n}_s,\bar{Z}^{i,x,n}_s)
		-\bar{f}_i(X^x_s,\bar{Y}^{i,x,p}_s,\bar{Z}^{i,x,p}_s))\, ds \\ &\quad +2\int_{t}^{+\infty}(\bar{Y}_s^{i,x,n}-\bar{Y}^{i,x,p}_s)(d\bar{K}_s^{i,x,n}-d\bar{K}_s^{i,x,p})\\&\quad-2\int_{t}^{+\infty}e^{-r s}(\bar{Y}_s^{i,x,n}-\bar{Y}^{i,x,p}_s)(\bar{Z}^{i,x,n}_s-\bar{Z}^{i,x,p}_s)\, dB_s,
		\end{aligned}
		\end{equation}
		Taking expectation in both sides of the last equality yields
		\be\label{itoznpbar}\ba
		\E[& e^{-r t}|\bar{Y}_t^{i,x,n}-\bar{Y}^{i,x,p}_t|^2]+\E\biggl[\int_{t}^{+\infty}e^{-r s}(r |\bar{Y}_s^{i,x,n}-\bar{Y}^{i,x,p}_s|^2+|\bar{Z}_s^{i,x,n}-\bar{Z}^{i,x,p}_s|^2)ds\biggr]\\ & \leq 2\E\biggl[\int_t^{+\infty}e^{-r s}|\bar{Y}_s^{i,x,n}-\bar{Y}^{i,x,p}_s ||\bar{f}_i(X^x_s,\bar{Y}^{i,x,n}_s,\bar{Z}^{i,x,n}_s)
		-\bar{f}_i(X^x_s,\bar{Y}^{i,x,p}_s,\bar{Z}^{i,x,p}_s)|\, ds\biggr]\\ & \quad +2\E\biggl[\int_{t}^{+\infty}(\Y^{i,x,n}_s)^-d\bar{K}_s^{i,x,p}\biggr]+2\E\biggl[\int_{t}^{+\infty}(\Y^{i,x,p}_s)^-d\bar{K}_s^{i,x,n}\biggr].
		\ea\ee
		Noting that
		\be\nn\ba
		 2\E\biggl[\int_{t}^{+\infty}&(\Y^{i,x,n}_s)^-d\bar{K}_s^{i,x,p}\biggr]+2\E\biggl[\int_{t}^{+\infty}(\Y^{i,x,p}_s)^-d\bar{K}_s^{i,x,n}\biggr]\\ &\leq 2\E\biggl[ \sup_{t\geq0}(\Y^{i,x,n}_s)^-(\bar{K}_{\infty}^{i,x,p}-\bar{K}_t^{i,x,p})\biggr]+2\E\biggl[ \sup_{t\geq0}(\Y^{i,x,p}_s)^-(\bar{K}_{\infty}^{i,x,n}-\bar{K}_t^{i,x,n})\biggr], \\ & \leq \E\biggl[\big( \sup_{t\geq0}(\Y^{i,x,n}_s)^-\big)^2\biggr]+\E\big[ (\bar{K}_{\infty}^{i,x,p}-\bar{K}_t^{i,x,p})^2\big]\\ & \quad+\E\biggl[\big( \sup_{t\geq0}(\Y^{i,x,p}_s)^-\big)^2\biggr]+\E\big[ (\bar{K}_{\infty}^{i,x,n}-\bar{K}_t^{i,x,n})^2\big].
		\ea\ee
		And from $\eqref{estim}$, we have
		\be\nn
		\E\left[\int_{0}^{+\infty}e^{-r s}[(\Y^{i,x,n}_s)^-]^2\, ds\right]\leq \frac{C}{n^2}.
		\ee
		Thus, using Fatou's lemma we deduce that
		\be\nn
		\E\left[\int_{0}^{+\infty}e^{-r s}(\bar{Y}^{i,x}_s-\phi(X^{x}_s))^-\,ds\right]\leq \underset{n\to +\infty}{\lim\inf}\,\E\left[\int_{0}^{+\infty}e^{-r s}(\Y^{i,x,n}_s)^-\,ds\right]=0.
		\ee
		This implies particularly that
		\be\nn
		\E\left[\int_{0}^{+\infty}e^{-r s}(\bar{Y}^{i,x}_s-\phi(X^{x}_s))^-\,ds\right]=0.
		\ee
		Note that since $\bar{Y}^{i,x}$ (resp. $\phi(x)$) is a c\`adl\`ag process (resp. continuous) we deduce that $e^{-r s}(\bar{Y}^{i,x}_t-\phi(X^{x}_t))^-=0$ for $t\geq0$ and thus $e^{-r t}\bar{Y}^{i,x}_t \geq e^{-r t}\phi(X^{x}_t),\;\forall t\geq0$. Using now
		Dini's theorem and the Lebesgue dominated convergence one to obtain:
		\be\label{ynvarphi}
		\underset{n\to +\infty}{\lim}\mathbb{E}\left[\big(\sup_{t\geq0}(\Y^{i,x,n}_s)^-\big)^2\right]=0
		\ee \\
		Setting, in $\eqref{itoznpbar} $, $t=0$ and choosing $r \geq C^2+C+1$, from $(\ref{estim})$, $(\ref{ynconverg}) $ and $\eqref{ynvarphi}$, we get
		\begin{equation}\label{barZnp}
		\underset{n,\, p \to +\infty}{\lim}\mathbb{E}\left[\int_{0}^{+\infty}e^{-r s}|\bar{Z}_s^{i,x,n}-\bar{Z}_s^{i,x,p}|^2 \, \mathrm{d}{s}\right]=0.
		\end{equation}
		Consequently, the sequence $(\bar{Z}^{i,x,n})_{n\geq1}$ converges in $\M_r^{2}$ to a process which we denote $\bar{Z}^{i,x}$.\\
		Now, going back to $\eqref{ynypbar}$, we deduce that
		\begin{equation}\label{BDG}
		\begin{aligned}
		\E\biggl[\underset{t\geq0}{\sup}&\,e^{-r t}|\bar{Y}_t^{i,x,n}-\bar{Y}_t^{i,x,p}|^2\biggr]\\ &\leq 2\E\biggl[\int_{t}^{+\infty}e^{-r s}|\bar{Y}_s^{i,x,n}-\bar{Y}_s^{i,x,p}||\bar{f}_i(X_s^x,\bar{Y}_s^{i,x,n},\bar{Z}_s^{i,x,n})-\bar{f}_i(X_s^x,\bar{Y}^{i,x,p}_s,\bar{Z}_s^{i,x,p})|ds\biggr]\\ &\qquad +2\int_{t}^{+\infty}(\bar{Y}_s^{i,x,n}-\bar{Y}^{i,x,p}_s)(d\bar{K}_s^{i,x,n}-d\bar{K}_s^{i,x,p})\\ & \qquad+2\E\biggl[\underset{t\geq0}{\sup}\,\bigg|\int_{t}^{+\infty}e^{-r s}(\bar{Y}_s^{i,x,n}-\bar{Y}_s^{i,x,p})(\bar{Z}_s^{i,x,n}-\bar{Z}_s^{i,x,p})dB_s\bigg|\biggr].
		\end{aligned}
		\end{equation}
		Then, by applying the Burkholder-Davis-Gundy's inequality, we get
		\begin{equation}\label{barYnp}
		\underset{n,\, p \to +\infty}{\lim}\E\biggl[\underset{t\geq0}{\sup}\,e^{-r t}|\bar{Y}_t^{i,x,n}-\bar{Y}_t^{i,x,p}|^2\biggr]=0,
		\end{equation}
		which means that $(\bar{Y}^{i,x,n})_{n\geq1}$ is a Cauchy sequence in $\S_r^{2}$. Consequently $\bar{Y}_t^{i,x}$ is continuous.\\
		From the definition of $\bar{K}^{i,x,n}$ and the penalized BSDE $\eqref{pbsde}$, we have
		\begin{equation}\label{barKnp}
		\underset{n,\, p \to +\infty}{\lim}\E\biggl[\underset{t\geq0}{\sup}\,|\bar{K}_t^{i,x,n}-\bar{K}_t^{i,x,p}|^2\biggr]=0.
		\end{equation}
		Then, $(\bar{K}^{i,x,n})_{n\geq1}$ converges to $\bar{K}^{i,x}$ in $\S_r^{2}$.
		So $K^{i}$ is an increasing process, moreover it is continuous, then $K^{i}\in\K_r^{2}$.\\
		Finally, the uniform convergences $\eqref{barYnp}$ and $\eqref{barKnp}$ imply that, in view of Helly's Convergence
		Theorem (see $\cite{KF}$, pp. $370$),
		\be\nn
		\int_0^{+\infty}e^{-r s}(\bar{Y}_s^{i,x}-\phi(X^x_s)))\, d\bar{K}^{i,x}_s=0, \qquad \forall  i\in \I.
		\ee
		
		On the other hand we know that by Theorem $5.2.$ in $\cite{P99}$ that there exists deterministic continuous function $\bar{v}^{i,n}$ of polynomial growth, such that:
		\be\label{barv_bary}
		\forall i\in\I,\; \forall x \in\R^k,\; \bar{Y}_0^{i,x,n}=\bar{v}^{i,n}(x).
		\ee
		which is moreover solution in viscosity sense of the following parabolic PDE:
		\be
		r\bar{v}^{i,n}(x)-\cL\bar{v}^{i,n}(x)-\bar{f}^n_i(x,\bar{v}^{i,n}(x),\sigma^\top(x).D_x\bar{v}^{i,n}(x))=0.
		\ee
		Since the sequence $(\bar{Y}^{i,x,n})_{i\in \I}$ is increasing and $\eqref{barv_bary}$ imply that $\bar{v}^{i,n}\le \bar{v}^{i,n+1} $.\\
		Next for any $x\in\R^k$, let us set
		\be\nn
		\bar{v}^{i}(x)=\underset{n\to+\infty}{\lim}\bar{v}^{i,n}(x).
		\ee
		Then, it holds that for any $i\in\I$
		\be
		\forall x \in\R^k,\; \bar{Y}_0^{i,x}=\bar{v}^{i}(x).
		\ee
		\begin{lemma}\label{lemmaA}
			For $i\in\I$, the function $\bar{v}^{i}$ is continuous in $\R^k$.
		\end{lemma}
		\begin{proof}
			It suffices to show that whenever, $x_n\to x,\quad\E\big[\underset{t\geq0}{\sup}|\bar{Y}^{i,x_n}_t-\bar{Y}^{i,x}_t|^2\big]\to 0.$\\
			
			Now let us use It\^o's formula with $e^{-\lambda t}|\bar{Y}^{i,x_n}_t-\bar{Y}^{i,x}_t|^2$ and $\eqref{dbaryin}$. It yields, for any $\lambda\geq r$, that
			\begin{equation}\label{yxnyxbar}
			\begin{aligned}
			&e^{-\lambda t}|\bar{Y}_t^{i,x_n}-\bar{Y}^{i,x}_t|^2+\int_{t}^{+\infty}e^{-\lambda s}\{(2r-\lambda) |\bar{Y}_s^{i,x_n}-\bar{Y}^{i,x}_s|^2+|\bar{Z}_s^{i,x_n}-\bar{Z}_s^{i,x}|^2\}ds\\ &= 2\int_t^{+\infty}e^{-\lambda s}(\bar{Y}_s^{i,x_n}-\bar{Y}^{i,x}_s) (\bar{f}_i(X^{x_n}_s,\bar{Y}^{i,x_n}_s,\bar{Z}^{i,x_n}_s)
			-\bar{f}_i(X^x_s,\bar{Y}^{i,x}_s,\bar{Z}_s^{i,x}))\, ds \\ &\quad +2\int_{t}^{+\infty}e^{(r-\lambda) s}(\bar{Y}_s^{i,x_n}-\bar{Y}^{i,x}_s)(d\bar{K}_s^{i,x_n}-d\bar{K}_s^{i,x})\\&\quad-2\int_{t}^{+\infty}e^{-\lambda s}(\bar{Y}_s^{i,x_n}-\bar{Y}^{i,x}_s)(\bar{Z}^{i,x_n}_s-\bar{Z}_s^{i,x})\, dB_s.
			\end{aligned}
			\end{equation}
			Taking expectation in both sides of the last equality yields
			\be\nn\ba
			\E[& e^{-\lambda t}|\bar{Y}_t^{i,x_n}-\bar{Y}^{i,x}_t|^2]+\E\biggl[\int_{t}^{+\infty}e^{-\lambda s}\{(2r-\lambda) |\bar{Y}_s^{i,x_n}-\bar{Y}^{i,x}_s|^2+|\bar{Z}_s^{i,x_n}-\bar{Z}_s^{i,x}|^2\}ds\biggr]\\ & \leq 2\E\biggl[\int_t^{+\infty}e^{-\lambda s}|\bar{Y}_s^{i,x_n}-\bar{Y}^{i,x}_s ||\bar{f}_i(X^{x_n}_s,\bar{Y}^{i,x_n}_s,\bar{Z}^{i,x_n}_s)
			-\bar{f}_i(X^{x}_s,\bar{Y}^{i,x}_s,\bar{Z}^{i,x}_s)|\, ds\biggr]\\ & \quad +2\E\biggl[\int_{t}^{+\infty}e^{(r-\lambda) s}(\phi(X^{x_n}_s-\phi(X^{x}_s))d(\bar{K}_s^{i,x_n}-\bar{K}_s^{i,x})\biggr],
			\\ & \leq \E\biggl[\int_t^{+\infty}e^{-\lambda s}|\bar{f}_i(X^{x_n}_s,\bar{Y}^{i,x}_s,\bar{Z}^{i,x}_s)
			-\bar{f}_i(X^{x}_s,\bar{Y}^{i,x}_s,\bar{Z}^{i,x}_s)|^2\, ds\biggr]\\ &\quad + \E\biggl[\int_t^{+\infty}e^{-\lambda s}\big\{(2C^2+C+1)|\bar{Y}_s^{i,x_n}-\bar{Y}^{i,x}_s |^2+\frac{1}{2}|\bar{Z}^{i,x_n}_s-\bar{Z}^{i,x}_s|^2\, \big\}ds\biggr]\\ & \quad +2\E\biggl[\sup_{t\geq0}e^{(r-\lambda) t}\big|\phi(X^{x_n}_t)-\phi(X^{x}_t)\big|(\bar{K}_{+\infty}^{i,x_n}+\bar{K}_{+\infty}^{i,x})\biggr].
			\ea\ee
			Arguments already used in the proof of $\eqref{estim}$  lead to
			\be\label{ybarxn}\ba
			\E\big[\underset{t\geq0}{\sup}e^{-\lambda t}|\bar{Y}^{i,x_n}_t-&\bar{Y}^{i,x}_t|^2\big] \leq\; \E\biggl[\int_t^{+\infty}e^{-\lambda s}|\bar{f}_i(X^{x_n}_s,\bar{Y}^{i,x}_s,\bar{Z}^{i,x}_s)
			-\bar{f}_i(X^{x}_s,\bar{Y}^{i,x}_s,\bar{Z}^{i,x}_s)|^2\, ds\biggr]\\ & \; +2\E\biggl[\sup_{t\geq0}e^{-p(\lambda-r) t}|\phi(X^{x_n}_t)-\phi(X^{x}_t)|^p\biggr]^{\frac{1}{p}}\E\biggl[(\bar{K}_{+\infty}^{i,x_n}+\bar{K}_{+\infty}^{i,x})^{\frac{p}{p-1}}\biggr]^{\frac{p-1}{p}},
			\ea\ee
			with $p\in ]1,2[$. In the right-hand side of $\eqref{ybarxn}$ the first term converges to 0 as $x_n\to x$ since $\bar{f}_i$ is continuous and of polynomial growth. Next let us show that,
			$$\E\biggl[\sup_{t\geq0}e^{-p(\lambda-r) t}|\phi(X^{x_n}_t)-\phi(X^{x}_t)|^p\biggr]^{\frac{1}{p}}\to 0 \mbox{  as  } x_n\to x.$$
			For any $T\geq0$, there exists $C_p$ (a constant depending only on $p$) such that,
			\be\nn\ba
			\E\biggl[\sup_{t\geq0}e^{-p(\lambda-r) t}|\phi(X^{x_n}_t)-\phi(X^{x}_t)|^p\biggr]&\leq  \; \E\biggl[\sup_{t\leq T}e^{-p(\lambda-r) t}|\phi(X^{x_n}_t)-\phi(X^{x}_t)|^p\biggr]\\ & \quad +\E\biggl[\sup_{t\geq T}e^{-p(\lambda-r) t}|\phi(X^{x_n}_t)-\phi(X^{x}_t)|^p\biggr],\\ & \leq  \; \E\biggl[\sup_{t\leq T}e^{-p(\lambda-r) t}|\phi(X^{x_n}_t)-\phi(X^{x}_t)|^p\biggr] + C_pe^{-\frac{p}{2}(\lambda-r) T};
			\ea\ee
			since $\phi$ belongs to $\S_r^2$.  Now for any $\rho>0$ we have:
			\be\label{varphixn}\ba
			\E\biggl[\sup_{t\leq T}&\;e^{-p(\lambda-r) t}|\phi(X^{x_n}_t)-\phi(X^{x}_t)|^p\biggr]\\ & =  \; \E\biggl[\sup_{t\leq T}e^{-p(\lambda-r) t}|\phi(X^{x_n}_t)-\phi(X^{x}_t)|^p1_{[\underset{t \le T}{\sup}|X^{x_n}_t|+\underset{t \le T}{\sup}|X^{x}_t|\leq \rho]}\biggr] \\ & \quad +\E\biggl[\underset{t \le T}{\sup}e^{-p(\lambda-r) t}|\phi(X^{x_n}_t)-\phi(X^{x}_t)|^p1_{[\underset{t \le T}{\sup}|X^{x_n}_t|+\underset{t \le T}{\sup}|X^{x}_t|> \rho]}\biggr].
			\ea\ee
			But, since $\phi$ is continuous, it is uniformly continuous on compact subsets, then there exists $\pi:\R^k\to\R$ increasing with $\pi(0)=0$, such that:$$|\phi(X^{x_n}_t)-\phi(X^{x}_t)|\leq\pi(|X^{x_n}_t-X^{x}_t|),$$ we have
			\be\nn\ba
			\E\biggl[\sup_{t\leq T}&\,e^{-p(\lambda-r) t}|\phi(X^{x_n}_t)-\phi(X^{x}_t)|^p1_{[\underset{t \le T}{\sup}|X^{x_n}_t|+\underset{t \le T}{\sup}|X^{x}_t|\leq \rho]}\biggr] \\ & \leq \E\biggl[\sup_{t\leq T}\pi(|X^{x_n}_t-X^{x}_t|)1_{[\underset{t \le T}{\sup}|X^{x_n}_t|+\underset{t \le T}{\sup}|X^{x}_t|\leq \rho]}\biggr],
			\\ & \leq \E\biggl[\pi(\sup_{t\leq T}|X^{x_n}_t-X^{x}_t|)1_{[\underset{t \le T}{\sup}|X^{x_n}_t|+\underset{t \le T}{\sup}|X^{x}_t|\leq \rho]}\biggr].
			\ea\ee
			Using the continuity proprety $\eqref{estimat3}$, $\pi(0)=0$ and the Lebesgue dominated convergence theorem to obtain that
			\be
			\E\biggl[\sup_{t\leq T} e^{-p(\lambda-r) t}|\phi(X^{x_n}_t)-\phi(X^{x}_t)|^p1_{[\underset{t \le T}{\sup}|X^{x_n}_t|+\underset{t \le T}{\sup}|X^{x}_t|\leq \rho]}\biggr]\to 0 \mbox{  as  } x_n\to x.
			\ee
			The second term of $\eqref{varphixn}$ satisfies:
			\be\nn\ba
			\E\biggl[\sup_{t\leq T}&\,e^{-p(\lambda-r) t}|\phi(X^{x_n}_t)-\phi(X^{x}_t)|^p1_{[\underset{t \le T}{\sup}|X^{x_n}_t|+\underset{t \le T}{\sup}|X^{x}_t|> \rho]}\biggr] \\ & \leq \E\biggl[\sup_{t\leq T}e^{-2(\lambda-r) t}|\phi(X^{x_n}_t)-\phi(X^{x}_t)|^2\biggr]^{\frac{p}{2}}\E\biggl[1_{[\underset{t \le T}{\sup}|X^{x_n}_t|+\underset{t \le T}{\sup}|X^{x}_t|> \rho]}\biggr]^{\frac{2-p}{2}}, \\ & \leq \rho^{-\frac{2-p}{2}}\E\biggl[\sup_{t\leq T}e^{-2(\lambda-r) t}|\phi(X^{x_n}_t)-\phi(X^{x}_t)|^2\biggr]^{\frac{p}{2}}\E\biggl[\underset{t \le T}{\sup}|X^{x_n}_t|+\underset{t \le T}{\sup}|X^{x}_t|\biggr]^{\frac{2-p}{2}}, \\ & \leq \rho^{-\frac{2-p}{2}}C_x,
			\ea\ee
			when $x_n\to x.$  However, from previous results we have,
			\be\nn
			\underset{x_n\to x}{\limsup}\;\E\biggl[\sup_{t\geq0}e^{-p(\lambda-r) t}|\phi(X^{x_n}_t)-\phi(X^{x}_t)|^p\biggr]\leq  \; \rho^{-\frac{2-p}{2}}C_x + C_pe^{-\frac{p}{2}(\lambda-r) T}.
			\ee
			As $\rho$ and $T$ are arbitrary, then making $\rho\to+\infty$ and $T\to+\infty$ to obtain that,
			\be\label{varphix0}
			\underset{x_n\to x}{\lim}\;\E\biggl[\sup_{t\geq0}e^{-p(\lambda-r) t}|\phi(X^{x_n}_t)-\phi(X^{x}_t)|^p\biggr]=0.
			\ee
			The proof of lemma $\ref{lemmaA}$ is now complete.
		\end{proof}
		\par It remains to show that $(\bar{v}^i)_{i\in \I}$ is exist in viscosity sense. For this, From the previous results we have, for each $x\in\R^k$
		\be\nn\bar{v}^{i,n}(x)\nearrow\bar{v}^{i}(x),\quad\mbox{as}\quad n\to+\infty.\ee
		Since $\bar{v}^{i,n}$ and $\bar{v}^{i}$ are continuous, it follows from Dini's theorem that the above convergence is uniform on compacts.\\
		
		We now show that $\bar{v}^{i}$ is a subsolution of $\eqref{PDE_appendix}$ Let $x$ be a point at which $\bar{v}^{i}(x)>\phi(x)$, and let $(q, X) \in J^{2,+} \bar{v}^{i}(x)$. From Lemma $6.1$ in $\cite{CIL}$, there exist sequences:
		$$
		n_{j} \rightarrow+\infty, \quad x_{j} \rightarrow x, \quad\left(q_{j}, X_{j}\right) \in J^{2,+} \bar{v}^{i}_{n_{j}}\left(x_{j}\right)
		$$
		such that
		$$
		\left(q_{j}, X_{j}\right) \rightarrow(q, X).
		$$
		But for any $j$,
		\be\nn
		 r\bar{v}^{i,n_j}(x_j)-b^\top(x_j).q_j-\frac{1}{2}\operatorname{Tr}[(\sigma\sigma^\top)(x_j).X_j]-\bar{f}^{n_j}_i(x_j,\bar{v}^{i,n_j}(x_j),\sigma^\top(x_j).q_j)\leq0.
		\ee
		Then,
		\be\nn\ba
		 r\bar{v}^{i,n_j}(x_j)-b^\top(x_j).q_j&-\frac{1}{2}\operatorname{Tr}[(\sigma\sigma^\top)(x_j).X_j]\\&-\bar{f}_i(x_j,\bar{v}^{i,n_j}(x_j),\sigma^\top(x_j).q_j)-n_{j}\big(\bar{v}^{i,n_j}(x_j)-\phi(x_{j})\big)^{-} \leq 0.
		\ea\ee
		From the assumption that $\bar{v}^{i}(x)>\phi(x)$ and the uniform convergence of $\bar{v}^{i,n}$, it follows that for $j$ large enough $\bar{v}^{i,n_j}(x_{j})>\phi(x_{j})$. Hence, taking the limit as $j \rightarrow+\infty$ in the above inequality yields:
		\be\nn
		r\bar{v}^{i}(x)-b^\top(x).q-\frac{1}{2}\operatorname{Tr}[(\sigma\sigma^\top)(x).X]-\bar{f}_i(x,\bar{v}^{i}(x),\sigma^\top(x).q)\leq0.
		\ee
		Then $\bar{v}^{i}$ is a subsolution of $\eqref{PDE_appendix}$.
		\par We now show that $\bar{v}^{i}$ is a supersolution of $\eqref{PDE_appendix}$. Let $x$ be arbitrary in $\mathbb{R}^{k}$, and $(q, X) \in J^{2,-} \bar{v}^{i}(x) .$ We already know that $\bar{v}^{i}(x) \geq \phi(x) .$ By the same argument as above, there exist sequences:
		$$
		n_{j} \rightarrow+\infty, \quad x_{j} \rightarrow x, \quad\left(q_{j}, X_{j}\right) \in J^{2,-} \bar{v}^{i}_{n_{j}}\left(x_{j}\right)
		$$
		such that
		$$
		\left(q_{j}, X_{j}\right) \rightarrow(q, X).
		$$
		But for any $j$,
		\be\nn
		 r\bar{v}^{i,n_j}(x_j)-b^\top(x_j).q_j-\frac{1}{2}\operatorname{Tr}[(\sigma\sigma^\top)(x_j).X_j]-\bar{f}^{n_j}_i(x_j,\bar{v}^{i,n_j}(x_j),\sigma^\top(x_j).q_j)\geq0.
		\ee
		Then,
		\be\nn\ba
		 r\bar{v}^{i,n_j}(x_j)-b^\top(x_j).q_j&-\frac{1}{2}\operatorname{Tr}[(\sigma\sigma^\top)(x_j).X_j]\\&-\bar{f}_i(x_j,\bar{v}^{i,n_j}(x_j),\sigma^\top(x_j).q_j)-n_{j}\big(\bar{v}^{i,n_j}(x_j)-\phi(x_{j})\big)^{-} \geq 0.
		\ea\ee
		Hence, taking the limit as $j \rightarrow+\infty$ in the above inequality yields:
		\be\nn
		r\bar{v}^{i}(x)-b^\top(x).q-\frac{1}{2}\operatorname{Tr}[(\sigma\sigma^\top)(x).X]-\bar{f}_i(x,\bar{v}^{i}(x),\sigma^\top(x).q)\geq0.
		\ee
		Then $\bar{v}^{i}$ is a supersolution of $\eqref{PDE_appendix}$.
	\end{proof}
	\section{Comparison Theorem of Infinite Horizon RBSDE}\label{2nd-extra-result}
	In this section we discuss the comparison theorem of the following infinite horizon RBSDE
	\begin{equation}\label{standard_RBSDE}
	\begin{cases}
	Y\in\S_0^2,Z\in\M_0^2 \mbox{ and } K\in\K_0^2;\\
	Y_t = \xi + \int_{t}^{+\infty} f(s,X_s,Y_s,Z_s)ds+ K_\infty-K_t -  \int_{t}^{+\infty}Z_sdB_s;
	\\
	\forall\; t\geq0, \quad Y_t \geq S_t,\\
	\mbox{ and }
	\int_{0}^{+\infty} (Y_s-S_t)dK_s = 0,
	\end{cases}
	\end{equation}
	where $\xi\in\L^2:= \{\xi,\; \xi \mbox{ is a }\F_{\infty}\mbox{-measurable random variable } \xi \mbox{ s.t. } \E[|\xi|^2]<+\infty\}$ and $f$ is a map from $[0,+\infty)\times\R^k\times\R\times\R^d\mapsto \R$  which satisfies to:\\
	\bf{[A1]}: for any $x\in\R^k$ and $t\geq0$, $f(t,x,0,0)$ belongs to $\mathcal{M}_0^2$.\\
	\bf{[A2]}: There exists a positive constant $C$, such that $\forall\left(x,y_{i}, z_{i}\right) \in\R^k\times\R\times\R^d$, $i=1,2$,
	$$
	\left|f(t,x, y_{1}, z_{1})-f(t,x, y_{2}, z_{2})\right| \leq u(t)(\left|y_{1}-y_{2}\right|+\left|z_{1}-z_{2}\right|), \quad \forall t\geq0;$$
	$u(t):[0,+\infty)\mapsto[0,+\infty)$ is a deterministic and bounded function satisfies: $\int_0^{+\infty}u(t)dt<+\infty$ and $\int_0^{+\infty}u^2(t)dt<+\infty$.
	
	Let $\{X_t; t\geq 0\}$ be the solution of the following
	standard SDE:
	\begin{equation}\label{AppB_sde}
	dX_t=b(t,X_t)dt+\sigma(t,X_t)dB_t; \quad X_0=x,\, x\in\R^k, t\geq 0,
	\end{equation}
	where the functions $b$ and $\sigma$ are the ones of
	$\bf{[H1]}$.\\
	We consider a barrier $\{S_t ,t\geq0\},$ which is a continuous progressively measurable real-valued process satisfying\\
	\bf{[A3]}: $\E[\sup\limits_{t\geq0} (S_t^+)^2]<\infty$ and $\underset{t\to +\infty}{\limsup}\; S_t \leq \xi,$ a.s.\\
	
	Then we have the
	\begin{theorem}
		(see e.g. $\cite{HLW}$): Assume $\xi\in\L^2$, \bf{[A1]}, \bf{[A2]} and \bf{[A3]}, then the RBSDE $\eqref{standard_RBSDE}$ associated with $(f,\xi,S)$ has a unique solution $(Y,Z,K).$
	\end{theorem}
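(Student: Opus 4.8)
The plan is to prove uniqueness by an energy estimate based on It\^o's formula, and existence by a penalization scheme, transporting the finite-horizon reflected BSDE machinery to the infinite-horizon setting with the help of the integrability assumptions on $u(\cdot)$ in \textbf{[A2]}, which here play the role usually performed by a finite terminal time or by a strictly positive discount factor. An alternative would be to approximate by finite-horizon RBSDEs on $[0,T]$ and let $T\to+\infty$, but the penalization route is the one that mirrors the proof of Theorem~\ref{theorem_appendix}.

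\textbf{Uniqueness.} Given two solutions $(Y,Z,K)$ and $(Y',Z',K')$, I would apply It\^o's formula to $|Y_t-Y'_t|^2$ on $[t,T]$, let $T\to+\infty$ using that both $Y$ and $Y'$ tend to $\xi$, and obtain
\[
|Y_t-Y'_t|^2+\int_t^{+\infty}|Z_s-Z'_s|^2\,ds=2\int_t^{+\infty}(Y_s-Y'_s)\big(f(s,X_s,Y_s,Z_s)-f(s,X_s,Y'_s,Z'_s)\big)\,ds
\]
\[
\qquad+2\int_t^{+\infty}(Y_s-Y'_s)(dK_s-dK'_s)-2\int_t^{+\infty}(Y_s-Y'_s)(Z_s-Z'_s)\,dB_s.
\]
The reflection term is non-positive: expanding $(Y_s-Y'_s)(dK_s-dK'_s)$, the contributions $(Y_s-S_s)\,dK_s$ and $(Y'_s-S_s)\,dK'_s$ vanish by the Skorokhod condition, while $-(Y'_s-S_s)\,dK_s\le0$ and $-(Y_s-S_s)\,dK'_s\le0$ since $Y,Y'\ge S$ and $K,K'$ are non-decreasing. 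The driver term is bounded via \textbf{[A2]} and Young's inequality by $(2u(s)+u(s)^2)|Y_s-Y'_s|^2+|Z_s-Z'_s|^2$. Taking expectations (the stochastic integral being a genuine martingale since $Y,Y'\in\S_0^2$ and $Z,Z'\in\M_0^2$) absorbs the $|Z-Z'|^2$ term and leaves $\E[|Y_t-Y'_t|^2]\le\int_t^{+\infty}(2u(s)+u(s)^2)\,\E[|Y_s-Y'_s|^2]\,ds$. Since $\int_0^{+\infty}(2u+u^2)<+\infty$ by \textbf{[A2]}, a backward Gronwall argument forces $\E[|Y_t-Y'_t|^2]=0$ for all $t$; hence $Y=Y'$, and then $Z=Z'$ and $K=K'$ by substitution.

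\textbf{Existence.} I would introduce the penalized BSDEs
\[
Y^n_t=\xi+\int_t^{+\infty}\big[f(s,X_s,Y^n_s,Z^n_s)+n(Y^n_s-S_s)^-\big]\,ds-\int_t^{+\infty}Z^n_s\,dB_s,\qquad n\ge1,
\]
which admit unique solutions $(Y^n,Z^n)\in\S_0^2\times\M_0^2$ by the infinite-horizon BSDE theory (following Chen \cite{chen}, under \textbf{[A1]}--\textbf{[A3]}), and set $K^n_t:=n\int_0^t(Y^n_s-S_s)^-\,ds$. By comparison for infinite-horizon BSDEs the sequence $(Y^n)_n$ is non-decreasing. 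The crucial step is the uniform a priori bound
\[
\E\Big[\sup_{t\ge0}|Y^n_t|^2+\int_0^{+\infty}|Z^n_s|^2\,ds+(K^n_\infty)^2\Big]\le C
\]
with $C$ independent of $n$: an upper bound for $Y^n$ comes by comparison with the non-reflected BSDE driven by $f$, a lower bound from $S$ and $\xi$, and the control of $K^n_\infty$ from applying It\^o to $|Y^n_t|^2$ and rearranging, again exploiting $\int u<+\infty$ and $\int u^2<+\infty$. Then I would pass to the limit exactly as in the proof of Theorem~\ref{theorem_appendix}: $Y^n\uparrow Y$ with $Y\in\S_0^2$ by Fatou, $(Z^n)$ is Cauchy in $\M_0^2$ and $(K^n)$ Cauchy in $\S_0^2$, the limit $(Y,Z,K)$ solves the equation with $Y\ge S$, and the minimality condition $\int_0^{+\infty}(Y_s-S_s)\,dK_s=0$ follows from Helly's convergence theorem combined with $\E\big[\int_0^{+\infty}(Y^n_s-S_s)^-\,dK^n_s\big]\to0$.

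I expect the uniform estimate on $K^n_\infty$ --- and more broadly, making the penalization converge in the absence of a finite horizon or a discount factor --- to be the main obstacle; this is precisely where the two integrability hypotheses on $u$ in \textbf{[A2]} together with \textbf{[A3]} on $S$ and $\xi$ are indispensable, and one must check carefully that all the stochastic integrals over the whole half-line are true martingales and not merely local ones.
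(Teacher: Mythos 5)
The paper offers no proof of this statement: it is imported verbatim from \cite{HLW}, so there is no internal argument to compare yours against. Taken on its own terms, your uniqueness argument is correct and complete: the sign of the reflection term follows from the two Skorokhod conditions exactly as you expand it, Young's inequality absorbs $|Z-Z'|^2$, and the backward Gronwall iteration closes because $\int_0^{+\infty}(2u+u^2)\,dt<+\infty$ while $\E[|Y_t-Y'_t|^2]$ stays bounded. Your existence scheme is also the natural one, and is essentially the penalization that the paper itself runs in Appendix A for the discounted analogue \eqref{pbsde}.

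There is, however, one genuine gap at the very first step of the existence half. You solve the penalized equation ``following Chen \cite{chen}'', but Chen's theorem --- like hypothesis \textbf{[A2]} --- requires the Lipschitz coefficient in $y$ to be integrable over $[0,+\infty)$, and the penalized driver $f(s,x,y,z)+n(y-S_s)^-$ has Lipschitz coefficient $u(s)+n$ in $y$, whose integral over the half-line is infinite for every $n\ge1$. (In Appendix A this is rescued by the discount factor $e^{-rt}$ with $r$ large; here there is no discounting, which is precisely why \textbf{[A2]} imposes integrability of $u$.) The standard repair is to observe that $y\mapsto n(y-S_s)^-$ is non-increasing, so the penalized driver is only one-sidedly Lipschitz (monotone) in $y$ with coefficient $u(s)$, which is integrable; existence then follows from infinite-horizon results for monotone drivers (e.g. \cite{P99}) or from a truncation to $[0,T]$ followed by $T\to+\infty$. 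The same caveat applies to the comparison theorem you invoke to get $Y^n\le Y^{n+1}$: it must be the version valid under the one-sided condition, not the globally Lipschitz one. Once the penalized equations are legitimately solved and compared, the remainder of your passage to the limit (uniform bound on $K^n_\infty$, Cauchy property of $(Z^n)$ and $(K^n)$, Helly's theorem for the Skorokhod condition, with \textbf{[A3]} ensuring the obstacle is asymptotically dominated by $\xi$) goes through as you describe.
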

	We next prove a comparison theorem
	\begin{theorem}\label{comparison_theo}
		Let $(\xi, f, S)$ and $\left(\xi^{\prime}, f^{\prime}, S^{\prime}\right)$ be two sets of data, each one satisfying all the assumptions $\xi,\xi^{\prime}\in\L^2$, \bf{[A1]}, \bf{[A2]} and \bf{[A3]} (with the exception that the assumption \bf{[A3]} could be satisfied by either $f$ or $f^{\prime}$ only), and suppose in addition the following:
		\begin{itemize}
			\item [$(i)$] $\xi \leq \xi^{\prime}$,
			\item [$(ii)$] $f(t,x, y, z) \leq f^{\prime}(t,x, y, z)$, $\forall(t,x,y, z) \in [0,+\infty)\times\R^k\times\mathbb{R} \times \mathbb{R}^{d}$,
			\item [$(iii)$] $S_{t} \leq S_{t}^{\prime}, \forall t\geq0$.
		\end{itemize}
		Let $(Y, Z, K)$ be a solution of the $R B S D E$ with $\operatorname{data}\; (\xi, f, S)$ and $\left(Y^{\prime}, Z^{\prime}, K^{\prime}\right)$ a solution of the RBSDE with data $\left(\xi^{\prime}, f^{\prime}, S^{\prime}\right)$. Then
		$Y_{t} \leq Y_{t}^{\prime}, \quad \forall t\geq0 \quad$ a.s.
	\end{theorem}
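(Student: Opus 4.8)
The plan is to prove that $(Y_t-Y_t')^+=0$ for every $t\ge 0$, a.s., by an It\^o--Tanaka computation on the square of the positive part of $\hat Y:=Y-Y'$, using the Skorokhod conditions to kill the reflection terms and the global integrability of $u$ to close a Gronwall estimate up to the horizon $+\infty$. Put $\hat Z:=Z-Z'$, $\hat K:=K-K'$ and $\Delta f(s):=f(s,X_s,Y_s,Z_s)-f'(s,X_s,Y_s',Z_s')$, so that on any $[t,T]$
\begin{equation*}
\hat Y_t=\hat Y_T+\int_t^T\Delta f(s)\,ds+(K_T-K_t)-(K_T'-K_t')-\int_t^T\hat Z_s\,dB_s .
\end{equation*}

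First I would apply the It\^o--Tanaka formula to $s\mapsto\big((\hat Y_s)^+\big)^2$ on $[t,T]$; since $x\mapsto (x^+)^2$ is $C^1$ with absolutely continuous derivative, no local-time term survives and, after rearranging,
\begin{equation*}
\big((\hat Y_t)^+\big)^2+\int_t^T\mathbf 1_{\{\hat Y_s>0\}}|\hat Z_s|^2\,ds=\big((\hat Y_T)^+\big)^2+2\int_t^T(\hat Y_s)^+\Delta f(s)\,ds+2\int_t^T(\hat Y_s)^+\,d(K_s-K_s')-2\int_t^T(\hat Y_s)^+\hat Z_s\,dB_s .
\end{equation*}
The reflection term is $\le 0$: on $\{\hat Y_s>0\}$ one has $Y_s>Y_s'\ge S_s'\ge S_s$ by $(iii)$, so the Skorokhod condition $\int(Y_s-S_s)\,dK_s=0$ forces $dK_s$ to charge nothing there, whence $\int_t^T(\hat Y_s)^+\,dK_s=0$, while $\int_t^T(\hat Y_s)^+\,dK_s'\ge 0$ since $K'$ is nondecreasing. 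For the generator I would split $\Delta f(s)=\big(f(s,X_s,Y_s,Z_s)-f(s,X_s,Y_s',Z_s')\big)+\big(f(s,X_s,Y_s',Z_s')-f'(s,X_s,Y_s',Z_s')\big)$; the second bracket is $\le 0$ by $(ii)$ hence contributes a nonpositive amount after multiplication by $(\hat Y_s)^+\ge 0$, and the first is bounded in modulus by $u(s)\big(|\hat Y_s|+|\hat Z_s|\big)$ by $\textbf{[A2]}$, so that, using $2(\hat Y_s)^+|\hat Z_s|u(s)\le 2u^2(s)\big((\hat Y_s)^+\big)^2+\tfrac{1}{2}\mathbf 1_{\{\hat Y_s>0\}}|\hat Z_s|^2$, the $|\hat Z|^2$ part is absorbed into the left-hand side. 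The stochastic integral is a true martingale (because $\hat Y\in\S_0^2$ and $\hat Z\in\M_0^2$, via Burkholder--Davis--Gundy and Cauchy--Schwarz), so taking expectations gives
\begin{equation*}
\E\big[\big((\hat Y_t)^+\big)^2\big]\le\E\big[\big((\hat Y_T)^+\big)^2\big]+\int_t^T\big(2u(s)+2u^2(s)\big)\,\E\big[\big((\hat Y_s)^+\big)^2\big]\,ds .
\end{equation*}

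The decisive step is letting $T\to+\infty$. Since $Y,Y'\in\S_0^2$ solve infinite-horizon RBSDEs, the well-posedness framework gives $Y_T\to\xi$ and $Y_T'\to\xi'$ a.s.\ as $T\to+\infty$ (the finiteness of $K_\infty,K_\infty'$ and the integrability in $\textbf{[A1]}$--$\textbf{[A2]}$ make the remaining terms vanish), so $\hat Y_T\to\xi-\xi'\le 0$ by $(i)$; together with $\E[\sup_{s\ge0}((\hat Y_s)^+)^2]<+\infty$ this gives $\E[((\hat Y_T)^+)^2]\to 0$ by dominated convergence. Writing $\varphi(t):=\E[((\hat Y_t)^+)^2]$, which is bounded, and $g:=2u+2u^2\ge0$ with $\int_0^{+\infty}g(s)\,ds<+\infty$ by $\textbf{[A2]}$, we obtain $\varphi(t)\le\int_t^{+\infty}g(s)\varphi(s)\,ds$. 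A backward Gronwall argument finishes: $h(t):=\int_t^{+\infty}g(s)\varphi(s)\,ds$ satisfies $h'(t)=-g(t)\varphi(t)\ge-g(t)h(t)$, so $t\mapsto h(t)\exp\!\big(\int_0^t g\big)$ is nondecreasing; since $\int_0^{+\infty}g<+\infty$ it tends to $0$ at $+\infty$, hence is $\le 0$, forcing $h\equiv0$ and therefore $\varphi\equiv0$. By continuity of $Y$ and $Y'$ this yields $(\hat Y_t)^+=0$ for all $t\ge0$ a.s., i.e.\ $Y_t\le Y_t'$.

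The main obstacle is exactly this passage to the horizon: with discount $r=0$ there is no exponential weight to absorb the Lipschitz coefficient, so one has to lean on the global integrability $\int_0^{+\infty}\big(u(s)+u^2(s)\big)\,ds<+\infty$ and on the asymptotic pinning $Y_T\to\xi$, $Y_T'\to\xi'$ — which is where the square-integrability hypothesis on $u$ in $\textbf{[A2]}$ and the one-sided $\limsup$ condition in $\textbf{[A3]}$ are used. Establishing $\E[((\hat Y_T)^+)^2]\to 0$ rigorously (equivalently, that a solution in $\S_0^2$ is asymptotically attached to its terminal data) is the one point that genuinely uses the infinite-horizon structure rather than a verbatim transcription of the classical finite-horizon comparison theorem.
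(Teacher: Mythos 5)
Your proof is correct, and its skeleton coincides with the paper's: both apply the It\^o--Tanaka formula to $\big((Y_t-Y'_t)^+\big)^2$, both kill the reflection term by observing that on $\{Y_s>Y'_s\}$ one has $Y_s>Y'_s\ge S'_s\ge S_s$ so the Skorokhod condition annihilates $dK_s$ while $-\int (Y_s-Y'_s)^+\,dK'_s\le 0$, and both finish with a Gronwall argument. Where you genuinely diverge is in the treatment of the linearized generator terms. The paper introduces the stochastic exponential $\Gamma_{t,s}=\exp\{\int_t^s(\alpha_r-\tfrac12\beta_r^2)dr+\int_t^s\beta_r\,dB_r\}$ of the linearization coefficients $\alpha,\beta$, integrates by parts against it, and ends with $\E[(\bar Y_t^+)^2]\le\E[\int_t^{+\infty}u(r)\Gamma_{t,r}(\bar Y_r^+)^2dr]$, to which it applies ``Gronwall'' with the random weight $\Gamma_{t,r}$ still inside the expectation --- a step that is left rather implicit (one needs moment bounds on $\Gamma$ to make it a genuine deterministic Gronwall inequality). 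You instead absorb the $|\hat Z|^2$ contribution by Young's inequality into the left-hand side, work on a finite window $[t,T]$, send $T\to+\infty$ using the asymptotic pinning $Y_T\to\xi$, $Y'_T\to\xi'$ together with $(i)$ and dominated convergence, and close with a fully deterministic backward Gronwall exploiting $\int_0^{+\infty}(u+u^2)\,ds<+\infty$. Your version buys a cleaner and more self-contained endgame: it makes explicit exactly where the infinite-horizon hypotheses (square-integrability of $u$, convergence to the terminal datum) are used, and it avoids any integrability issues for the exponential weight; the paper's version is shorter but hides both the passage $s\to+\infty$ and the justification of the final Gronwall step. One cosmetic remark: like the paper, you tacitly assume the Lipschitz condition \textbf{[A2]} holds for $f$; if it holds only for $f'$ the same argument goes through after splitting $\Delta f$ around $f'(s,X_s,Y_s,Z_s)$ instead.
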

	\begin{proof}
		Let $\bar{Y}_t = Y_t-Y^{\prime}_t$, $\bar{Z}_t = Z_t-Z^{\prime}_t$ and $\bar{K}_t = K_t-K^{\prime}_t$, then $(\bar{Y},\bar{Z},\bar{K})$ satisfies
		\be
		\bar{Y}_t = \xi-\xi^{\prime} + \int_{t}^{+\infty}(\alpha_s\bar{Y}_s+\beta_s\bar{Z}_s+\gamma_s)ds+ (K_\infty-K_t) -(K^{\prime}_\infty-K^{\prime}_t)-  \int_{t}^{+\infty}\bar{Z}_sdB_s;
		\ee
		where
		\be\nn
		\alpha_s = \left\{\ba &\frac{f(s,X_s,Y_s,Z_s)-f(s,X_s,Y^{\prime}_s,Z_s)}{\bar{Y}_s}\quad if\quad \bar{Y}_s\ne0, \\
		& 0 \qquad\qquad\qquad\qquad\qquad\qquad otherwise.
		\ea\right.
		\qquad ,\ee
		\be\nn
		\beta_s = \left\{\ba &\frac{f(X_s,Y^{\prime}_s,Z_s)-f(X_s,Y^{\prime}_s,Z^{\prime}_s)}{\bar{Z}_s}\quad if\quad \bar{Z}_s\ne0, \\
		& 0 \qquad\qquad\qquad\qquad\qquad\qquad otherwise.
		\ea\right.
		\ee
		and $$\gamma_s = f(s,X_s,Y^{\prime}_s,Z^{\prime}_s)-f^{\prime}(s,X_s,Y^{\prime}_s,Z^{\prime}_s).$$
		Applying It\^o-Tanaka's formula to $(\bar{Y}^+_t)^2$, we have:
		\be
		d(\bar{Y}^+_t)^2=1_{\{\bar{Y}_t>0\}}[-2\bar{Y}^+_t(\alpha_t\bar{Y}_t+\beta_t\bar{Z}_t+\gamma_t)dt-2\bar{Y}^+_t d\bar{K}_t +2\bar{Y}^+_t\bar{Z}_t\,dB_t+\bar{Z}^2_t\,dt].
		\ee
		Next, let $$\Gamma_{t,s}:= \exp\big\{\int_{t}^{s}(\alpha_r-\frac{1}{2}\beta^2_r)dr+\int_{t}^{s}\beta_r\,dB_r\big\}, \qquad 0\leq t\leq s, $$
		be the solution of the following SDE
		\be\nn
		\begin{cases}
			d\Gamma_{t,s}=\alpha_s\Gamma_{t,s}\,ds+\beta_s\Gamma_{t,s}\,dB_s;
			\\
			\Gamma_{t,t} = 1 .
		\end{cases}
		\ee
		By Integration by Parts formula, we obtain
		\be\nn
		\ba
		(\bar{Y}^+_t)^2=\;&\Gamma_{t,s}(\bar{Y}^+_s)^2+2\int_{t}^{s}1_{\{\bar{Y}_r>0\}}\Gamma_{t,r}\bar{Y}^+_r(\alpha_r\bar{Y}_r+\beta_r\bar{Z}_r+\gamma_r)dr  +2\int_{t}^{s}1_{\{\bar{Y}_r>0\}}\Gamma_{t,r}\bar{Y}^+_r\,d\bar{K}_r\\&-2\int_{t}^{s}1_{\{\bar{Y}_r>0\}}\Gamma_{t,r}\bar{Y}^+_r\bar{Z}_r\,dB_r-\int_{t}^{s}1_{\{\bar{Y}_r>0\}}\Gamma_{t,r}\bar{Z}^2_r\,dr-\int_{t}^{s}\alpha_r\Gamma_{t,r}(\bar{Y}^+_r)^2\,dr \\& -\int_{t}^{s}\beta_r\Gamma_{t,r}(\bar{Y}^+_r)^2\,dB_r-2\int_{t}^{s}\beta_r\Gamma_{t,r}\bar{Y}^+_r\bar{Z}_r\,dr.
		\ea
		\ee
		Since $f(s,X_s,Y^{\prime}_s,Z^{\prime}_s)\leq f^{\prime}(s,X_s,Y^{\prime}_s,Z^{\prime}_s)$, $\xi \leq \xi^{\prime}$ and,  on  $\{Y_t > Y^{\prime}_t\}$, $Y_t > S^{\prime}_t$, we have
		$$\int_{t}^{s}1_{\{\bar{Y}_r>0\}}\Gamma_{t,r}\bar{Y}^+_r\,d\bar{K}_r\leq0.$$
		Then, taking the expectation for $s\to+\infty$, we have
		\be\nn
		 \E\big[(\bar{Y}^+_t)^2\big]+E\big[\int_{t}^{+\infty}1_{\{\bar{Y}_r>0\}}\Gamma_{t,r}\bar{Z}^2_r\,dr\big]\leq\;\E\big[\int_{t}^{+\infty}1_{\{\bar{Y}_r>0\}}\alpha_r\Gamma_{t,r}(\bar{Y}^+_r)^2\,dr\big].
		\ee
		Assume now that the Lipschitz condition in the statement applies to $f$. Then
		\be\nn
		\E\big[(\bar{Y}^+_t)^2\big]\leq\;\E\big[\int_{t}^{+\infty}u(r)\Gamma_{t,r}(\bar{Y}^+_r)^2\,dr\big].
		\ee
		From Gronwall's lemma, $(Y_t-Y^{\prime}_t)^+=0$, $\forall t\geq0.$
	\end{proof}
\end{appendices}
%%%%%%%%%%%%%%%%%%%%%%%%%%%%%%%%%%%%%%%%%%%%%%%%%%%%%%%%%%%%%%%%%%%%%%%%%%%%%%%%%
%%%%%%%%%%%%%%%%%%%%%%%%%%%%%%%%%%%%%%%%%%%%%%%%%%%%%%%%%%%%%%%%%%%%%%%%%%%%%%%%%

\end{document}